\documentclass[10pt]{amsart}
\usepackage{amssymb, amsthm, amsmath}
\usepackage[all]{xy}
\usepackage{graphicx}
\usepackage{psfrag}
\usepackage[vcentermath]{youngtab}

\newtheorem{thm}{Theorem}
\newtheorem{lemma}{Lemma}[section]
\newtheorem{prop}[lemma]{Proposition}
\newtheorem{cor}[lemma]{Corollary}
\newtheorem{iprop}{Proposition}
\theoremstyle{definition}

\newtheorem{defn}[lemma]{Definition}
\newtheorem{example}[lemma]{Example}
\newtheorem{remark}[lemma]{Remark}
\newtheorem{claim}{Claim}

\newcommand\A{{\mathbb A}}

\newcommand\C{{\mathbb C}}
\newcommand\Q{{\mathbb Q}}
\newcommand\N{{\mathbb N}}
\newcommand\Eta{H}
\newcommand{\cN}{{\mathcal N}}

\newcommand\IH{{\mathbb H}}
\newcommand{\ti}{\vartheta}
\newcommand{\Ti}{\Theta}

\newcommand\W{{\mathrm W}}

\newcommand\bP{{\mathbb P}}

\newcommand\cC{{\mathcal C}}

\newcommand\X{{\mathrm X}}
\newcommand\Z{{\mathbb Z}}
\newcommand\cP{{\mathcal P}}
\newcommand\cQ{{\mathcal Q}}
\newcommand\cS{{\mathcal S}}

\newcommand\AS{{\mathfrak S}}
\newcommand\DS{{\mathfrak D}}

\newcommand\al{\alpha}

\newcommand\la{\lambda}

\newcommand{\f}{f}
\newcommand{\g}{g}
\newcommand\s{{\sigma}}
\newcommand\ta{{\tau}}

\newcommand\ssm{\smallsetminus}
\newcommand\noin{\noindent}

\newcommand\bull{{\scriptscriptstyle \bullet}}
\newcommand\eqto{\stackrel{\lower1.5pt\hbox{$\scriptstyle\sim\,$}}\to}
\newcommand\ov{\overline}

\newcommand\wh{\widehat}
\newcommand\wt{\widetilde}

\DeclareMathOperator{\SO}{SO}

\DeclareMathOperator{\OG}{OG}

\DeclareMathOperator{\ev}{ev}
\DeclareMathOperator{\HH}{\mathrm{H}}
\DeclareMathOperator{\QH}{\mathrm{QH}}

\DeclareMathOperator{\type}{\mathrm{type}}

\newcommand{\ignore}[1]{}
\newcommand{\pic}[2]{\includegraphics[scale=#1]{#2}}

\psfrag{k}{$k$}
\psfrag{a}{$a$}
\psfrag{b}{$b$}
\psfrag{h}{$h$}
\psfrag{g}{$g$}
\psfrag{x}{$x$}
\psfrag{bh}{$b_h$}
\psfrag{gh}{$g_h$}
\psfrag{bx}{${\mathbf x}$}
\psfrag{(r,c)}{$(r,c)$}
\psfrag{(r',c')}{$(r',c')$}
\psfrag{la1}{$\la^1$}
\psfrag{la2}{$\la^2$}

\begin{document}

\title[A Giambelli formula for even orthogonal Grassmannians]
{A Giambelli formula for even orthogonal Grassmannians}

\date{March 29, 2012}
\author{Anders Skovsted Buch}
\address{Department of Mathematics, Rutgers University, 110
  Frelinghuysen Road, Piscataway, NJ 08854, USA}
\email{asbuch@math.rutgers.edu}

\author{Andrew Kresch}
\address{Institut f\"ur Mathematik,
Universit\"at Z\"urich, Winterthurerstrasse 190,
CH-8057 Z\"urich, Switzerland}
\email{andrew.kresch@math.uzh.ch}

\author{Harry~Tamvakis} \address{Department of Mathematics, University
  of Maryland, 1301 Mathematics Building, College Park, MD 20742, USA}
\email{harryt@math.umd.edu}

\subjclass[2000]{Primary 14N15; Secondary 05E15, 14M15}

\thanks{The authors were supported in part by NSF Grant DMS-0906148
  (Buch), the Swiss National Science Foundation (Kresch), and NSF
  Grant DMS-0901341 (Tamvakis).}

\begin{abstract}
Let $X$ be an orthogonal Grassmannian parametrizing isotropic
subspaces in an even dimensional vector space equipped with a
nondegenerate symmetric form. We prove a Giambelli formula which
expresses an arbitrary Schubert class in the classical and quantum
cohomology ring of $X$ as a polynomial in certain special Schubert
classes. Our analysis reveals a surprising relation between the
Schubert calculus on even and odd orthogonal Grassmannians. We also
study {\em eta polynomials}, a family of polynomials defined using
raising operators whose algebra agrees with the Schubert calculus on
$X$.
\end{abstract}

\maketitle 

\setcounter{section}{-1}

\section{Introduction}
\label{intro}

Consider a complex vector space $V$ of dimension $N$ equipped with a
nondegenerate symmetric form. Choose an integer $m<N/2$ and consider
the Grassmannian $\OG=\OG(m,N)$ parametrizing isotropic
$m$-dimensional subspaces of $V$. Our aim in this paper is to prove a
{\em Giambelli formula} that expresses the Schubert classes on $\OG$
as polynomials in certain special Schubert classes that generate the
cohomology ring $\HH^*(\OG,\Z)$. When $N=2n+1$ is odd, this was the
main result of \cite{BKT2}; what is new here concerns the even case
$N=2n+2$. 

The proof of our main theorem (Theorem \ref{mainthm}) exploits the
weight space decomposition of $\HH^*(\OG(m,2n+2),\Q)$ induced by the
natural involution of the Dynkin diagram of type $\mathrm{D}_{n+1}$.
We require the Giambelli formula for odd orthogonal Grassmannians from
\cite{BKT2} and a similar result for the $(+1)$-eigenspace of
$\HH^*(\OG(m,2n+2),\Q)$, which is the subring generated by the Chern
classes of the tautological vector bundles over $\OG$. These
ingredients combine to establish Theorem \ref{mainthm} thanks to 
a surprising new relation between the cohomology of even and odd
orthogonal Grassmannians (Proposition \ref{prop-1}).

Define nonnegative integers $K$ and $k$ by the equations
\[
K = N-2m = \begin{cases} 2k+1 & \text{if $N$ is odd}, \\
                         2k & \text{if $N$ is even}. 
           \end{cases} 
\]
Observe that $n+k = N-m-1$. An integer partition
$\la=(\la_1,\ldots,\la_\ell)$ is {\em $k$-strict} if no part $\la_i$
greater than $k$ is repeated.  Let $\la$ be a $k$-strict partition
whose Young diagram is contained in an $m \times (n+k)$ rectangle. For
$1 \leq j \leq m$, let
\begin{equation}
\label{peq} 
\ov{p}_j(\la) = N-m + j-\la_j - \#\{i \leq j \mid \la_i+\la_j \geq
K+j-i \text{ and } \la_i > k \} \,,
\end{equation}
and notice that $\ov{p}_j(\la)\neq n+1$ for every $j$ and $\la$.

An isotropic flag is a complete flag $0 = F_0 \subsetneq F_1
\subsetneq \dots \subsetneq F_N = V$ of subspaces of $V$ such that
$F_i = F_j^\perp$ whenever $i+j=N$. For any fixed isotropic flag
$F_\bull$ and any $k$-strict partition $\la$ whose Young diagram is
contained in an $m\times (n+k)$ rectangle, we define a closed
subset $Y_\la=Y_\la(F_\bull) \subset \OG$ by setting
\begin{equation}
\label{Yeq}
Y_\la(F_\bull) = \{ \Sigma \in \OG \mid \dim(\Sigma \cap F_{\ov{p}_j}) 
\geq j \text{ for } 1 \leq j \leq m \} \,.
\end{equation}
If $N$ is odd, the varieties $Y_\la$ are exactly the Schubert
varieties in $\OG$. If $N$ is even, and $k$ is not a part of $\la$,
then $Y_\la$ is again a Schubert variety in $\OG$. Otherwise, $Y_\la$
is a union of two Schubert varieties $X_\la$ and $X'_\la$, which will
be defined below. The algebraic set $Y_\la$ has pure codimension
$|\la|=\sum \la_i$ and determines a class $[Y_{\la}]$ in
$\HH^{2|\la|}(\OG,\Z)$.

Consider the exact sequence of vector bundles over $X=\OG$
\[
0 \to \cS \to V_X \to \cQ \to 0,
\]
where $V_X$ denotes the trivial bundle of rank $N$ and $\cS$ is the
tautological subbundle of rank $m$.  The Chern classes $c_p =
c_p(\cQ)$ of $\cQ$ satisfy 
\begin{equation}
\label{ctoY}
c_p=
\begin{cases}
[Y_p] &\text{if $p \leq k$},\\
2[Y_p] &\text{if $p> k$}.
\end{cases}
\end{equation}

As in \cite{BKT2}, we will express our Giambelli formulas using
Young's raising operators \cite{Y}.  For any integer sequence
$\alpha=(\alpha_1,\alpha_2,\ldots)$ with finite support and $i<j$, we
define $R_{ij}(\alpha) = (\alpha_1, \ldots, \alpha_i+1, \ldots,
\alpha_j-1, \ldots)$.  We also set $c_\alpha = \prod_i c_{\alpha_i}$.
If $R$ is any finite monomial in the $R_{ij}$'s, then set $R\,c_{\al}
= c_{R \,\al}$; we stress that the operator $R$ acts on the subscript
$\al$ and not on the monomial $c_\al$ itself.  Given a $k$-strict
partition $\la$ we define the operator
\begin{equation}
\label{Req}
R^{\la} =\prod (1-R_{ij})\prod_{\la_i+\la_j \geq
K+j-i}(1+R_{ij})^{-1}
\end{equation}
where the first product is over all pairs $i<j$ and the second product
is over pairs $i<j$ such that $\la_i+\la_j \geq K+j-i$.  Let
$\ell_k(\la)$ denote the number of parts $\la_i$ which are strictly
greater than $k$.

\begin{thm}
\label{Chthm}
For any $k$-strict partition $\la$ contained in an $m \times
(n+k)$ rectangle, we have 
$[Y_\la] =2^{-\ell_k(\la)}R^{\la} \, c_{\la}$
in the cohomology ring of $\OG(m,N)$. 
\end{thm}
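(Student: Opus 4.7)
The plan is to split on the parity of $N$. When $N = 2n+1$ is odd, the statement is precisely the main Giambelli theorem of \cite{BKT2}, which I simply invoke. The content of the theorem therefore lies in the even case $N = 2n+2$, on which I focus from here on.

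For the even case, the argument exploits the involution $\s$ of $\OG(m, 2n+2)$ coming from the order-two automorphism of the Dynkin diagram of type $\mathrm{D}_{n+1}$. The induced action on $\HH^*(\OG(m, 2n+2), \Q)$ produces an eigenspace decomposition whose $(+1)$-part $\HH^*_+$ is, as mentioned in the introduction, the subring generated by the Chern classes of the tautological bundles $\cS$ and $\cQ$. First I would verify that both sides of the proposed identity lie in $\HH^*_+$. The right-hand side is by construction a polynomial in the classes $c_p = c_p(\cQ)$ and so lies in $\HH^*_+$ automatically. For the left-hand side, if $k$ is not a part of $\la$, then $Y_\la$ is a single Schubert variety fixed setwise by $\s$, hence $[Y_\la] \in \HH^*_+$; if $k$ is a part of $\la$, then $Y_\la = X_\la \cup X'_\la$ with $\s$ interchanging $X_\la$ and $X'_\la$, so $[Y_\la] = [X_\la] + [X'_\la]$ is again $\s$-invariant.

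Once the identity has been reduced to a relation in $\HH^*_+$, I would apply Proposition \ref{prop-1} to transport it from the odd case. The role of this proposition is to supply a dictionary between the $(+1)$-eigenspace on $\OG(m, 2n+2)$ and the ordinary Schubert calculus on $\OG(m, 2n+1)$, matching the classes $[Y_\la]$ and the Chern generators $c_p$ on the two Grassmannians. Given such a dictionary, the Giambelli formula of \cite{BKT2} transfers verbatim to yield the required identity, because the right-hand side $2^{-\ell_k(\la)} R^\la c_\la$ is the same formal expression in the Chern generators in both settings, and the factor $2^{-\ell_k(\la)}$ together with the relation \eqref{ctoY} absorbs the doubling that occurs for $c_p$ with $p > k$. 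The main obstacle is Proposition \ref{prop-1} itself: the surprising comparison between even and odd orthogonal Grassmannians does the geometric heavy lifting, and without it there is no route from the odd Giambelli formula to the even case. Once that proposition is established, the reduction above turns Theorem \ref{Chthm} into a direct consequence.
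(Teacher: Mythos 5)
Your reduction of the even case to the odd case via Proposition \ref{prop-1} does not work, and the gap is essential. Proposition \ref{prop-1} concerns the $(-1)$-eigenspace of the involution: it identifies $\HH^*(\OG(m-1,2n+1),\Q)$ with $\HH^*(\OG(m,2n+2),\Q)_{-1}$ by sending $\s_\la$ to the \emph{difference} $\ta_{\la+k}-\ta'_{\la+k}$. The classes $[Y_\la]$ you need to compute live in the $(+1)$-eigenspace, and the paper offers no dictionary between $\HH^*(\OG,\Q)_1$ and the cohomology of an odd orthogonal Grassmannian. Nor can such a verbatim dictionary exist: the Pieri rule (\ref{whYpierieq}) for the $[Y_\la]$ with $K=2k$ uses the multiplicities $2^{\wh N(\la,\mu)}$, which carry an extra factor of $2$ exactly when $\la$ has positive type and $\mu$ does not, and the underlying relation $\la\to\mu$ depends on the notion of $K$-relatedness, which changes with the parity of $K$. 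So the $\Q[c]$-module structure on $\HH^*(\OG,\Q)_1$ is genuinely different from that of any odd case, and the odd Giambelli formula of \cite{BKT2} cannot be "transferred verbatim." (There is also a logical ordering issue: in the paper, the explicit formula in Proposition \ref{prop-1} is itself derived \emph{from} Theorem \ref{Chthm} applied to $\ov{\OG}$, so leaning on it here would invert the intended dependency; it is Theorem \ref{mainthm}, not Theorem \ref{Chthm}, that is obtained by combining the two eigenspace computations.)

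What the paper actually does for Theorem \ref{Chthm} is uniform in the parity of $N$ and is carried out in Sections \ref{classpieri} and \ref{Wpieriproof}: first, the Chern class Pieri rule (\ref{whYpierieq}) for the classes $[Y_\la]$ is extracted from the geometric Pieri rules of \cite{BKT1}; second, Theorem \ref{whWpieri} shows that the raising operator expressions $W_\la=2^{-\ell_k(\la)}R^\la c_\la$ satisfy the \emph{same} Pieri rule, by a lengthy combinatorial algorithm with valid sets of pairs and a sign-reversing involution, adapted from \cite{BKT2} to accommodate $K=2k$ (this is where the corrected multiplicities $\wh N$ and the middle-row subtleties for $\la_m=K/2$ enter); finally, a triangularity argument in the dominance order, expanding $c_{\la_1}\cdots c_{\la_\ell}$ in both bases, forces $W_\la=[Y_\la]$. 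This combinatorial core — the even-case analogue of the Pieri rule for the raising operator classes — is the entire content of the theorem and is absent from your proposal.
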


When $N$ is odd, Theorem \ref{Chthm} is the Giambelli formula for the
Schubert classes on odd orthogonal Grassmannians from \cite[\S
2]{BKT2}; the result for even $N$ is proved along the same lines. We
next will refine Theorem \ref{Chthm} to obtain a Giambelli polynomial
representing any Schubert class in the even orthogonal case.

For the rest of this section, we assume that $N=2n+2$ is even, so that
$m=n+1-k$ and $K=2k>0$.  Fix a maximal isotropic subspace $L$ of $V$,
i.e.\ with $\dim(L)=n+1$.  Two maximal isotropic subspaces $E$ and $F$
of $V$ are said to be in the same family if $\dim(E\cap F)\equiv n+1
\,(\text{mod}\, 2)$.  The Schubert varieties in $\OG$ are defined
relative to an isotropic flag $F_\bull$, and their classes are
independent of this flag as long as $F_{n+1}$ is in the same family as
$L$.

A {\em typed $k$-strict partition} $\la$ consists of a $k$-strict
partition $(\la_1, \dots, \la_\ell)$ together with an integer
$\type(\la) \in \{0,1,2\}$, such that $\type(\la)>0$ if and only if
$\la_j=k$ for some index $j$.  Let $\wt{\cP}(k,n)$ denote the set of
all typed $k$-strict partitions whose Young diagrams are contained in
an $m\times (n+k)$ rectangle.  Notice that $\la_j=k<\la_{j-1}$ if and
only if $\ov{p}_j(\la) = n+2$.  For every $\la\in\wt{\cP}(k,n)$,
define the index function $p_j=p_j(\la)$ by
\[
 p_j(\lambda) =  \begin{cases}
      \ov{p}_j(\la)-1 & \text{if $\lambda_j=k < \lambda_{j-1}$ and
        $n+j+\type(\lambda)$ is even}, \\
      \ov{p}_j(\la) & \text{otherwise}.
   \end{cases}
\]
According to \cite{BKT1}, for every isotropic flag $F_\bull$ we have a
Schubert cell $X^{\circ}_{\la}=X^{\circ}_{\la}(F_\bull)$ in $\OG$, defined as the
locus of $\Sigma \in \OG$ such that $\dim(\Sigma \cap F_i) = 
\#\{j\ |\ p_j \leq i \}$
for each $i$. The Schubert variety $X_{\la}$ is
the Zariski closure of the Schubert cell $X^{\circ}_{\la}$. We let
$\ta_{\la}=[X_{\la}]$ denote the corresponding Schubert class in
$\HH^{2|\la|}(\OG,\Z)$; this class has a type which agrees with the
type of $\la$.

We say that a $k$-strict partition $\la$ has {\em positive type} if
$\la_i=k$ for some index $i$. If the (untyped) $k$-strict partition
$\la$ has positive type, we agree that $\ta_\la=[X_\la]$ and
$\ta'_\la=[X'_\la]$ denote the Schubert classes in
$\HH^{2|\la|}(\OG(m,2n+2))$ of type $1$ and $2$, respectively,
associated to $\la$. If $\la$ does not have positive type, then
$\ta_\la=[X_\la]$ denotes the associated Schubert class of type zero.
We then have that $[Y_\la]=\ta_\la+\ta'_\la$, if $\la$ has positive
type, while $[Y_\la]=\ta_\la$, otherwise.

The non-trivial automorphism of the Dynkin diagram of type
$\mathrm{D}_{n+1}$ gives rise to an involution $\iota$ of $\OG$, which
interchanges $X_\la$ and $X'_\la$. This in turn results in a weight
space decomposition
\begin{equation}
\label{decomp}
\HH^*(\OG,\Q) = \HH^*(\OG,\Q)_1 \oplus \HH^*(\OG,\Q)_{-1}.
\end{equation}

\begin{iprop}
\label{prop+1}
  The $\iota$-invariant subring of $\HH^*(\OG,\Q)$ is generated by the
  Chern classes of $\cQ$ and is spanned by the classes $[Y_\la]$, i.e.,
  \[
  \HH^*(\OG,\Q)_1 = \Q[c_1(Q), \dots, c_{n+k}(Q)]
  = \bigoplus_\la \Q \cdot [Y_\la] \,,
  \]
where the sum is over all $k$-strict partitions $\la$ contained in an 
$m\times (n+k)$ rectangle.
\end{iprop}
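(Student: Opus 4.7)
The plan is to establish the proposition via a three-term sandwich:
\[
\bigoplus_\la \Q\cdot [Y_\la]\ \subseteq\ \Q[c_1(\cQ),\dots,c_{n+k}(\cQ)]\ \subseteq\ \HH^*(\OG,\Q)_1\ \subseteq\ \bigoplus_\la \Q\cdot [Y_\la],
\]
which forces all three to coincide. Each inclusion is short, and together they give both assertions of the proposition simultaneously.

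First I would analyze the action of $\iota$ on Schubert classes. Since $\iota$ interchanges $X_\la$ and $X'_\la$ when $\la$ has positive type and fixes $X_\la$ otherwise, the pushforward $\iota_*$ (equivalently $\iota^*$, as $\iota$ is an involution) acts on the Schubert basis of $\HH^*(\OG,\Q)$ by swapping $\ta_\la$ and $\ta'_\la$ for $\la$ of positive type and fixing all other $\ta_\la$. It follows immediately that the classes $[Y_\la]=\ta_\la+\ta'_\la$ (positive type) and $[Y_\la]=\ta_\la$ (otherwise) are $\iota$-invariant and, together with the antisymmetric combinations $\ta_\la-\ta'_\la$, form an eigenbasis of $\HH^*(\OG,\Q)$ for the weight decomposition \eqref{decomp}. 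This proves the last inclusion $\HH^*(\OG,\Q)_1 \subseteq \bigoplus_\la\Q\cdot [Y_\la]$ (in fact equality), and exhibits $\bigoplus_\la \Q\cdot[Y_\la]$ as precisely the $+1$-eigenspace.

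Next I would note that the Chern classes $c_p(\cQ)$ are $\iota$-invariant: by \eqref{ctoY} each $c_p$ is a scalar multiple of $[Y_p]$, and $[Y_p]$ lies in the $+1$-eigenspace by the previous paragraph. (Alternatively, one can observe that the Dynkin involution lifts to an automorphism of $V$ preserving the symmetric form, which induces $\iota$ on $\OG$ and pulls $\cQ$ back to itself.) Hence the subring $\Q[c_1(\cQ),\dots,c_{n+k}(\cQ)]$ is contained in $\HH^*(\OG,\Q)_1$, giving the middle inclusion.

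Finally, Theorem \ref{Chthm} provides the first inclusion: for every $k$-strict partition $\la$ fitting in an $m\times(n+k)$ rectangle, the formula $[Y_\la]=2^{-\ell_k(\la)}R^\la c_\la$ expresses $[Y_\la]$ as a polynomial (with rational coefficients) in the special classes $c_1,\dots,c_{n+k}$. Thus $\bigoplus_\la \Q\cdot [Y_\la]\subseteq \Q[c_1(\cQ),\dots,c_{n+k}(\cQ)]$, completing the chain. No step here is a real obstacle: the entire content of the proposition is already encoded in Theorem \ref{Chthm} combined with the elementary observation that $[Y_\la]$ is symmetric with respect to $\iota$; the argument is essentially a bookkeeping step that extracts the subring structure from the Giambelli formula.
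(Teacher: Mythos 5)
Your argument is correct and matches the paper's (very terse) proof in \S\ref{mainpfs} in all essentials: both rest on the identification $[Y_\la]=\wh{\ta}_\la$, which exhibits the classes $[Y_\la]$ as a basis of the $+1$-eigenspace, together with the fact that the $[Y_\la]$ and the monomials in the $c_p$ span the same subspace. The only cosmetic difference is that you quote Theorem~\ref{Chthm} for the inclusion $\bigoplus_\la\Q\cdot[Y_\la]\subseteq\Q[c_1,\dots,c_{n+k}]$, whereas the paper obtains it from the unitriangularity of the Pieri rule (\ref{whYpierieq}); since Theorem~\ref{Chthm} is already established at that point in the paper, both routes are equally valid.
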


There are certain special Schubert varieties in $\OG(m,2n+2)$, defined
by a single Schubert condition, as the locus of $\Sigma\in\OG$ which
non-trivially intersect a given isotropic subspace or its orthogonal
complement.  The corresponding {\em special Schubert classes}
\begin{equation}
\label{spcls}
\ta_1,\ldots,\ta_{k-1},\ta_k,\ta'_k,\ta_{k+1},\ldots,\ta_{n+k}
\end{equation}
are indexed by the typed $k$-strict partitions with a single non-zero
part, and generate the cohomology ring $\HH^*(\OG,\Z)$.  We have
$\type(\tau_k)=1$, $\type(\tau'_k)=2$, and in this case equation
(\ref{ctoY}) becomes
\begin{equation}
\label{ctotau}
c_p(\cQ)=
\begin{cases}
\ta_p &\text{if $p< k$},\\
\ta_k+\ta_k' &\text{if $p=k$},\\
2\ta_p &\text{if $p> k$}.
\end{cases}
\end{equation}

Set $\ov{\OG} = \OG(n-k,2n+1) = \OG(m-1,N-1)$.  If the $k$-strict
partition $\la$ is contained in an $(n-k) \times (n+k)$ rectangle, let
$\s_\la$ denote the corresponding Schubert class in $\HH^*(\ov{\OG},
\Z)$. Both $\HH^*(\ov{\OG},\Q)$ and $\HH^*(\OG,\Q)$ are modules over
the ring $\Q[c]:=\Q[c_1,\dots,c_{n+k}]$, where the variables $c_p$ act as
multiplication with the Chern classes of the respective quotient
bundles on $\ov{\OG}$ and $\OG$. For any $k$-strict partition $\la$ we
let $\la + k$ denote the partition obtained by adding one copy of $k$
to $\la$ (and arranging the parts in decreasing order).

\begin{iprop}\label{prop-1}
  The linear map
  $\HH^*(\ov{\OG},\Q) \to \HH^*(\OG,\Q)_{-1}$ defined by $\s_\la
  \mapsto \ta_{\la+k} - \ta'_{\la+k}$ is an isomorphism of
  $\Q[c_1,\dots,c_{n+k}]$-modules. Moreover, we have 
\[
  \HH^*(\OG,\Q)_{-1} = \Q[c_1,\dots,c_{n+k}] \cdot (\ta_k -
  \ta'_k) = \bigoplus_{\la} \Q \cdot 
(\ta_{\la+k} - \ta'_{\la+k}) \,,
\]
where the sum is over all $k$-strict partitions $\la$ contained in
an $(m-1)\times (n+k)$ rectangle, and
\[
\ta_{\la+k} - \ta'_{\la+k} = 2^{-\ell_k(\la)} (\tau_k-\tau'_k) 
\,\wt{R}^{\la} \, c_\la,
\]
where $\wt{R}^\la$ is defined by equation {\em (\ref{Req})} with $K=2k+1$.
\end{iprop}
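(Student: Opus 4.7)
The plan is threefold: identify the $\Q$-basis of the $(-1)$-eigenspace, prove that $\phi$ is $\Q[c]$-linear using the known Pieri rules, and then derive the Giambelli formula as a consequence.

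First, identify the basis. By Proposition \ref{prop+1} the Chern classes $c_p$ are $\iota$-invariant, and by construction $\iota$ fixes every type-zero Schubert class while interchanging $\ta_\mu\leftrightarrow\ta'_\mu$ for each positive-type typed $k$-strict $\mu$. Hence $\HH^*(\OG,\Q)_{-1}$ has $\Q$-basis $\{\ta_\mu-\ta'_\mu\}$ indexed by positive-type $\mu=\la+k$ with $\la$ a $k$-strict partition contained in the $(m-1)\times(n+k)$ rectangle, which is exactly the index set of the Schubert basis of $\HH^*(\ov{\OG},\Q)$. Thus the map $\phi\colon\s_\la\mapsto\ta_{\la+k}-\ta'_{\la+k}$ is automatically a $\Q$-linear isomorphism.

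Second, prove that $\phi$ is $\Q[c]$-linear. For each $p$ and each $k$-strict $\mu$, I would compute $c_p\cdot\ta_{\mu+k}$ and $c_p\cdot\ta'_{\mu+k}$ in $\HH^*(\OG)$ using the Pieri rule of \cite{BKT1}, subtract, and verify the matching identity
\[
c_p\cdot(\ta_{\mu+k}-\ta'_{\mu+k}) = \sum_\la a^\la_{\mu,p}(\ta_{\la+k}-\ta'_{\la+k}),
\]
where the $a^\la_{\mu,p}$ are the coefficients appearing in the odd orthogonal Pieri rule $c_p\cdot\s_\mu=\sum_\la a^\la_{\mu,p}\s_\la$ of \cite{BKT2}. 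Since these relations hold on a $\Q$-basis, $\Q[c]$-linearity of $\phi$ follows at once. The main obstacle is this matching: the Pieri expansions on $\OG(m,2n+2)$ involve contributions from both type-zero and positive-type Schubert classes, and one must check that the type-zero contributions cancel in the difference while the positive-type contributions pair up correctly, with coefficients agreeing with those from the odd orthogonal Pieri rule.

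Third, deduce the remaining statements. Applying Theorem \ref{Chthm} in the odd orthogonal Grassmannian $\ov{\OG}$ gives the Giambelli formula $\s_\la=2^{-\ell_k(\la)}\wt R^\la c_\la$ of \cite{BKT2}. Applying $\phi$ and using its $\Q[c]$-linearity together with $\phi(1)=\ta_k-\ta'_k$ yields
\[
\ta_{\la+k}-\ta'_{\la+k}=\phi(\s_\la)=2^{-\ell_k(\la)}\,\wt R^\la c_\la\cdot\phi(1)=2^{-\ell_k(\la)}(\ta_k-\ta'_k)\,\wt R^\la c_\la,
\]
which is the asserted Giambelli formula. In particular every basis element $\ta_{\la+k}-\ta'_{\la+k}$ lies in $\Q[c]\cdot(\ta_k-\ta'_k)$, so $\HH^*(\OG,\Q)_{-1}=\Q[c]\cdot(\ta_k-\ta'_k)=\bigoplus_\la\Q\cdot(\ta_{\la+k}-\ta'_{\la+k})$.
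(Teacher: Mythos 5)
Your proposal is correct and follows essentially the same route as the paper: the paper likewise derives a Pieri rule for the differences $\wt{\ta}_\la=\ta_\la-\ta'_\la$ from the typed Pieri rule of \cite{BKT1}, matches it against the odd orthogonal Pieri rule with $K=2k+1$ under $\la\mapsto\la+k$ to get $\Q[c]$-linearity, and then pushes the odd Giambelli formula through the isomorphism using $\psi(1)=\ta_k-\ta'_k$. The combinatorial matching of Pieri coefficients that you flag as ``to be checked'' is exactly the step the paper also treats as a direct comparison of the two rules, so your level of detail is consistent with the source.
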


The statement in Proposition~\ref{prop-1} that the isomorphism of
vector spaces
\begin{equation}
\label{ogisom}
\s_\la \mapsto \ta_{\la+k} - \ta'_{\la+k}
\end{equation}
is also an isomorphism of $\Q[c]$-modules is an
apparently new relation between the cohomology of even and odd
orthogonal Grassmannians.  This result depends crucially on our
convention for assigning types to Schubert classes on $\OG$, which was
introduced in \cite{BKT1}. The convention was chosen in loc.\ cit.\
because it results in a relatively simple Pieri formula for products
with the special Schubert classes. Our proof that (\ref{ogisom}) gives
a $\Q[c]$-module homomorphism is based on this Pieri rule; it would be
interesting to expose a direct geometric argument.

Theorem \ref{Chthm} and Proposition \ref{prop-1} imply our main
result, a Giambelli formula which expresses any Schubert class
$\ta_\la$ in terms of the above special classes.  Let $R$ be any
finite monomial in the operators $R_{ij}$ which appears in the
expansion of the power series $R^\la$ in (\ref{Req}).  If
$\type(\la)=0$, then set $R \star c_{\la} = c_{R \,\la}$. Suppose that
$\type(\la)>0$, let $d = \ell_k(\la)+1$ be the index such that
$\la_d=k < \la_{d-1}$, and set $\wh{\al} =
(\al_1,\ldots,\al_{d-1},\al_{d+1},\ldots,\al_\ell)$ for any integer
sequence $\al$ of length $\ell$. If $R$ involves any factors $R_{ij}$
with $i=d$ or $j=d$, then let $R \star c_{\la} = \frac{1}{2}\,c_{R
\,\la}$. If $R$ has no such factors, then let
\[
R \star c_{\la} = \begin{cases}
\ta_k \,c_{\wh{R \,\la}} & \text{if  $\,\type(\la) = 1$}, \\
\ta'_k \, c_{\wh{R \,\la}} & \text{if  $\,\type(\la) = 2$}.
\end{cases}
\]

\begin{thm}[Classical Giambelli for $\OG$]
\label{mainthm}
For every $\la\in \wt{\cP}(k,n)$, we have 
$$\ta_\la =2^{-\ell_k(\la)}R^{\la} \star c_{\la}$$
in the cohomology ring of $\OG(n+1-k,2n+2)$. 
\end{thm}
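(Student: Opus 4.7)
The plan is to combine Theorem~\ref{Chthm} with Proposition~\ref{prop-1} and the weight space decomposition (\ref{decomp}) to isolate the individual Schubert classes $\ta_\la$. If $\type(\la)=0$, then $\la$ has no part equal to $k$, so $Y_\la$ is a single Schubert variety and $\ta_\la=[Y_\la]$. Theorem~\ref{Chthm} then gives $\ta_\la=2^{-\ell_k(\la)}R^\la c_\la=2^{-\ell_k(\la)}R^\la\star c_\la$, since the $\star$ action reduces to the ordinary action on subscripts for $\type(\la)=0$. The substance of the theorem is therefore the case $\type(\la)=t\in\{1,2\}$.

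Write $\tilde\la$ for the underlying $k$-strict partition of $\la$ and $\mu=\wh{\tilde\la}$ for $\tilde\la$ with the first $k$ removed; observe that $\ell_k(\mu)=\ell_k(\tilde\la)=\ell_k(\la)$. The $(+1)$-eigenspace identity from Theorem~\ref{Chthm} reads
\[
\ta_{\tilde\la}+\ta'_{\tilde\la}=[Y_{\tilde\la}]=2^{-\ell_k(\tilde\la)}R^{\tilde\la}c_{\tilde\la},
\]
and the $(-1)$-eigenspace identity from Proposition~\ref{prop-1} reads
\[
\ta_{\tilde\la}-\ta'_{\tilde\la}=2^{-\ell_k(\mu)}(\ta_k-\ta'_k)\wt{R}^\mu c_\mu.
\]
Averaging with sign $+$ for $t=1$ and $-$ for $t=2$ yields
\[
\ta_\la = 2^{-\ell_k(\la)-1}\bigl[\,R^{\tilde\la}c_{\tilde\la} + (-1)^{t-1}(\ta_k-\ta'_k)\wt{R}^\mu c_\mu\,\bigr].
\]

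To match this with $2^{-\ell_k(\la)}R^\la\star c_\la$, decompose $R^\la=A+B$, where $A$ is the sum of monomials that involve the index $d$ (through some $R_{id}$ or $R_{dj}$) and $B$ collects the remaining monomials. Unwinding the definition of $\star$ gives $R^\la\star c_\la=\tfrac12 Ac_{\tilde\la}+\gamma\,Bc_\mu$, with $\gamma=\ta_k$ if $t=1$ and $\gamma=\ta'_k$ if $t=2$. Since $B$ does not touch the $d$-th slot of the subscript, one has $Bc_{\tilde\la}=c_k\,Bc_\mu=(\ta_k+\ta'_k)\,Bc_\mu$. Substituting and equating the two expressions for $\ta_\la$, the linear independence of $\ta_k$ and $\ta'_k$ reduces the entire problem to the single operator identity
\[
B\,c_\mu=\wt{R}^\mu\,c_\mu.
\]

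The main obstacle is this identity. Reindex $B$ so that it acts on the $\ell-1$ indices of $\mu$; both $B$ and $\wt{R}^\mu$ are then products $\prod(1-R_{ij})\prod(1+R_{ij})^{-1}$ whose first factor ranges over all pairs of indices of $\mu$, so it suffices to compare the pairs that carry the factor $(1+R_{ij})^{-1}$. For pairs of $B$ indexed across $d$ (i.e.\ $i<d<j$ in $\tilde\la$) the $\tilde\la$-gap exceeds the $\mu$-gap by one, and the $B$-condition $\tilde\la_i+\tilde\la_j\ge 2k+(j-i)$ becomes $\mu_{i'}+\mu_{j'}\ge(2k+1)+(j'-i')$, matching the $\wt{R}^\mu$ condition. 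For same-side pairs, any discrepancy would require $\mu_{i'}+\mu_{j'}=2k+(j'-i')$; but this never occurs for a $k$-strict $\mu$, since if both parts exceed $k$ the distinctness of large parts forces $\mu_{i'}+\mu_{j'}\ge 2(k+1)+(j'-i')$, while if both parts are $\le k$ then $\mu_{i'}+\mu_{j'}\le 2k<2k+(j'-i')$. Hence $B=\wt{R}^\mu$ as operators, yielding the required identity and completing the argument.
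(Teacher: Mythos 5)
Your proof is correct and follows essentially the same route as the paper: average the $(+1)$-eigenspace formula of Theorem \ref{Chthm} with the $(-1)$-eigenspace formula of Proposition \ref{prop-1}, and match the result with $R^\la \star c_\la$ by splitting $R^\la$ into the monomials that do or do not involve the index $d$ (the paper packages this as equations (\ref{wtprop_eqn}), (\ref{eqn:Rcases}) and (\ref{whwtta})). The operator identity $B=\wt{R}^{\mu}$ that you verify case by case is exactly the step the paper asserts when it declares (\ref{la+k}) ``equivalent to'' (\ref{wtprop_eqn}); your write-up simply makes that combinatorial check explicit.
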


For example, consider the ring $\HH^*(\OG(4,12))$ (where $k=2$) and
the partition $\la=(3,2,2)$.  Then the Schubert class for $\la$ of
type 2 is given by
\begin{align*}
\ta'_\la & = \frac{1}{2}
\frac{1-R_{12}}{1+R_{12}}(1-R_{13})(1-R_{23}) \star c_{322} \\
&= \frac{1}{2}(1-2R_{12}+2R_{12}^2 - 2 R_{12}^3)(1-R_{13}-R_{23}+R_{13}R_{23}) 
\star c_{322} \\
&= \ta_3\ta'_2(\ta_2+\ta_2') - \ta_4\ta'_2\ta_1 +\ta_6\ta_1 - \ta_3^2\ta_1
+\ta_4\ta_3 -\ta_7.
\end{align*}
We remark that in general, the Giambelli formula expresses the
Schubert class $\ta_\la$ as a polynomial in the special Schubert
classes (\ref{spcls}) with {\em integer} coefficients.

The small quantum cohomology ring $\QH(\OG)$ is a $q$-deformation of the 
cohomology ring $\HH^*(\OG,\Z)$ whose structure constants are defined
by the three point, genus zero Gromov-Witten invariants of $\OG$. Here 
$q$ denotes a formal variable if $k\geq 2$, or a pair $(q_1,q_2)$ of
formal variables if $k=1$. The quantum Pieri rule of \cite{BKT1} implies 
that the ring $\QH(\OG)$ is generated by the special Schubert classes. 
The following quantum Giambelli formula is the analogue of 
\cite[Thm.\ 2]{BKT3} in the even orthogonal case.

\begin{thm}[Quantum Giambelli for $\OG$]
\label{qgiam}
  For every $\la\in\wt{\cP}(k,n)$, we have
  \[
  \ta_\la = 2^{-\ell_k(\la)}R^{\la}\star c_{\la}
  \]
  in the quantum cohomology ring $\QH(\OG(n+1-k,2n+2))$.  In other
  words, the quantum Giambelli formula for $\OG$ is the same as the
  classical Giambelli formula.
\end{thm}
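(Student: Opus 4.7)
The plan is to adapt the strategy used in \cite[Thm.~2]{BKT3} for the odd orthogonal Grassmannian, combining it with the new classical inputs Theorem~\ref{Chthm} and Proposition~\ref{prop-1}. The crucial feature is that no explicit $q$-terms appear in the Giambelli formula, so the task reduces to verifying that the identity of Theorem~\ref{mainthm} remains valid once the cup product in $\HH^*(\OG,\Z)$ is replaced by the small quantum product in $\QH(\OG)$. I would first establish the quantum analogue of Theorem~\ref{Chthm},
\[
[Y_\la] = 2^{-\ell_k(\la)} R^\la\, c_\la \quad\text{in }\QH(\OG).
\]
Since $\iota$ extends to a ring automorphism of $\QH(\OG)$, both the classes $[Y_\la] = \ta_\la + \ta'_\la$ and the Chern class monomials $c_\la$ lie in the $\iota$-invariant subring, and I would argue by induction on $|\la|$, using the quantum Pieri rule of \cite{BKT1} to compute $[Y_\mu] \star c_p$ and matching the result against the raising-operator expansion, incorporating the quantum corrections into the scheme used to prove Theorem~\ref{Chthm}.

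Next I would prove the quantum version of the identity from Proposition~\ref{prop-1},
\[
\ta_{\la+k} - \ta'_{\la+k} = 2^{-\ell_k(\la)} (\ta_k - \ta'_k)\, \wt R^\la\, c_\la \quad\text{in }\QH(\OG).
\]
For this I would combine the quantum Giambelli formula on $\ov{\OG} = \OG(n-k,2n+1)$ from \cite[Thm.~2]{BKT3} with the quantum Pieri rule of \cite{BKT1}, aiming to promote the $\Q[c]$-module isomorphism of Proposition~\ref{prop-1} to an identity compatible with the quantum products by the $c_p$ and by $\ta_k - \ta'_k$ on the $(-1)$-eigenspace. Since every Schubert class $\ta_\la$ is determined by $[Y_\la]$ together with the difference $\ta_\la - \ta'_\la$ (nonzero only when $\la$ has positive type), the two preceding quantum identities combine via exactly the bookkeeping used to derive Theorem~\ref{mainthm} to yield the desired formula $\ta_\la = 2^{-\ell_k(\la)} R^\la \star c_\la$ in $\QH(\OG)$.

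The main obstacle will be managing the quantum corrections in both induction steps. Concretely, one must show that the Gromov--Witten contributions to $[Y_\mu] \star c_p$ and to $(\ta_k - \ta'_k) \star \ta_\mu$ reassemble into precisely the raising-operator expressions predicted by the classical formulas; this requires a careful identification of quantum corrections on $\OG(n+1-k,2n+2)$ with classical products on the smaller orthogonal Grassmannians that appear as recipients of those corrections in the quantum Pieri rule, along with a check that the types of all intermediate Schubert classes are correctly tracked throughout the inductive argument.
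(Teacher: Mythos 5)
Your overall architecture differs from the paper's, and the place where you have hidden the difficulty is precisely the place where a direct attack fails. Your first step --- proving $[Y_\la]=2^{-\ell_k(\la)}R^\la\,c_\la$ in $\QH(\OG)$ by running the raising-operator/Pieri induction of Theorem~\ref{Chthm} with quantum corrections "incorporated" --- is not a routine adaptation. The classical proof of Theorem~\ref{Chthm} rests on the Chern class relations (\ref{kpresrels}) and on Lemmas \ref{commuteA}--\ref{easylm}, which manipulate formal expressions $T(D,\mu)$ built from products of the $c_p$. In $\QH(\OG)$ the top-degree relations among the $c_p$ acquire $q$-deformations, so these lemmas and the entire Substitution Rule algorithm break down; you would have to track and cancel $q$-terms throughout a combinatorial argument that was designed assuming they vanish, and you give no mechanism for doing so. The paper avoids this entirely: it works in the stable ring $\IH(\OG_k(\mathrm{even}))$, constructs a ring homomorphism $\pi$ to $\QH(\OG(n+1-k,2n+2))$ by checking that the defining relations (\ref{stabrel1})--(\ref{stabrel2}) map to zero (Proposition~\ref{piprop}), and proves $\pi(\ta_\la)=\ta_\la$ by induction on $\ell(\la)$ using the recursion of Proposition~\ref{prop:recurse}. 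The decisive point is that in that recursion every product $c_p\cdot\wh\ta_\mu$ and $c_p\cdot\wt\ta_{\mu+k}$ has \emph{no} quantum correction at all (because $\mu\subset\la^*$ keeps the partitions away from the range where the quantum Pieri rule produces $q$-terms) --- the corrections do not "reassemble," they simply never arise. Theorem~\ref{qgiam} then follows from the degree bound of Lemma~\ref{lem:only}, which ensures the Giambelli polynomial only involves $c_p$ with $p\le 2n+2k-1$, where $\pi$ acts as in the classical truncation.

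A second, more localized gap: your plan to "promote the $\Q[c]$-module isomorphism of Proposition~\ref{prop-1} to an identity compatible with the quantum products" overlooks a sign. The correct quantum statement (Proposition~\ref{qprop-1}) sends $\s_\la\,q^d\mapsto\wt\ta_{\la+k}\,(-q)^d$; the naive $\Q[q]$-linear extension of $\s_\la\mapsto\wt\ta_{\la+k}$ is \emph{not} a $\Q[c]$-module map between the quantum rings, so transferring the odd quantum Giambelli formula from $\ov\OG$ to the $(-1)$-eigenspace as you propose would produce wrong signs on the $q$-terms unless this twist is built in. The case $K=2$ also needs separate treatment (two quantum parameters, and the exceptional product $c_1\cdot\wh\ta_{(1^n)}$), which your plan does not mention.
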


As in \cite{BKT2}, we will use raising operators to define a family of
polynomials $\{\Eta_\la\}$ indexed by typed $k$-strict partitions
whose algebra agrees with the Schubert calculus in the stable
cohomology ring of $\OG$. Let $x=(x_1,x_2,\ldots)$ be an infinite
sequence of variables and $y=(y_1,\ldots,y_k)$ be a finite set of $k$
variables.  We define the functions $q_r(x)$ and $e_r(y)$ by the
equations
\[
\prod_{i=1}^{\infty}\frac{1+x_it}{1-x_it} = \sum_{r=0}^{\infty}q_r(x)t^r
\ \ \ \mathrm{and} \ \ \ 
\prod_{j=1}^k(1+y_jt) = \sum_{r=0}^ke_r(y)t^r
\]
and set $\vartheta_r = \vartheta_r(x\,;y) = \sum_{i=0}^r
q_{r-i}(x)e_i(y)$ for each $r\geq 0$. (The $\vartheta_r$ will play the
role of the Chern classes $c_r(\cQ)$.) Define $\eta_r = \ti_r$ for
$r<k$, $\eta_r=\frac{1}{2}\ti_r$ for $r>k$, and set $\eta_k =
\frac{1}{2}\ti_k + \frac{1}{2}e_k(y)$ and $\eta'_k = \frac{1}{2}\ti_k
- \frac{1}{2}e_k(y) = \frac{1}{2}\sum_{i=0}^{k-1}q_{k-i}(x)e_i(y)$.
We call $B^{(k)}= \Z[\eta_1, \ldots, \eta_{k-1}, \eta_k, \eta'_k,
  \eta_{k+1}, \ldots]$ the ring of {\em eta polynomials}.  For any
typed $k$-strict partition $\la$, define the eta polynomial
\begin{equation}
\label{Heq}
H_\la = 2^{-\ell_k(\la)}\,R^\la \star \ti_\la. 
\end{equation}
The raising operator expression in (\ref{Heq}) is defined in the same
way as the analogous one in Theorem \ref{mainthm}, but using $\ti_r$
and $\eta_k,\eta'_k$ in place of $c_r$ and $\tau_k, \tau'_k$,
respectively.

\begin{thm}
\label{productthm}
The $\Eta_\la$, for $\la$ a typed $k$-strict partition, form a
$\Z$-basis of $B^{(k)}$. There is a surjective ring homomorphism
$B^{(k)} \to \HH^*(\OG(n+1-k,2n+2),\Z)$ such that $\Eta_\la$ is mapped to
$\tau_\la$, if $\la$ fits inside an $(n+1-k)\times (n+k)$ rectangle,
and to zero, otherwise.
\end{thm}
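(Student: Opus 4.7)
The plan is to mirror the strategy of the analogous result for odd orthogonal Grassmannians in \cite{BKT2}, proceeding in three steps: (i) show $\{H_\la\}$ is a $\Z$-basis of $B^{(k)}$, (ii) construct the ring map and verify it sends $H_\la\mapsto\ta_\la$ for $\la$ fitting in the rectangle, and (iii) prove the vanishing $\pi(H_\la)=0$ for $\la$ outside the rectangle.

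For step (i), I would first observe that $B^{(k)}=\Z[\ti_1,\ti_2,\ldots,e_k(y)]$ via the identities $\ti_k=\eta_k+\eta'_k$, $e_k(y)=\eta_k-\eta'_k$, and $\ti_r=2\eta_r$ for $r>k$; this is a polynomial ring since the $q_r(x)$ together with the $e_i(y)$ are algebraically independent. To establish that $\{H_\la\}$ is a $\Z$-basis, I would expand each $H_\la$ as a $\Z$-linear combination of monomials in $\eta_r,\eta_k,\eta'_k$ using (\ref{Heq}). Since the raising operators weakly dominate the input sequence, only terms indexed by $\mu\succeq\la$ appear, and the leading monomial is $\prod_i\eta_{\la_i}$ with $\eta_k$, $\eta'_k$, or $\ti_k$ selected according to $\type(\la)$; the prefactor $2^{-\ell_k(\la)}$ exactly cancels the halving in $\eta_r=\ti_r/2$ for $r>k$, so that $H_\la$ is monic in the $\eta$-monomial basis. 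The resulting unitriangular transition matrix yields the basis claim.

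For step (ii), since $B^{(k)}$ is a polynomial ring, the assignment $\eta_r\mapsto\ta_r$ for $r<k$, $\eta_k\mapsto\ta_k$, $\eta'_k\mapsto\ta'_k$, $\eta_r\mapsto\ta_r$ for $k<r\leq n+k$, and $\eta_r\mapsto 0$ for $r>n+k$ extends uniquely to a ring homomorphism $\pi\colon B^{(k)}\to\HH^*(\OG,\Z)$; by (\ref{ctotau}) it sends $\ti_r\mapsto c_r(\cQ)$ for every $r\geq 0$ (using the convention $\ta_r=0$ for $r>n+k$). Surjectivity is immediate because the special Schubert classes generate the target, and Theorem \ref{mainthm} applied term by term to the raising-operator formula gives $\pi(H_\la)=\ta_\la$ for every $\la$ fitting in the $(n+1-k)\times(n+k)$ rectangle.

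The main obstacle is step (iii). The horizontal case $\la_1>n+k$ is handled directly: every monomial in the expansion of $R^\la\star\ti_\la$ corresponds to a sequence $\mu=R\cdot\la$ with $\mu_1\geq\la_1>n+k$, so after applying $\pi$ the first factor $c_{\mu_1}(\cQ)=0$ forces the entire monomial to vanish (the $\star$-convention for positive types is benign since $\la_1>k$ forces the distinguished index $d\geq 2$). The vertical case $\ell(\la)>m=n+1-k$ is more delicate; the plan is to reduce it via the weight-space decomposition (\ref{decomp}). Write $\pi(H_\la)=A+B$ with $A\in\HH^*(\OG,\Q)_{+1}$ and $B\in\HH^*(\OG,\Q)_{-1}$, where the splitting is induced by the parity of factors of $\eta_k-\eta'_k=e_k(y)$ in the $\eta$-expansion of $H_\la$. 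By Proposition \ref{prop+1}, $A$ lies in $\Q[c_1(\cQ),\ldots,c_{n+k}(\cQ)]$, and the vanishing of $A$ can be treated as a Chern-class identity amenable to the horizontal argument once embedded in the stable Chern-class polynomial ring. For $B$, the $\Q[c]$-module isomorphism (\ref{ogisom}) of Proposition \ref{prop-1} identifies $B$ with the image of a corresponding raising-operator expression in $\HH^*(\ov{\OG},\Q)$, where the analogous vanishing follows from the Giambelli formula for odd orthogonal Grassmannians in \cite{BKT2}. Carefully tracking the types and the factors $2^{-\ell_k(\la)}$ through this decomposition is the technical heart of the proof.
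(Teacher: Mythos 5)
The central gap is your claim in step~(i) that $B^{(k)}$ is a polynomial ring because ``the $q_r(x)$ together with the $e_i(y)$ are algebraically independent.'' They are not: the $q_r(x)$ satisfy $q_r^2+2\sum_{i=1}^r(-1)^iq_{r+i}q_{r-i}=0$ for all $r\geq 1$, and consequently the generators $\eta_r$ ($r\geq 1$) and $\eta'_k$ of $B^{(k)}$ satisfy the relations (\ref{stabrel1}) and (\ref{stabrel2}) --- this is exactly what equation (\ref{t-to-e}) gives for $r\geq k$, since $e_r(y^2)=0$ for $r>k$. (For $k=1$, for instance, $\eta_2^2-\eta_3(\eta_1+\eta_1')+\eta_4=0$.) This error propagates through your argument: in step~(i) a unitriangular expansion in $\eta$-monomials proves nothing when those monomials are themselves linearly dependent, and in step~(ii) you cannot define $\pi$ by freely assigning values to the generators --- you must verify that the assigned values satisfy the relations, which for $k<r<n+k$ is a genuine computation. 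A smaller point: $B^{(k)}\neq\Z[\ti_1,\ti_2,\ldots,e_k(y)]$ over $\Z$, since $\eta_r=\ti_r/2$ for $r>k$.

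Step~(iii) is moreover only a plan: you yourself flag the vertical case $\ell(\la)>n+1-k$ as ``the technical heart'' and do not carry it out. The paper's route avoids both difficulties at once. It checks that the $\eta_r$ and $\eta'_k$ satisfy the presentation (\ref{stabrel1})--(\ref{stabrel2}) of the stable ring $\IH(\OG_k(\mathrm{even}))$, so that $\tau_r\mapsto\eta_r$, $\tau'_k\mapsto\eta'_k$ defines a surjection $\IH(\OG_k(\mathrm{even}))\to B^{(k)}$ carrying $\tau_\la$ to $\Eta_\la$ by the stable Giambelli formula; the decomposition $B^{(k)}_\Q=\Gamma^{(k)}_\Q\oplus e_k(y)\,\Gamma^{(k)}_\Q$ of Proposition \ref{basispr}, together with the known basis $\Ti_\la$ of $\Gamma^{(k)}$, shows the $\wh\Eta_\la$ and $\wt\Eta_\la$ (hence the $\Eta_\la$) are linearly independent, so this surjection is an isomorphism and the $\Eta_\la$ form a $\Z$-basis. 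The desired map to $\HH^*(\OG(n+1-k,2n+2),\Z)$ is then the natural projection from the stable ring, which sends $\tau_\la$ to $\tau_\la$ or to zero by the definition of the inverse system --- no separate vanishing argument for partitions outside the rectangle is required. To repair your proof you would need either to supply the relation check and the triangularity inputs (Proposition \ref{basispr} and the $\ti_\la$/$\Ti_\la$ bases of $\Gamma^{(k)}$), or to adopt the stable-ring identification.
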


Furthermore, we prove that the eta polynomial $\Eta_\la(x\,;y)$ is
equal to the type D Schubert polynomial $\DS_{w_\la}(x, y)$ of Billey
and Haiman \cite{BH} indexed by the corresponding $k$-Grassmannian
element $w_\la$ of the Weyl group of type D.  A general theorem of
loc.\ cit.\ shows that $\DS_{w_\la}$ can be written as a sum 
of products of type D Stanley symmetric functions
and type A Schubert polynomials. Lam \cite{La} has shown that the type
D Stanley symmetric functions are positive integer linear combinations
of Schur $P$-functions, where the coefficients count Kra\'skiewicz-Lam
tableaux. Using these results, we can express $\Eta_\la(x\,;y)$ as an
explicit positive linear combination of products of Schur
$P$-functions and $S$-polynomials (Theorem \ref{bhthm}). A different 
expression for $\Eta_\la(x\,;y)$, which writes it as a sum of monomials
$2^{n(U)}(xy)^U$ over all `typed $k'$-bitableaux' $U$ of shape $\la$, 
is obtained in \cite{T3}.

This paper is organized as follows.  Sections \ref{classpieri} and
\ref{Wpieriproof} are concerned with the proof of Theorem \ref{Chthm}.
Propositions \ref{prop+1} and \ref{prop-1} and Theorem \ref{mainthm}
are proved in section \ref{classgiam}.  The quantum Giambelli formula
(Theorem \ref{qgiam}) is established in section \ref{qgog}, where we
also give a quantum version of Proposition~\ref{prop-1}.  Section
\ref{etasec} develops the theory of eta polynomials and contains the
proof of Theorem \ref{productthm}. In section \ref{BH} we
show that the eta polynomials are equal to certain Billey-Haiman
Schubert polynomials of type D, and give some applications. Finally,
the appendix contains a detailed study of the Schubert varieties 
in orthogonal Grassmannians which justifies our claims about
the spaces $Y_\la$, and corrects a related error in \cite{BKT1}.

This project was completed in part during stays at the Hausdorff
Research Institute for Mathematics in Bonn and the Forschungsinstitut
Oberwolfach in 2011; we thank both institutions for their hospitality
and stimulating environments. We are grateful to Vijay Ravikumar for
pointing out that the definition of the Schubert varieties in \cite[\S
  3.1]{BKT1} and an earlier version of this paper is mistaken.

\section{The Pieri rule for Chern classes}
\label{classpieri}

In this section we let the integer $N$ have arbitrary parity and work
in the cohomology ring $\HH^*(\OG(m,N),\Z)$. Recall that the integer
$K=N-2m$ is equal to $2k$ or $2k+1$. Let $c_p=c_p(\cQ)$ be the $p$-th
Chern class of the universal quotient bundle $\cQ$ over $\OG$ and
define $Y_\la\subset\OG$ as in the introduction. We will formulate a
Pieri rule for the cup products $c_p\cdot [Y_\la]$ in $\HH^*(\OG,\Z)$.

We identify each partition $\la$ with its Young diagram of
boxes. Given two Young diagrams $\mu$ and $\nu$ with $\mu\subset\nu$,
the skew diagram $\nu/\mu$ is called a horizontal (resp.\ vertical)
strip if it does not contain two boxes in the same column (resp.\
row). We say that the boxes $[r,c]$ and $[r',c']$ in row
$r$ (resp.\ $r'$) and column $c$ (resp.\ $c'$) of $\lambda$ are {\em
$K$-related} if $c\leq k < c'$ and $c+c' = K+1+r-r'$. This notion
also makes sense for boxes outside the Young diagram of $\la$.

For any two $k$-strict partitions $\lambda$ and $\mu$, we write
$\lambda \to \mu$ if $\mu$ may be obtained by removing a vertical
strip from the first $k$ columns of $\lambda$ and adding a horizontal
strip to the result, so that

\medskip
\noin (1) if one of the first $k$ columns of $\mu$ has the same number
of boxes as the same column of $\lambda$, then the bottom box of this
column is $K$-related to at most one box of $\mu \smallsetminus
\lambda$; and

\medskip
\noin
(2) if a column of $\mu$ has fewer boxes than the same column of
$\lambda$, then the removed boxes and the bottom box of $\mu$ in this
column must each be $K$-related to exactly one box of $\mu
\smallsetminus \lambda$, and these boxes of $\mu \smallsetminus
\lambda$ must all lie in the same row.

\medskip

Let $\A$ be the set of boxes of $\mu\ssm \la$ in columns $k+1$ and
higher which are not mentioned in (1) or (2), and define
$N(\lambda,\mu)$ to be the number of connected components of $\A$.
Here two boxes are connected if they share at least a vertex.

We say that a $k$-strict partition $\la$ has {\em positive type} if
$\la_i=K/2$ for some index $i$ (note that this can only happen if $K$ 
is even, so equal to $2k$). Given two $k$-strict partitions
$\la$ and $\mu$ with $\la\to\mu$, define
\[
\wh{N}(\la,\mu) = \begin{cases}
N(\lambda,\mu)+1 & \text{if $\la$ has
positive type and $\mu$ does not}, \\
N(\la,\mu) & \text{otherwise}.
\end{cases}
\]
For any $k$-strict partition $\la$ and any integer $p\geq 1$, the
multiplication rule
\begin{equation}
\label{whYpierieq}
c_p \cdot [Y_\lambda] = \sum_{\lambda \to \mu, \,
  |\mu|=|\lambda|+p} 2^{\wh{N}(\lambda,\mu)} \, [Y_\mu]
\end{equation}
holds in $\HH^*(\OG,\Z)$. Indeed, when $N$ is odd, (\ref{whYpierieq})
is equivalent to the Pieri rule for odd orthogonal Grassmannians from
\cite[Thm.\ 2.1]{BKT1}. When $N$ is even, the result follows easily
from the Pieri rule for even orthogonal Grassmannians \cite[Thm.\
3.1]{BKT1}, which is recalled in \S \ref{classgiam}.

For any $k$-strict partition $\la$ contained in an $m \times
(n+k)$ rectangle, define a class $W_\la$ in the cohomology ring of 
$\OG(m,N)$ by
\[
W_\la = 2^{-\ell_k(\la)}R^{\la} \, c_{\la}.
\]
The next result extends \cite[Eqn.\ (14)]{BKT2} to include
the even values of $N$.

\begin{thm}
\label{whWpieri}
We have
\begin{equation}
\label{whWpierieq}
c_p \cdot W_\lambda = \sum_{\lambda \to \mu, \,
  |\mu|=|\lambda|+p} 2^{\wh{N}(\lambda,\mu)} \, W_\mu
\end{equation}
in $\HH^*(\OG,\Z)$. In other words, the cohomology classes 
$[Y_\la]$ and $W_\la$ satisfy the same Pieri rule for products with 
the Chern classes of $\cQ$.
\end{thm}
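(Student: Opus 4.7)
The plan is to prove Theorem \ref{whWpieri} as a formal identity of raising operators acting on the polynomial ring $\Z[c_1,c_2,\ldots]$ (with the $c_p$ treated as independent indeterminates), and then specialize to $\HH^*(\OG(m,N),\Z)$ by identifying each $c_p$ with the Chern class $c_p(\cQ)$. Because $W_\la = 2^{-\ell_k(\la)}R^\la c_\la$ is defined purely algebraically, a Pieri identity in the polynomial ring transports to any ring equipped with a suitable evaluation of the $c_p$.

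First, I would reproduce the framework of \cite[\S 2]{BKT2}, where the analogous identity is established for $K = 2k+1$. That argument expands $c_p\cdot R^\la c_\la = R^\la c_{(p,\la)}$ by formally distributing the infinite product $\prod(1-R_{ij})\prod_{\la_i+\la_j \geq K+j-i}(1+R_{ij})^{-1}$, regrouping monomials according to their target index sequence $\mu$, and matching the resulting coefficient with the normalized expansion $\sum 2^{N(\la,\mu)} R^\mu c_\mu$. The combinatorial multiplicity $2^{N(\la,\mu)}$ arises via cancellations on the connected components of the set $\A\subset\mu\smallsetminus\la$ of boxes in columns beyond $k$.

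Second, I would transplant this analysis to $K = 2k$. The notions of $K$-relatedness, horizontal and vertical strips, $\la\to\mu$, and $N(\la,\mu)$ depend on $K$ in a uniform way, and the raising-operator identities used in the odd case (comparing the product of $(1\pm R_{ij})$-factors associated to $\la$ with those associated to $\mu$) are combinatorial in nature and independent of the parity of $K$. In all cases where neither $\la$ nor $\mu$ has positive type (and in the symmetric case where both do), the computation parallels the odd case and produces multiplicity $2^{N(\la,\mu)}$ exactly.

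Third, I would handle the genuinely new phenomenon in the even case: when $\la$ has positive type (some $\la_d = k$) but $\mu$ does not. Here the row index $d$ with $\la_d=k$ contributes additional factors $(1+R_{dj})^{-1}$ in $R^\la$ that have no counterpart in $R^\mu$, because $\mu$ has no row of length exactly $k$. Isolating these factors and expanding $(1+R_{dj})^{-1}$ as a geometric series should produce exactly one extra doubling in the coefficient of $c_\mu$, matching the definition $\wh{N}(\la,\mu) = N(\la,\mu)+1$. The opposite case (only $\mu$ of positive type) is symmetric and absorbed into the normalization $2^{-\ell_k(\mu)}$, since then $\ell_k(\mu) = \ell_k(\la) + 1$.

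The main obstacle will be the bookkeeping in the positive-type analysis: verifying that the \emph{single} extra factor of $2$ appears in precisely the case $\la$ of positive type, $\mu$ not. I would handle this by splitting $R^\la = R^\la_d \cdot R^\la_{\hat d}$ into factors involving row $d$ and those not, doing the same for $R^\mu$, and performing an explicit finite-rank calculation on the ``$d$-factor'' alone to extract the correction. Once this is settled, the remainder of the argument is a routine extension of the odd-case manipulation, and the claimed identity $c_p \cdot W_\la = \sum_{\la\to\mu} 2^{\wh{N}(\la,\mu)} W_\mu$ drops out.
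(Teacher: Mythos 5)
There is a genuine gap at the foundation of your plan. The identity (\ref{whWpierieq}) is \emph{not} a formal identity in the free polynomial ring $\Z[c_1,c_2,\ldots]$, so the strategy of proving it there and then ``specializing'' to $\HH^*(\OG,\Z)$ cannot work. The proof must use the relations
\[
c_r^2 + 2\sum_{i=1}^r(-1)^i c_{r+i}c_{r-i}=0 \qquad (r>k),
\]
which hold for the Chern classes of $\cQ$ but of course not for independent indeterminates. A minimal counterexample: for $p>k$ take $\la=(p)$, so $W_\la=\tfrac12 c_p$ and $c_p\cdot W_\la=\tfrac12 c_p^2=\tfrac12 c_{(p,p)}$. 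The right-hand side of (\ref{whWpierieq}) is a combination of $W_\mu$ over $k$-strict $\mu$, and the two sides agree only after substituting $c_p^2=-2\sum_{i\ge1}(-1)^i c_{p+i}c_{p-i}$; in the free ring $c_{(p,p)}$ is an independent basis monomial and the identity fails. More structurally, when you expand $c_p\cdot R^\la c_\la$ you get terms $c_\nu$ indexed by compositions $\nu$ that are not partitions, and the cancellations that remove them (the analogues of Lemmas \ref{commuteA} and \ref{commuteC}, e.g.\ $T(D,(\la,r,r,\mu))=0$ for $r>k$) are consequences of the quadratic relations, not formal facts about raising operators. This is why the paper works in $\HH^*(\OG(m,2m+K),\Z)$ for large $m$ and runs a substitution algorithm whose individual steps are justified by those relations, terminating in a set $\Psi_0$ matching the Pieri terms and a set $\Psi_1$ that cancels under an involution.

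Your account of the positive-type correction is also off. If $\la_d=k$ then $\la_d\not>k$, so the pairs $(d,j)$ with $d<j$ do \emph{not} lie in $\cC(\la)$ and row $d$ contributes no factors $(1+R_{dj})^{-1}$ to $R^\la$; the extra doubling when $\la$ has positive type and $\mu$ does not arises instead from the component structure of the augmented set $\wh{\A}=\A\cup\{[m,k]\}$ (equivalently, from the case distinction $r(m+k+1)=m+1$ when $\la_m=K/2$), which produces one additional optional distinguished box and hence one more admissible choice of the set $S$ in the surviving $4$-tuples. Likewise, $\ell_k$ counts parts strictly greater than $k$, so the presence or absence of a part equal to $k$ in $\mu$ does not change $\ell_k(\mu)$, and the case ``only $\mu$ of positive type'' is not absorbed by the normalization in the way you describe (there is simply no correction there, since $\wh N(\la,\mu)=N(\la,\mu)$). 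To repair the proposal you would need to abandon the formal-identity framing, carry out the argument modulo the relations (\ref{kpresrels}), and track the multiplicity $2^{\wh N(\la,\mu)}$ through the component analysis rather than through the raising-operator factors attached to row $d$.
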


Observe that Theorem 1 follows easily from
Theorem \ref{whWpieri}.  To see this, write $\mu \succ \la$ if
$\mu$ strictly dominates $\lambda$, i.e., $\mu \neq \lambda$ and
$\mu_1 + \dots + \mu_i \geq \lambda_1 + \dots + \lambda_i$ for each $i
\geq 1$.  It follows from (\ref{whYpierieq}) and (\ref{whWpierieq}) that
\[
2^{\ell_k(\la)} W_\la + \sum_{\mu \succ \la} a_{\la\mu}\,W_\mu =
c_{\la_1}\cdots c_{\la_\ell} = 2^{\ell_k(\la)}[Y_\la] + \sum_{\mu
\succ \la} a_{\la\mu}\,[Y_\mu]
\]
for some constants $a_{\la\mu}\in\Z$. Using this and induction on
$\la$, we deduce that $W_\la = [Y_\la]$, for each $k$-strict
partition $\la$. We will prove Theorem \ref{whWpieri} (and hence
also Theorem 1) in the following section.

\section{Proof of Theorem \ref{whWpieri}}
\label{Wpieriproof}

Our proof of Theorem \ref{whWpieri} is almost identical to the proof
of \cite[Eqn.\ (14)]{BKT2}. We will not repeat the arguments of loc.\
cit.\ here, but will give an overview of the proof, pointing out where
it needs to be modified to include the even case $K=2k$.

\subsection{}
If $\la$ is any sequence of (possibly negative) integers, we say that
$\la$ has length $\ell$ if $\la_i=0$ for all $i>\ell$ and $\ell\geq 0$
is the smallest number with this property.  All integer sequences in
this paper have finite length.  In analogy with Young diagrams of
partitions, we will say that a pair $[i,j]$ is a {\em box} of the
integer sequence $\lambda$ if $i \geq 1$ and $1 \leq j \leq
\lambda_i$.  A {\em composition} $\al =
(\al_1,\al_2,\dots,\al_r,\ldots)$ is a sequence of integers from the
set $\N=\{0,1,2,\ldots\}$; we let $|\al| = \sum \al_i$.

Let $\Delta = \{(i,j) \in \N \times \N \mid 1\leq i \leq j \}$ and
define a partial order on $\Delta$ by agreeing that $(i',j')\leq
(i,j)$ if $i'\leq i$ and $j'\leq j$.  We call a finite subset $D$ of
$\Delta$ a {\em valid set of pairs} if $(i,j)\in D$ implies
$(i',j')\in D$ for all $(i',j')\in \Delta$ with $(i',j') \leq
(i,j)$. An {\em outer corner} of a valid set of pairs $D$ is a pair
$(i,j) \in \Delta \ssm D$ such that $D \cup (i,j)$ is also a valid set
of pairs.

\begin{defn}
\label{recursedef}
For any valid set of pairs $D$, we define the raising operator
\[
R^D = \prod_{i<j}(1-R_{ij})\prod_{i<j\, :\, (i,j)\in D}(1+R_{ij})^{-1}.
\]
For any finite monomial $R$ in the $R_{ij}$'s which appears in the 
expansion of $R^D$, and any integer sequence $\la$, we let 
$R\, c_\la = c_{R\la}$. We define the element $T(D,\la)$ in 
$\HH^*(\OG(m,N),\Z)$ by the formula 
\[
T(D,\la)= 2^{-\#\{i\,| \,(i,i)\in D\}} \, R^D c_{\la}.
\]
\end{defn}

It follows from \cite[Thms.\ 2.2 and 3.2]{BKT1} that the Chern
classes $c_r$ satisfy the relations
\begin{equation}
\label{kpresrels}
\frac{1-R_{12}}{1+R_{12}}\, c_{(r,r)} = 
c_r^2 + 2\sum_{i=1}^r(-1)^i c_{r+i}c_{r-i}= 0
\ \ \ \text{for} \  r > k.
\end{equation}
The next three lemmas are proved using the relations (\ref{kpresrels}) 
in the same way as their counterparts in \cite[Lemmas 1.2--1.4]{BKT2}.

\begin{lemma}\label{commuteA}
Let $\lambda=(\lambda_1,\ldots,\lambda_{j-1})$ and
$\mu=(\mu_{j+2},\ldots,\mu_\ell)$ be integer vectors.  Assume that
$(j,j+1)\notin D$ and that for each $h<j$, $(h,j)\in D$ if and only if
$(h,j+1)\in D$.  Then for any integers $r$ and $s$ we have
\[ T(D,(\lambda,r,s,\mu)) = - T(D,(\lambda,s-1,r+1,\mu)) \,.  \]
In particular, $T(D,(\lambda, r, r+1, \mu))=0$.
\end{lemma}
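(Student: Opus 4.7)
The plan is to reduce the asserted identity to a basic antisymmetry of the single factor $(1 - R_{j,j+1})$, after showing that the rest of $R^D$ is symmetric in the positions $j$ and $j+1$. The common prefactor $2^{-\#\{i\,|\,(i,i)\in D\}}$ is the same on both sides, so it is enough to prove $R^D c_{(\lambda,r,s,\mu)} = -R^D c_{(\lambda,s-1,r+1,\mu)}$.

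First I would factor $R^D = (1 - R_{j,j+1})\,Q$, where $Q$ collects the remaining factors $(1-R_{i'j'})$ and $(1+R_{i'j'})^{-1}$. This is legitimate because the raising operators mutually commute and, since $(j,j+1)\notin D$, there is no $(1+R_{j,j+1})^{-1}$ to cancel the singled-out factor.

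The key claim, and the only place where hypotheses are used, is that $Q$ is invariant under swapping positions $j$ and $j+1$. For pairs $(h,j)$ and $(h,j+1)$ with $h<j$ this is exactly the stated assumption: the numerator factors $(1-R_{h,j})$ and $(1-R_{h,j+1})$ are both always present, and the inverse factors appear simultaneously or not at all. For pairs $(j,k)$ and $(j+1,k)$ with $k>j+1$ the symmetry is automatic: validity of $D$ together with $(j,j+1)\notin D$ forces both to be absent from $D$, because $(j,j+1)\le(j,k)$ and $(j,j+1)\le(j+1,k)$ in $\Delta$, so downward closure would otherwise put $(j,j+1)$ into $D$. Hence $Q$ contains only the symmetric numerator pair $(1-R_{j,k})(1-R_{j+1,k})$ in each such column. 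Expanding $Q$ as a sum of raising-operator monomials and pairing each monomial with the one obtained by swapping the subscript indices $j\leftrightarrow j+1$, commutativity of the Chern classes then yields the polynomial identity $Q c_{(\lambda,r,s,\mu)} = Q c_{(\lambda,s,r,\mu)}$.

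Finally I would compute
\[
R^D c_{(\lambda,r,s,\mu)} + R^D c_{(\lambda,s-1,r+1,\mu)} = Q c_{(\lambda,r,s,\mu)} - Q c_{(\lambda,r+1,s-1,\mu)} + Q c_{(\lambda,s-1,r+1,\mu)} - Q c_{(\lambda,s,r,\mu)},
\]
and cancel the four terms in pairs using $Q c_{(\lambda,r,s,\mu)} = Q c_{(\lambda,s,r,\mu)}$ and $Q c_{(\lambda,s-1,r+1,\mu)} = Q c_{(\lambda,r+1,s-1,\mu)}$. For the ``in particular'' statement, specializing $s=r+1$ gives $T(D,(\lambda,r,r+1,\mu)) = -T(D,(\lambda,r,r+1,\mu))$, and torsion-freeness of $\HH^*(\OG,\Z)$ forces the class to vanish. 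The main technical point is the symmetry step — in particular, recognizing that the validity hypothesis on $D$ already rules out the pairs $(j,k)$ and $(j+1,k)$ for $k>j+1$, so that the lemma's explicit hypothesis (only about $h<j$) is indeed sufficient.
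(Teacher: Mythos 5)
Your proof is correct and is essentially the argument the paper invokes (it defers to [BKT2, Lemma 1.2]): factor out $(1-R_{j,j+1})$, observe that the remaining operator is symmetric in the indices $j$ and $j+1$ — using the stated hypothesis for $h<j$ and downward-closure of $D$ to rule out $(j,t),(j+1,t)$ for $t>j+1$ — and cancel the four resulting terms in pairs. Your observation that this particular lemma is a purely formal antisymmetry, not actually requiring the relations (\ref{kpresrels}), is accurate.
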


\begin{lemma}\label{commuteC}
Let $\lambda=(\lambda_1,\ldots,\lambda_{j-1})$ and
$\mu=(\mu_{j+2},\ldots,\mu_\ell)$ be integer vectors, assume
$(j,j+1)\in D$, and that for each $h>j+1$, $(j,h)\in D$ if and only if
$(j+1,h)\in D$.  If $r,s\in \Z$ are such that $r+s > 2k$, then we have
\[ T(D,(\lambda,r,s,\mu)) = - T(D,(\lambda,s,r,\mu)) \,.  \]
In particular, $T(D,(\lambda,r,r,\mu)) = 0$ for any $r>k$.
\end{lemma}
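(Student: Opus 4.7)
The plan is to use commutativity of the raising operators to isolate the factor corresponding to the pair $(j,j+1)$. Since $(j,j+1)\in D$, I would write
\[
R^D = \frac{1-R_{j,j+1}}{1+R_{j,j+1}}\cdot R^{D''},
\]
where $R^{D''}$ denotes the product of the remaining factors in $R^D$; the two pieces commute because all $R_{ij}$'s do.

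The first key identity I would establish, purely from the relations \eqref{kpresrels}, asserts that
\[
\frac{1-R_{j,j+1}}{1+R_{j,j+1}}\bigl(c_{(\la,r,s,\mu)}+c_{(\la,s,r,\mu)}\bigr) = 0 \qquad \text{in }\HH^*(\OG(m,N),\Z)
\]
whenever $r+s>2k$. I would prove this by expanding $\frac{1-R}{1+R}=1-2R+2R^2-\cdots$, combining the two contributions, and invoking commutativity of the Chern classes to reduce the combined sum (up to an overall $\pm c_\la c_\mu$) to $\sum_{p+q=r+s}(-1)^p c_p c_q$. The latter vanishes trivially for $r+s$ odd by pairing $p\leftrightarrow q$, and for $r+s=2R$ even it is a scalar multiple of the quadratic relation $c_R^2+2\sum_{i=1}^R(-1)^ic_{R+i}c_{R-i}=0$ of \eqref{kpresrels}, which holds since $R>k$.

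Applying $R^{D''}$ and using commutativity yields
\[
T(D,(\la,r,s,\mu))+T(D,(\la,s,r,\mu)) = 2^{-\#\{i : (i,i)\in D\}}\cdot \frac{1-R_{j,j+1}}{1+R_{j,j+1}}\cdot R^{D''}\bigl(c_{(\la,r,s,\mu)}+c_{(\la,s,r,\mu)}\bigr),
\]
so the task reduces to showing this vanishes in cohomology. Here the hypothesis $(j,h)\in D\Leftrightarrow(j+1,h)\in D$ for $h>j+1$ is used: it arranges the factors of $R^{D''}$ involving indices past $j+1$ symmetrically under the swap of positions $j$ and $j+1$, so that each monomial in the expansion, when followed by the outer factor $(1-R_{j,j+1})/(1+R_{j,j+1})$, can be paired with its swap-partner and the resulting combined contribution fits the vanishing pattern of the single-variable identity above.

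The main obstacle is this final bookkeeping, because $R^{D''}$ is a formal operator on integer sequences and not a morphism of cohomology classes, so the cohomological vanishing of the inner factor does not transport automatically; one must match terms in the expansion in accordance with the prescribed symmetry. I would model the argument precisely on the proof of \cite[Lemma~1.3]{BKT2}, which handles the analogous statement for odd orthogonal Grassmannians using the same quadratic relations \eqref{kpresrels}; only superficial changes are needed since the relations take the identical form in both cases. The in-particular assertion $T(D,(\la,r,r,\mu))=0$ for $r>k$ then follows by taking $s=r$.
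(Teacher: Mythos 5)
Your proposal is correct and takes essentially the same route as the paper, which proves this lemma simply by invoking the relations (\ref{kpresrels}) and the argument of \cite[Lemma 1.3]{BKT2}. Your reduction of $\frac{1-R_{j,j+1}}{1+R_{j,j+1}}\bigl(c_{(\la,r,s,\mu)}+c_{(\la,s,r,\mu)}\bigr)$ to the alternating sum $\sum_{p+q=r+s}(-1)^p c_p c_q$ --- which cancels formally for $r+s$ odd and is a multiple of the quadratic relation for $R=(r+s)/2>k$ when $r+s$ is even --- correctly isolates where the hypothesis $r+s>2k$ enters, and the remaining monomial-pairing bookkeeping is exactly what the paper likewise delegates to loc.\ cit.
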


\begin{lemma}
\label{easylm}
If $(i,j)\notin D$ and $D\cup (i,j)$ is a valid set of pairs, then
\[
T(D, \lambda) = T(D\cup (i,j), \lambda) +
T(D\cup (i,j), R_{ij}\lambda).
\]
\end{lemma}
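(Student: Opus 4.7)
The identity is a direct unpacking of the definition of $T(D,\la)$; my plan is to split the verification into the two cases permitted by the shape of $\Delta$, namely $i<j$ (outer corner strictly above the diagonal) and $i=j$ (outer corner on the diagonal).

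In the principal case $i<j$, the sets of diagonal pairs in $D$ and $D\cup(i,j)$ agree, so the two prefactors $2^{-\#\{l\mid (l,l)\in D\}}$ coincide. Enlarging $D$ by $(i,j)$ inserts exactly one new factor $(1+R_{ij})^{-1}$ into the second product defining $R^D$. Since the operators $R_{pq}$ all commute, we may factor it out and record this as
\[
R^D \;=\; R^{D\cup(i,j)}\,(1+R_{ij}).
\]
Applying both sides to $c_\la$ and using $(1+R_{ij})c_\la = c_\la + c_{R_{ij}\la}$ yields the desired identity
\[
T(D,\la) \;=\; T(D\cup(i,j),\la) + T(D\cup(i,j),R_{ij}\la).
\]

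In the diagonal case $i=j$, the pair $(i,i)$ does not enter either product in the definition of $R^D$ (both are indexed by strict inequalities $p<q$), so $R^{D\cup(i,i)}=R^D$; only the diagonal count is affected, increasing by one and thus halving the prefactor. With the natural convention that $R_{ii}$ is the identity operator --- consistent with the defining formula $R_{pq}(\al)=(\ldots,\al_p+1,\ldots,\al_q-1,\ldots)$ --- the two summands on the right-hand side coincide, giving $2\,T(D\cup(i,i),\la)$, and the factor of two cancels the halving to recover $T(D,\la)$.

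No substantial obstacle is expected: the case $i<j$ is a one-line algebraic rearrangement of the definition, and the diagonal case is pure bookkeeping once one fixes the natural reading of $R_{ii}$. The main point worth flagging in a writeup is the compatibility of the commutation $R^D = R^{D\cup(i,j)}(1+R_{ij})$ with the prefactor tracking, which is exactly what makes the $2^{-\#\{l\mid(l,l)\in D\}}$ normalization in Definition~\ref{recursedef} well-suited to the even case.
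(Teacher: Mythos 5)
Your proof is correct and takes essentially the same route as the paper, which proves this lemma by the direct computation of \cite[Lemma 1.4]{BKT2}: for $i<j$ one factors $R^{D}=R^{D\cup(i,j)}(1+R_{ij})$ with unchanged diagonal count, and for $i=j$ one uses $R^{D\cup(i,i)}=R^{D}$, $R_{ii}\la=\la$, and the halving of the prefactor $2^{-\#\{l\,\mid\,(l,l)\in D\}}$. No gaps.
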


\subsection{}
\label{initnot}
Throughout the rest of this section we fix $K$, $p>0$, and the
$k$-strict partition $\la$ of length $\ell$.  We will work in the ring
$\HH^*(\OG,\Z)$, where $\OG = \OG(m,2m+K)$ for a sufficiently large
integer $m$. Define a valid set of pairs $\cC=\cC(\la)$ by
\[
\cC(\la) = \{(i,j)\in\Delta\ |\ \la_i+\la_j \geq K+j-i, \ \la_i > k, 
\ \, \text{and} \ \, j \leq \ell\}.
\]
Notice that $W_\la = T(\cC,\la)$. For any $d\geq \ell$ define the
raising operator $R^{\la}_d$ by
\[
R_d^{\la} = \prod_{1\leq i<j\leq d}(1-R_{ij})\,
\prod_{i<j\, :\, (i,j)\in\cC} (1+R_{ij})^{-1}.
\]
We compute that
\[
c_p\cdot W_\la = c_p\cdot 2^{-\ell_k(\la)}R_{\ell}^{\la}\, c_{\la} =
2^{-\ell_k(\la)} R_{\ell+1}^\la\cdot
\prod_{i=1}^\ell(1-R_{i,\ell+1})^{-1} \, c_{\la,p}
\]
\[
= 2^{-\ell_k(\la)}
R^\la_{\ell+1}\cdot\prod_{i=1}^\ell(1+R_{i,\ell+1} +
R_{i,\ell+1}^2 + \cdots)\,c_{\la,p}
\]
and therefore
\begin{equation}\label{initeq}
  c_p\cdot T(\cC,\la) = \sum_{\nu\in \cN}T(\cC,\nu),
\end{equation}
where $\cN=\cN(\la,p)$ is the set of all compositions $\nu \geq \la$
such that $|\nu| = |\la|+p$ and $\nu_j =0$ for $j > \ell+1$. 

\subsection{}\label{initialdefs}

We will prove that the right hand side of equation (\ref{initeq}) is
equal to the right hand side of the Pieri rule (\ref{whWpierieq}),
thus proving Theorem \ref{whWpieri}.  For the rest of this section we
set $m = \ell_k(\la)+1$, i.e.\ $m$ is minimal such that $\la_m \leq
k$.  We call $m$ the {\em middle row\/} of $\la$.

\begin{defn}
  A {\em valid 4-tuple} of {\em level} $h$ is a 4-tuple $\psi =
  (D,\mu,S,h)$, such that $h$ is an integer with $0 \leq h \leq
  \ell+1$, $D$ is a valid set of pairs containing $\cC$, all pairs
  $(i,j)$ in $D$ satisfy $i\leq m$ and $j \leq \ell+1$, $S$ is a
  subset of $D \ssm \cC$, and $\mu$ is an integer sequence of length
  at most $\ell+1$.  The evaluation of $\psi$ is defined by $\ev(\psi)
  = T(D,\mu) \in \HH^*(\OG,\Z)$.
\end{defn}

In the following we set $\mu_0 = \infty$ whenever $\mu$ is an integer
sequence.

\begin{defn}
  For any $y \in \Z$ we let $r(y)$ denote the largest integer such
  that $r(y) \leq \ell+1$ and $\la_{r(y)-1} \geq K+r(y)-y$.
\end{defn}

We  have the relation
$\cC = \{ (i,j) \in \Delta \mid
   j < r(i+\la_i+1) \ \, \text{and} \ \, \la_i > k \}$.
Notice also that $r(m+k)=m$ while
\begin{equation}
\label{rmk}
r(m+k+1)=
\begin{cases}
m+1 & \text{if $\la_m=K/2$}, \\
m & \text{otherwise}. 
\end{cases}
\end{equation}

\begin{defn}\label{efg}
  Let $h\in \N$ satisfy $1\leq h \leq m$ and let $\mu$ be an integer
  sequence.\smallskip

  \noindent
  (a) We define $b_h = r(h+\la_h+1)$ and $g_h = b_{h-1}$.  By
  convention we set $g_1 = \ell+1$.  \smallskip

  \noindent
  (b) Set $R(\mu) = \{ [i,c] \in \mu\ssm\la \mid c>k \text{ and }
  \mu_{r(i+c)} < K+r(i+c)-i-c \}$.
  \smallskip

  \noindent
  (c) Assume that $h \geq 2$ and $\mu_h \geq \la_{h-1}$.  If
  $[h,\la_{h-1}] \in R(\mu)$ then set $e_h(\mu) = \la_{h-1}$.
  Otherwise, if $h<m$ (respectively, if $h=m$) choose $e_h(\mu) >
  \la_h$ (respectively, $e_h(\mu) \geq K/2$) minimal such that $[h,c]
  \not\in R(\mu)$ for $e_h(\mu) \leq c \leq \la_{h-1}$.  Finally, set
  $f_h(\mu) = r(h+e_h(\mu))$.
\end{defn}

If $\mu$ is a $k$-strict partition such that $\la \to \mu$, then the
set $\A$ from \S \ref{classpieri} consists of the boxes of
$\mu\ssm\la$ in columns $k+1$ and higher which are not in $R(\mu)$.
For a general sequence $\mu$, when $[h,\la_{h-1}] \notin R(\mu)$, the
integer $e_h(\mu)$ is the least such that $e_h(\mu) \geq K/2$,
$(e_h(\mu),\la_h)$ is a $k$-strict partition, and $[h,c] \not\in
R(\mu)$ for $e_h(\mu) \leq c \leq \la_{h-1}$.

If we are given a fixed valid 4-tuple $(D,\mu,S,h)$ with $1\leq h \leq
m$, we will use the shorthand notation $b = b_h$, $g=g_h$, $R =
R(\mu)$, $e = e_h(\mu)$, and $f = f_h(\mu)$.

\begin{defn}\label{wvdef}
  Let $(i,j)\in \Delta$ be arbitrary.
  We define two conditions $\W(i,j)$ and $\X$ on a valid $4$-tuple
  $(D,\mu,S,h)$ as follows.
  \[
  \W(i,j)\ :\ \mu_i+\mu_j \geq K + j-i \ \, \text{and} \ \, \mu_i > k.
  \]
  Condition $\X$ is true if and only if $(h,h) \in D$ and
  \[
  \mu_h \geq \mu_{h-1} \  {\mathrm{or}}  \ \mu_h > \la_{h-1} \
  {\mathrm{or}} \ (\mu_h = \la_{h-1} \ {\mathrm{and}}  \
  (h,\f)\notin S) \,.
  \]
\end{defn}

\subsection{}\label{ss:subrule}

 The following {\em substitution rule} will be applied iteratively to
rewrite the right hand side of (\ref{initeq}). Both this rule and the
algorithm which follows it are identical to the one in \cite[\S
3.3]{BKT2}, but we recall them here for the sake of exposition. 

\medskip

\begin{center}
{\bf \underline{Substitution Rule}}
\end{center}

\medskip
\medskip

Let $(D,\mu,S,h)$ be a valid 4-tuple of level $h\geq 1$.  Assume first
that $(h,h)\notin D$.  If

\medskip
\begin{center}
{\bf (i)} there is an outer corner $(i,h)$ of $D$ with $i\leq m$
such that $\W(i,h)$ holds
\end{center}

\medskip
\noin
then REPLACE $(D,\mu,S,h)$ with
\[
(D \cup (i,h),\mu,S,h) \ \ \mathrm{and} \ \ (D \cup (i,h),R_{ih}\mu,
S\cup (i,h), h).
\]

\noin
Otherwise, if

\medskip
\begin{center}
{\bf (ii)} $D$ has no outer corner in column $h$ and $\mu_h > \la_{h-1}$,
\end{center}

\medskip
\noin
then STOP.

\medskip
Assume now that $(h,h)\in D$.  If

\medskip
\begin{center}
{\bf (iii)} there is an outer corner $(h,j)$ of $D$ with $j \leq \ell+1$
such that $\W(h,j)$ holds,
\end{center}

\medskip
\noin
then REPLACE $(D,\mu,S,h)$ with
\[
\begin{cases}
(D \cup (h,j),\mu,S,h)\ \ \mathrm{and} \ \ (D\cup (h,j),R_{hj}\mu, S\cup
(h,j),h) & \mathrm{if} \ \mu_j \leq \mu_{j-1}, \\
(D\cup (h,j),R_{hj}\mu, S\cup(h,j),h) & \mathrm{if} \
\mu_j > \mu_{j-1}.
\end{cases}
\]
Otherwise, if

\medskip
\begin{center}
  {\bf (iv)} $\W(h,\g)$ or $\X$ holds, and $D$ has an
  outer corner $(i,g)$ with $i \leq h$,
\end{center}

\medskip
\noin
then REPLACE $(D,\mu,S,h)$ with
\[
(D \cup (i,\g),\mu,S,h) \ \ \mathrm{and} \ \ (D \cup (i,\g),R_{i\g}\mu,
S\cup (i,\g), h).
\]
Otherwise, if

\medskip
\begin{center}
{\bf (v)}  $\X$ holds,
\end{center}

\medskip
\noin
then STOP.

\medskip

If  none of the above conditions hold, REPLACE
$(D,\mu,S,h)$ with $(D,\mu,S,h-1)$.

\subsection{}

Define the set $\Psi = \{(\cC,\nu,\emptyset,\ell+1) \mid \nu\in
\cN(\la,p)\}$, so that $\sum_{\psi\in\Psi}\ev(\psi)$ agrees with the
right hand side of (\ref{initeq}). Consider the following algorithm
which will change $\Psi$ by replacing some $4$-tuples with one or two
new valid $4$-tuples.  The algorithm applies the Substitution Rule to
each element $(D,\mu,S,h)$ of level $h \geq 1$.  If the substitution
rule results in a REPLACE statement, then the set is changed by
replacing $(D,\mu,S,h)$ by one or two new 4-tuples accordingly;
otherwise the substitution rule results in a STOP statement, and the
$4$-tuple $(D,\mu,S,h)$ is left untouched.  These substitutions are
iterated until no further elements can be REPLACED.

Suppose that the $4$-tuple $\psi=(D,\mu,S,h)$ occurs in the
algorithm. If $\psi$ is replaced by two 4-tuples $\psi_1$ and
$\psi_2$, then it follows from Lemma \ref{easylm} that $\ev(\psi) =
\ev(\psi_1) + \ev(\psi_2)$.  Moreover, if $\psi$ meets {\bf (iii)} and
is replaced by the single 4-tuple $\psi'=(D\cup (h,j),R_{hj}\mu,
S\cup(h,j),h)$, then one can show that $\mu_j=\mu_{j-1}+1$ and $D\cup
(h,j)$ has no outer corner in column $j$, so Lemmas \ref{commuteA} and
\ref{easylm} imply that $\ev(\psi)=\ev(\psi')$.

When the algorithm terminates, let $\Psi_0$ (respectively $\Psi_1$)
denote the collection of all $4$-tuples $(D,\mu,S,h)$ in the final set
such that $h=0$ (respectively $h>0$). We deduce from the above
analysis that
\[
\sum_{\nu\in \cN}T(\cC,\nu) = \sum_{\psi\in\Psi_0}\ev(\psi) +
\sum_{\psi\in\Psi_1}\ev(\psi).
\]

\begin{claim}
\label{claim1}
For each 4-tuple $\psi = (D,\mu,S,0)$ in $\Psi_0$ with $\mu_{\ell+1}
\geq 0$, $\mu$ is a $k$-strict partition with $\lambda \to \mu$, 
\[
D = \cC_{\ell+1}(\mu) := \{(i,j)\in\Delta\ |\ \mu_i+\mu_j 
\geq K+j-i, \ \, \mu_i>k, 
\ \, \text{and} \ \, j \leq \ell+1\}
\]
is uniquely determined by $\mu$, and $\ev(\psi) =
T(\cC(\mu),\mu)$. Furthermore, for each such partition
$\mu$, there are exactly $2^{\wh{N}(\lambda,\mu)}$ such 4-tuples
$\psi$, in accordance with the Pieri rule (\ref{whWpierieq}). 
\end{claim}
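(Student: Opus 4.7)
The plan is to prove Claim~\ref{claim1} by following the algorithmic analysis used in the odd case of \cite{BKT2}, while isolating the new features arising when $K = 2k$. I would process the algorithm level by level in descending order $h = \ell+1, \ldots, 1$, tracking how the STOP and REPLACE clauses shape the surviving 4-tuples, and then match the terminal multiplicities to $2^{\wh N(\la,\mu)}$.

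First, by descending induction on $h$, I would establish an invariant for any 4-tuple $(D,\mu,S,h)$ that has been fully processed at level $h+1$ and decremented: (a) $D$ contains $\cC$ together with all pairs $(i,j) \in \cC_{\ell+1}(\mu)$ with $j > h$, and every pair in $D$ satisfies condition $\W$; (b) the entries of $\mu$ in positions $> h$ are consistent with $\mu$ being a $k$-strict partition and with the skew boxes in those rows decomposing as required by conditions (1) and (2) of $\la \to \mu$; (c) $S$ records precisely the subset of $D \ssm \cC$ added by the second ($R_{ij}$-modifying) branch of the splitting REPLACE moves. The STOP clauses then translate into row-$h$ requirements: failure of (ii) at a level $h > m$ with $(h,h) \notin D$ forces $\mu_h \leq \la_{h-1}$, encoding the horizontal strip above the middle row, while failure of (v) for $h \leq m$ with $(h,h) \in D$, combined with the definitions of $e_h$, $f_h$ and condition $\X$, forces the boxes in row $h$ beyond column $e_h(\mu)$ to be $K$-related in the prescribed way. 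Hence each terminal 4-tuple in $\Psi_0$ has $\mu$ a $k$-strict partition satisfying $\la \to \mu$ and $D = \cC_{\ell+1}(\mu)$; the equality $\ev(\psi) = T(\cC(\mu),\mu)$ then follows, with Lemma~\ref{commuteA} used to shed the trailing entry when $\mu_{\ell+1} = 0$.

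Second, for the multiplicity I would track each REPLACE that genuinely forks into two branches: case (i), case (iii) with $\mu_j \leq \mu_{j-1}$, and case (iv). Each such fork realizes the dichotomy of Lemma~\ref{easylm}, with the first summand $T(D\cup(i,j),\mu)$ accounting for a new box of $\mu \ssm \la$ that is not $K$-related across $(i,j)$, and the second, $T(D\cup(i,j), R_{ij}\mu)$, for one that is. Each connected component of the set $\A$ of boxes in $\mu \ssm \la$ in columns $> k$ not absorbed by conditions (1) or (2) arises from exactly one such independent binary fork, contributing the factor $2^{N(\la,\mu)}$.

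Finally, the additional factor of $2$ in $\wh N(\la,\mu) = N(\la,\mu) + 1$ when $\la$ has positive type but $\mu$ does not is the essentially new phenomenon in the even case $K = 2k$. It arises at the middle row $m$: because Definition~\ref{efg}(c) only requires $e_m(\mu) \geq K/2 = k$, and because equation~(\ref{rmk}) gives $r(m+k+1) = m+1$ exactly when $\la_m = K/2$, the algorithm at level $m$ admits one extra binary choice between $e_m(\mu) = k$ and $e_m(\mu) > k$; both branches produce the same terminal $(D,\mu)$ when $\la_m = k$ but $\mu_m > k$. The main obstacle will be to carry out this bookkeeping at level $m$ carefully enough to confirm both that this extra branching is independent of the other $\A$-components, yielding the total count $2 \cdot 2^{N(\la,\mu)}$, and that it does not produce spurious 4-tuples in the remaining cases (both $\la$ and $\mu$ having positive type, or neither having positive type), in which case $\wh N = N$ as required.
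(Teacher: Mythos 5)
Your overall architecture --- descending induction on the level $h$, invariants tying $(D,\mu,S)$ to $\cC_{\ell+1}(\mu)$ and to the defining conditions of $\la\to\mu$, then a count of the surviving sets $S$ --- is the right one, and it is essentially how the paper proceeds (the paper defers that part to \cite[\S 4]{BKT2} with $2k+1$ replaced by $K$, and only writes out the new counting argument). The genuine gap is in your third paragraph, at exactly the point that is new for $K=2k$. You attribute the extra factor of $2$ in $\wh{N}(\la,\mu)=N(\la,\mu)+1$ to ``one extra binary choice between $e_m(\mu)=k$ and $e_m(\mu)>k$.'' But $e_m(\mu)$ is not a choice the algorithm makes: Definition \ref{efg}(c) takes the \emph{minimal} admissible value, so $e_m(\mu)$ is a deterministic function of $\mu$, and no branching occurs there. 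The only genuine forks are the REPLACE statements, which adjoin a pair to $D$ and optionally to $S$; terminal $4$-tuples contributing to the same $\mu$ differ only in $S$. The relevant fork at the middle row is the one adjoining the diagonal pair $(m,m)$ (case (i) with $i=h=m$, available exactly when $\mu_m>k$): since $R_{mm}\mu=\mu$, its two branches have identical $(D,\mu)$ and differ only by whether $(m,m)\in S$, and whether \emph{both} branches descend to level $0$ rather than being stopped into $\Psi_1$ and cancelled is governed by condition $\X$ through $f_m(\mu)=r(m+e_m(\mu))$ and equation (\ref{rmk}). That is where $\la_m=K/2$ enters --- not through any indeterminacy in $e_m$.

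In addition, your case split ``$\la_m=k$ but $\mu_m>k$'' does not match the condition for $\wh{N}=N+1$, namely that $\la$ has positive type and $\mu$ does not: for instance $\la_m=k$ with $\mu_m<k$ also forces $\wh{N}=N+1$ (no part of $\mu$ can then equal $k$), while $\la_m=k<\mu_m$ with $\mu_j=k$ for some $j>m$ gives $\wh{N}=N$. The paper resolves all of this at once by replacing $\A$ with $\wh{\A}$ (adding the box $[m,k]$ when $\la_m=K/2<\mu_m$), showing that $\wh{N}(\la,\mu)$ equals the number of optional distinguished boxes of $\wh{\A}$, attaching to each distinguished box $[i,c]$ the pair $(i,r(i+c))$ (so that $(m,m)$ is attached to $[m,k]$ or $[m,k+1]$), and proving that the terminal sets $S$ for a given $\mu$ are exactly the sets $E'\cup F\cup G$ with $E'$ ranging over subsets of the optional pairs. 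You would need to supply this identification (or an equivalent one) to close the argument; as written, the mechanism you propose for the factor of $2$ does not exist in the algorithm. Relatedly, your second paragraph's assertion that each component of $\A$ ``arises from exactly one independent binary fork'' is only a heuristic: for $i<j$ the two branches of a fork carry different $\mu$'s, so the multiplicity must be extracted from the characterization of admissible $S$, not from independence of forks.
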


Given a valid 4-tuple $\psi = (D,\mu,S,h)$ with $h \geq 2$ and $\mu_h
\geq \la_{h-1}$, we define a new 4-tuple $\iota\psi$ as follows.  If
$(h,h) \notin D$, then set $\iota\psi = (D,\wt\mu,S,h)$, where the
composition $\wt\mu$ is defined by $\wt\mu_{h-1} = \mu_h-1$,
$\wt\mu_h = \mu_{h-1}+1$, and $\wt\mu_t = \mu_t$ for $t \notin
\{h-1,h\}$.  If $(h,h) \in D$ and $\mu_{h-1} = \mu_h$, then set
$\iota\psi = \psi$.  Assume that $(h,h) \in D$ and $\mu_{h-1} \neq
\mu_h$.  Let $\varpi$ be the involution of $\Delta$ that exchanges
$(h-1,g)$ with $(h,f)$, and fixes all other pairs.  Then set
$\iota\psi = (D,\wt\mu,\wt S,h)$, where $\wt S = \varpi(S)$,
and $\wt\mu$ is the composition obtained from $\mu$ by switching
the parts $\mu_{h-1}$ and $\mu_h$.

\begin{claim}
\label{claim2}
The above map $\iota$ restricts to an involution $\Psi_1 \to\Psi_1$
such that $\ev(\psi) + \ev(\iota(\psi)) = 0$, for every
$\psi\in\Psi_1$.
\end{claim}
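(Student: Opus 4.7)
The plan is to prove Claim~\ref{claim2} by case analysis on the STOP condition that terminated the algorithm at $\psi = (D,\mu,S,h) \in \Psi_1$. Since $h \geq 1$ and the substitution rule did not advance $\psi$ to a smaller level, either condition (ii) (when $(h,h) \notin D$) or condition (v) (when $(h,h) \in D$) held. In each case I apply one of the commutation identities from Lemma~\ref{commuteA} or Lemma~\ref{commuteC} to transpose the entries $\mu_{h-1}$ and $\mu_h$, producing the sign-reversing behavior of $\iota$.

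If $(h,h) \notin D$, then STOP (ii) applied: $D$ has no outer corner in column $h$ and $\mu_h > \la_{h-1}$. Validity of $D$ together with the absence of outer corners in column $h$ forces $(i,h-1) \in D \Leftrightarrow (i,h) \in D$ for every $i < h-1$, while $(h,h) \notin D$ gives $(h-1,h) \notin D$. Lemma~\ref{commuteA} applied at $j = h-1$ then yields $T(D,\mu) = -T(D,\wt\mu)$ for the sequence $\wt\mu$ used in the definition of $\iota$. To conclude $\iota\psi \in \Psi_1$ I verify that $\wt\mu_h = \mu_{h-1}+1 > \la_{h-1}$, which uses the algorithmic invariant $\mu_{h-1} \geq \la_{h-1}$ maintained throughout the algorithm. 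The outer-corner structure of $D$ in column $h$ is unchanged, so $\iota\psi$ again satisfies STOP (ii), and $\iota^2\psi = \psi$ is immediate from the definition of $\wt\mu$.

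If $(h,h) \in D$, STOP (v) applied via condition $\X$. Validity forces $(h-1,h) \in D$, and a loop invariant (inherited from the fact that rule (iv) always enlarges $D$ symmetrically in the two relevant rows) gives $(h-1,j) \in D \Leftrightarrow (h,j) \in D$ for all $j > h$, as well as $\mu_h > k$ whenever $(h,h) \in D$. When $\mu_{h-1} = \mu_h$ the map $\iota$ fixes $\psi$, and Lemma~\ref{commuteC} in its ``in particular'' form forces $T(D,\mu) = 0$; hence $\ev(\psi) + \ev(\iota\psi) = 0$. When $\mu_{h-1} \neq \mu_h$, Lemma~\ref{commuteC} applies at $j = h-1$ to give $T(D,\mu) = -T(D,\wt\mu)$ for $\wt\mu$ obtained by swapping $\mu_{h-1}$ and $\mu_h$. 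The involution $\varpi$ acts only on the bookkeeping set $S \subset D \ssm \cC$ and does not affect the evaluation, so $\ev(\iota\psi) = T(D,\wt\mu)$. I will verify that $\iota\psi$ satisfies $\X$: the three clauses of $\X$ for $\mu$ interchange in a controlled way with the clauses for $\wt\mu$, and the third clause involving membership in $S$ is precisely what $\varpi$ transposes. Involutivity then follows from $\varpi^2 = \mathrm{id}$.

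The main obstacle in all cases is to establish the algorithmic invariants used above: the compatibility equivalences on $D$ in the rows or columns flanking $h$, and the positivity statements $\mu_{h-1} \geq \la_{h-1}$ and $\mu_h > k$. These are proved by induction on the iterations of the substitution rule, inspecting how each of the REPLACE cases (i)--(iv) affects the pairs on either side of the index $h$. Once the invariants are in place, sign reversal and involutivity follow formally from Lemmas~\ref{commuteA} and~\ref{commuteC}, as in the parallel argument in \cite[\S 3]{BKT2}; the only genuinely new point is that with $K = 2k$ the threshold ``$\mu_h > k$'' already suffices for Lemma~\ref{commuteC}, so the proof adapts essentially verbatim from the odd case.
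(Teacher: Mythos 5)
Your overall strategy --- split on whether $(h,h)\in D$, invoke Lemma~\ref{commuteA} or Lemma~\ref{commuteC} to get $\ev(\psi)+\ev(\iota\psi)=0$, and establish the needed hypotheses as invariants of the algorithm --- is the same as the paper's, which simply carries over the argument of \cite[\S 4]{BKT2} with $2k+1$ replaced by $K$. However, there is a genuine gap in how you argue that $\iota$ maps $\Psi_1$ into $\Psi_1$. The set $\Psi_1$ is not ``all valid $4$-tuples of positive level satisfying a STOP condition''; it is the set of $4$-tuples of positive level actually produced by the algorithm starting from $\{(\cC,\nu,\emptyset,\ell+1)\}$. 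Checking that $\iota\psi$ satisfies STOP {\bf (ii)} or condition $\X$ shows only that $\iota\psi$ would terminate \emph{if it were ever generated}; it does not show that $\iota\psi$ is generated. This is the hard part of the argument in loc.\ cit.: one first gives a complete combinatorial characterization of the elements of $\Psi_1$ (a list of conditions on $(D,\mu,S,h)$, proved by induction on the iterations to be both satisfied by every $4$-tuple the algorithm leaves untouched and sufficient for membership), and only then verifies that $\iota$ preserves that characterization. Your ``algorithmic invariants'' supply necessary conditions but never the converse, so closure of $\Psi_1$ under $\iota$ is not established.

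A smaller but concrete error: in the case $(h,h)\notin D$ you assert that validity gives $(h-1,h)\notin D$. Since $(h-1,h)\leq(h,h)$ in the partial order on $\Delta$, validity yields the \emph{opposite} implication, namely $(h,h)\in D\Rightarrow(h-1,h)\in D$; the statement you need is not formal and must again be extracted from the structure of the reachable $4$-tuples (for instance, $(m,m+1)\in D$ with $(m+1,m+1)\notin D$ is not excluded by validity, because every pair of $D$ has first index at most $m$). Likewise the inequalities $\mu_{h-1}\geq\la_{h-1}$ and $\mu_{h-1}+\mu_h>2k$ needed to apply Lemma~\ref{commuteC} --- and even the hypotheses $h\geq 2$, $\mu_h\geq\la_{h-1}$ required for $\iota\psi$ to be defined --- are asserted rather than derived. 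These are precisely the points that the characterization of $\Psi_1$ in \cite[\S 4]{BKT2} is designed to settle, so your outline points in the right direction but is not yet a proof.
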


We remark that the 4-tuples $\psi\in\Psi_0$ with $\mu_{\ell+1} <0$
evaluate to zero trivially, by Definition \ref{recursedef}; the two
claims therefore suffice to prove Theorem \ref{whWpieri}. The proofs
of these claims are nearly identical to the ones in \cite[\S 4]{BKT2},
replacing the value $2k+1$ by $K$ throughout. The following explicit
construction of the sets $S$ in the 4-tuples which appear in $\Psi_0$
accounts for the multiplicities $\wh{N}(\la,\mu)$ in Claim
\ref{claim1}.

Fix an arbitrary $k$-strict partition $\mu$ such that $\la \to \mu$
and $|\mu|=|\la|+p$. Recall the set $\A$ of \S \ref{classpieri}, and
define a new set $\wh{\A}$ by
\[
\wh{\A} = \begin{cases}
\A\cup\{[m,k]\} & \text{if $\la_m = K/2 < \mu_m$}, \\
\A & \text{otherwise}.
\end{cases}
\]
A {\em component} means an (edge or vertex) connected component of the
set $\wh{\A}$.  We say that a box $B$ of $\wh{\A}$ is {\em
distinguished} if the box directly to the left of $B$ does not lie in
$\wh{\A}$.  We say that $B$ is {\em optional} if it is the rightmost
distinguished box in its component.  Using (\ref{rmk}), we deduce that
$\wh{N}(\la,\mu)$ is equal to the number of optional distinguished
boxes in $\wh{\A}$.

To each distinguished box $B = [i,c]$ we associate the pair $(i,j) =
(i,r(i+c))$. The inequality $\la_{i-1} \geq K+i-(i+c)$ implies that
$i\leq j$, so $(i,j) \in \Delta$. Let $E$ (respectively $F$) be the
set of pairs associated to optional (respectively non-optional)
distinguished boxes.  We furthermore let $G$ be the set of all pairs
$(i,j) \in \Delta$ for which some box in row $i$ of $\mu\ssm\la$ is
$K$-related to a box in row $j$ of $\la\ssm\mu$.

Suppose that $(\cC_{\ell+1}(\mu),\mu,S,0) \in \Psi_0$ and $(i,j) \in S
\ssm G$. Then one can show that $(i,j)$ is the pair associated to a
distinguished box of $\wh{\A}$. For the last statement, observe that
if $i=j=m$, then $\la_m \leq K/2 < \mu_m$ and $(m,m)$ is associated to
the distinguished box $[m,k+1]\in\wh{\A}$ (respectively, $[m,k]\in
\wh{\A}$) if $\la_m<K/2$ (respectively, $\la_m=K/2$). To every subset
$E'$ of $E$ we associate the set of pairs $S(E') := E'\cup F\cup G$.
This is a disjoint union, and there are exactly $2^{\wh{N}(\la,\mu)}$
sets of this form.  Let $S \subset \Delta$ be any subset.  Then one
may prove as in \cite[\S 4]{BKT2} that $(\cC_{\ell+1}(\mu),\mu,S,0)
\in \Psi_0$ if and only if $S = S(E')$ for some subset $E' \subset E$.

\section{The classical Giambelli formula}
\label{classgiam}

In this section we prove Propositions \ref{prop+1} and \ref{prop-1}
and Theorem \ref{mainthm} of the introduction. Throughout the section
we will be in the even orthogonal case where $K=2k$. We assume that
$k>0$, however all of our results about the cohomology of
$\OG(n+1-k,2n+2)$ also hold when $k=0$, provided that in the latter
case $\OG$ parametrizes both families of maximal isotropic subspaces,
and hence is a disjoint union of two irreducible components.

\subsection{}
The Schubert classes $\ta_\la$ for all typed $k$-strict partitions
$\la$ whose diagrams are contained in an $m\times (n+k)$ rectangle
form a $\Z$-basis of $\HH^*(\OG(m,N),\Z)$. For typed $k$-strict
partitions $\la$ and $\mu$, we write $\la\to\mu$ if the underlying
$k$-strict partitions satisfy $\la\to\mu$ (with $K=2k$) and
furthermore $\type(\la)+\type(\mu)\neq 3$.  According to \cite[Thm.\
3.1]{BKT1}, the following Pieri rule holds in $\HH^*(\OG,\Z)$. For any
typed $k$-strict partition $\la$ and any integer $p\geq 1$, we have
\begin{equation}
\label{bktpieri}
 c_p \cdot \tau_\lambda = \sum_{\substack{\lambda \to \mu \\
      |\mu|=|\lambda|+p}} 2^{N(\lambda,\mu)} \, \tau_\mu.
\end{equation}

\subsection{}
\label{mainpfs}
In the sequel we will have to work both with $k$-strict and with typed
$k$-strict partitions. We will therefore adopt the following
conventions for use with Schubert classes and (later) representing
polynomials indexed by such objects. If the $k$-strict partition
$\la$ has positive type, we agree that $\ta_\la$ and $\ta'_\la$ denote
the Schubert classes in $\HH^*(\OG,\Z)$ of type $1$ and $2$,
respectively, associated to $\la$. If $\la$ does not have positive
type, then $\ta_\la$ denotes the associated Schubert class of type
zero.

For any $k$-strict partition $\la$, we define a cohomology class
$\wh{\ta}_\la\in\HH^{2|\la|}(\OG,\Z)$ by the equations
\begin{equation}
\label{htadef}
\wh{\ta}_\la = \begin{cases}
\ta_\la + \ta'_\la & \text{if $\la$ has positive type}, \\
\ \quad \ta_\la & \text{otherwise}.
\end{cases}
\end{equation}
Let $\Q[c]$ denote the $\Q$-subalgebra of $\HH^*(\OG(m,N),\Q)$
generated by the Chern classes $c_p$ for all $p\geq 1$.

Recall from the introduction that for each $k$-strict partition $\la$
contained in an $m\times (n+k)$ rectangle, we have a Zariski closed
subset $Y_\la$ of the even orthogonal Grassmannian $\OG(n+1-k,2n+2)$,
defined by the equations (\ref{Yeq}). See the appendix for a proof
that if $\la$ has type zero, then $Y_\la$ is the Schubert variety
$X_\la$ in $\OG$, and otherwise $Y_\la$ has two irreducible
components, which are the Schubert varieties $X_\la$ and $X'_\la$. We
deduce that the cohomology class $[Y_\la]$ in $\HH^{2|\la|}(\OG,\Z)$
is equal to $\wh{\tau}_\la$, for each such $\la$.  The Pieri rule
(\ref{bktpieri}) therefore implies that the classes $[Y_\la]$ satisfy
the rule (\ref{whYpierieq}).  Proposition \ref{prop+1} follows
directly from these observations.

We next prove Proposition \ref{prop-1}. For any $k$-strict partition
$\la$, define a class $\wt{\ta}_\la$ in $\HH^{2|\la|}(\OG,\Z)$ by the
equations
\begin{equation}
\label{wtadef}
\wt{\ta}_\la = \begin{cases}
\ta_\la - \ta'_\la & \text{if $\la$ has positive type}, \\
\ \quad 0 & \text{otherwise}.
\end{cases}
\end{equation}
The Pieri rule (\ref{bktpieri}) implies that for any $p\geq 1$, we have
\begin{equation}
\label{bkwtpieri}
c_p \cdot \wt{\ta}_\lambda = \begin{cases}
\sum_{\lambda \to \mu, \,
  |\mu|=|\lambda|+p} 2^{N(\lambda,\mu)} \, \wt{\ta}_\mu &
\text{if $\la$ has positive type}, \\
\qquad \qquad \ 0 & \text{otherwise}.
\end{cases}
\end{equation}
The positivity of the coefficients in this formula is a consequence of
the type convention for Schubert classes introduced in \cite{BKT1}.

Consider the $\Q$-linear map
\[
\psi: \HH^*(\OG(m-1,N-1),\Q) \to \HH^*(\OG(m,N),\Q)_{-1}
\]
defined by $\psi(\sigma_\la) = \wt\tau_{\la+k}$ for each $k$-strict
partition $\la$ contained in an $(m-1)\times(n+k)$ rectangle.  By
comparing (\ref{bkwtpieri}) with (\ref{whYpierieq}) for $K=2k+1$, we
obtain $\psi(c_p \cdot \sigma_\la) = c_p \cdot \psi(\sigma_\la)$.  We
deduce that $\psi$ is an isomorphism of $\Q[c]$-modules, and that
\[
  \HH^*(\OG,\Q)_{-1} = \bigoplus_{\la} \Q \cdot 
  (\ta_{\la+k}-\ta'_{\la+k}) = \Q[c_1,\dots,c_{n+k}] \cdot (\ta_k - \ta'_k).
\]

Next, use Theorem \ref{Chthm} to expand $\sigma_\la\in
\HH^*(\OG(m-1,N-1))$ as a Giambelli polynomial in the Chern classes of
$\cQ$. By mapping this equation to $\HH^*(\OG(m,N),\Q)_{-1}$ via
$\psi$ and using the fact that $\psi(1)= \ta_k - \ta'_k$, it follows
that
\begin{equation}
\label{la+k}
\ta_{\la+k}-\ta'_{\la+k} = 2^{-\ell_k(\la)} (\tau_k-\tau'_k) 
\,\wt{R}^{\la} \, c_\la,
\end{equation}
where  $\wt{R}^\la$ denotes the operator defined by
formula (\ref{Req}) with $K=2k+1$.
This completes the proof of Proposition \ref{prop-1}.

Let $\la$ be a $k$-strict partition contained in an $m \times (n+k)$
rectangle, and let $R$ be a finite monomial in the operators $R_{ij}$
that occurs in the expansion of $R^\la$.  If $\la$ does not have
positive type, then set $R \diamond c_\la = 0$.  If $\la$ has positive
type, then set $d = \ell_k(\la)+1$, so that $\la_d = k < \la_{d-1}$.
If $R$ contains any operator $R_{ij}$ for which $i$ or $j$ is equal to
$d$, then set $R \diamond c_\la = 0$.  Otherwise define $R \diamond
c_\la = (\tau_k - \tau'_k)\, c_{\wh{R \la}}$, where $\wh \alpha =
(\al_1,\dots,\al_{d-1},\al_{d+1},\dots)$.  Equation (\ref{la+k}) is
then equivalent to the identity
\begin{equation}
\label{wtprop_eqn}
\wt \tau_\la = 2^{-\ell_k(\la)} R^\la \diamond c_\la \,.
\end{equation}

Now observe that for any typed $k$-strict partition $\la \in \wt
\cP(k,n)$ and monomial $R$ we have
\begin{equation}
\label{eqn:Rcases}
R \star c_\la = \begin{cases}
  R\, c_\la & \text{if $\type(\la)=0$,} \\
  \frac{1}{2} R\, c_\la + \frac{1}{2} R \diamond c_\la & \text{if
    $\type(\la)=1$,} \\
  \frac{1}{2} R\, c_\la - \frac{1}{2} R \diamond c_\la & \text{if
    $\type(\la)=2$.}
  \end{cases}
\end{equation}
Theorem~\ref{mainthm} follows by combining
Theorem~\ref{Chthm}, equations (\ref{wtprop_eqn}) and
(\ref{eqn:Rcases}), and the identity
\begin{equation}
\label{whwtta}
\ta_\la = \begin{cases}
\wh{\ta}_\la & \text{if $\type(\la)=0$}, \\
(\wh{\ta}_\la + \wt{\ta}_\la)/2 & \text{if $\type(\la)=1$}, \\
(\wh{\ta}_\la - \wt{\ta}_\la)/2 & \text{if $\type(\la)=2$}.
\end{cases}
\end{equation}
Note that we have ignored the type of $\la$ in the expressions
$R\,c_\la$, $R \diamond c_\la$, $\wh{\ta}_\la$, $\wt{\ta}_\la$ which
appear on the right hand sides of equations (\ref{eqn:Rcases}) and
(\ref{whwtta}).

\section{The quantum Giambelli formula}
\label{qgog}

\subsection{Quantum cohomology of orthogonal Grassmannians}
\label{S:qpieri}

As in the introduction, we consider the orthogonal Grassmannian 
$\OG = \OG(m,N)$ with $N=2m+K$, where $K \geq 2$. When $K \neq 2$, the
quantum cohomology ring $\QH(\OG)$ is a $\Z[q]$-algebra which is
isomorphic to $\HH^*(\OG) \otimes_\Z \Z[q]$ as a module over $\Z[q]$.
The degree of the formal variable $q$ is $n+k$.  When $K=2$,
$\QH(\OG)$ is a $\Z[q_1,q_2]$-algebra, we have $\QH(\OG) \cong
\HH^*(\OG,\Z) \otimes_\Z \Z[q_1,q_2]$ as a $\Z[q_1,q_2]$-module, and
$\deg(q_1) = \deg(q_2) = n+1$. In both cases, we set 
$\QH(\OG,\Q):=\QH(\OG)\otimes_\Z\Q$. 

We first correct an error in the definition of $\wt{\nu}$ given
in \cite[\S 3.5]{BKT1}, which appears in the quantum Pieri rule
\cite[Thm.\ 3.4]{BKT1}.  In the first paragraph of \cite[\S
3.5]{BKT1}, $\wt\nu$ should be obtained from $\nu$ by removing the
first row of $\nu$ as well as $n+k-\nu_1$ boxes in the first column,
i.e. $\wt\nu=(\nu_2, \nu_3, \dots, \nu_r)$, where $r=\nu_1-2k+2$.

We next recall some further definitions from \cite{BKT1}, using a
more uniform notation. Let $P(m,N)$ denote the set of all $k$-strict
partitions contained in an $m \times (n+k)$ rectangle.  For $\la \in
P(m,N)$ we let $\la^* = (\la_2,\la_3,\dots)$ be the partition obtained
by removing the first row of $\la$.  Let $P'(m,N)$ be the set of
$k$-strict partitions $\nu$ contained in an $(m+1)\times (n+k)$
rectangle for which $\ell(\nu)=m+1$, $\nu_1 \geq K-1$, and the number
of boxes in the second column of $\nu$ is at most $\nu_1-K+2$.  For
each element $\nu \in P'(m,N)$, we let $\wt\nu \in P(m,N)$ denote the
partition obtained by removing the first row of $\nu$ as well as
$n+k-\nu_1$ boxes from the first column.  That is,
\[
\wt\nu = (\nu_2, \nu_3, \dots, \nu_r)\, , 
\text{\ \ where $r = \nu_1-K+2$.}
\]

For each $k$-strict partition $\la \in P(m,N)$, we define
$Y_\la\subset \OG$ as in the introduction and let $[Y_\la]$ denote the
corresponding class in $\QH(\OG)$. For odd integers $N$, we set
$\sigma_\la=[Y_\la]$, while for even $N$, we let $\wh\tau_\la$ and
$\wt\tau_\la$ denote the classes in $\QH(\OG)$ defined by the
equations (\ref{htadef}) and (\ref{wtadef}). We proceed to describe
the products of these classes with the Chern classes $c_p = c_p(\cQ)$.

Assume first that $K \geq 3$. The following quantum Pieri rule for
Chern classes is a direct consequence of \cite[Thms.\
2.4 and 3.4]{BKT1}. For any $\la \in P(m,N)$ and integer $p \in
[1,n+k]$, we have
\begin{equation}\label{eqn:qchernpieri}
  c_p \cdot [Y_\la] = 
  \sum_{\la \to \mu} 2^{\wh N(\la,\mu)}\, [Y_\mu] +
  \sum_{\la \to \nu} 2^{\wh N(\la,\nu)}\, [Y_{\wt\nu}]\, q +
  \sum_{\la^* \to \rho} 2^{\wh N(\la^*,\rho)}\, [Y_{\rho^*}]\, q^2\,.
\end{equation}
Here the first sum is classical, the second sum is over $\nu \in
P'(m,N)$ with $\la \to \nu$ and $|\nu|=|\la|+p$, and the third sum
is empty unless $\la_1=n+k$, and over $\rho \in P(m,N)$ such that
$\rho_1 = n+k$, $\la^* \to \rho$, and $|\rho|=|\la^*|+p$.

Suppose now that $K \geq 4$ is even. Then the quantum cohomology 
ring $\QH(\OG,\Q)$ has a decomposition
\[
\QH(\OG,\Q) = \QH(\OG,\Q)_1 \oplus \QH(\OG,\Q)_{-1}
\]
where $\QH(\OG,\Q)_1$ (respectively $\QH(\OG,\Q)_{-1}$) is the
$\Q[q]$-submodule spanned by the classes $\wh\tau_\la$ (respectively
$\wt\tau_\la$). This is the eigenspace decomposition for the action
of the involution $\iota:\OG\to\OG$ defined in the introduction.
Set $\ov{\OG} = \OG(m-1,N-1)$.
Proposition~\ref{prop-1} has the following generalization.

\begin{prop}\label{qprop-1}
  The $\Q$-linear map $\QH(\ov\OG,\Q) \to \QH(\OG,\Q)_{-1}$ defined by
  $$\sigma_\la\, q^d \mapsto  \wt\tau_{\la+k}\,(-q)^d$$ is an
  isomorphism of $\Q[c_1,\dots,c_{n+k}]$-modules.
\end{prop}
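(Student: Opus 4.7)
The plan is to adapt the proof of Proposition~\ref{prop-1} to the quantum setting, verifying that $\psi$ respects multiplication by the Chern classes $c_p$. Both $\QH(\ov\OG,\Q)$ and $\QH(\OG,\Q)_{-1}$ are free $\Q[q]$-modules: the former has basis $\{\s_\la\}$ with $\la$ ranging over $k$-strict partitions in an $(m-1)\times(n+k)$ rectangle, while (since $\iota$ acts trivially on $q$ for $K\geq 4$) the latter equals $\HH^*(\OG,\Q)_{-1}\otimes_\Q\Q[q]$, and by Proposition~\ref{prop-1} has basis $\{\wt\ta_{\la+k}\}$ over the same index set. The images $\psi(\s_\la q^d)=(-1)^d\wt\ta_{\la+k}q^d$ therefore form a $\Q$-basis of the target, so $\psi$ is automatically a $\Q$-linear bijection. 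The task reduces to showing that $\psi$ is $\Q[c]$-linear.

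For this it suffices to check $\psi(c_p\cdot\s_\la)=c_p\cdot\wt\ta_{\la+k}$ in $\QH(\OG,\Q)_{-1}$ for each such $\la$ and each $1\leq p\leq n+k$. Expand the left-hand side using the quantum Pieri rule (\ref{eqn:qchernpieri}) in $\ov\OG$ (odd case, $K=2k+1$), then apply $\psi$, which substitutes $\s_\mu\mapsto\wt\ta_{\mu+k}$ and $q\mapsto-q$. On the right-hand side, write $c_p\wt\ta_{\la+k}=c_p\ta_{\la+k}-c_p\ta'_{\la+k}$, apply the quantum Pieri rule for the even orthogonal Grassmannian $\OG$ (with $K=2k$) to each typed Schubert class, and subtract. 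The classical portions on both sides agree by Proposition~\ref{prop-1} together with equation (\ref{bkwtpieri}).

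It then remains to compare the $q$-corrections. The quantum correction in $c_p\s_\la$ in $\ov\OG$ is $q\sum_{\la\to\nu}2^{\wh N(\la,\nu)}\s_{\wt\nu}$, summed over $k$-strict $\nu$ of length $m$ with $\la\to\nu$ under $K=2k+1$ and $|\nu|=|\la|+p$; after $\psi$ this contributes $-q\sum 2^{\wh N(\la,\nu)}\wt\ta_{\wt\nu+k}$. On the $\OG$ side, the quantum Pieri corrections of $c_p\ta_{\la+k}$ and $c_p\ta'_{\la+k}$ each run over typed $k$-strict partitions $\nu'$ of length $m+1$ with $\la+k\to\nu'$ under $K=2k$. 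The key is to establish a bijection $\nu\leftrightarrow\nu'$ (essentially $\nu'=\nu+k$, with the leading copy of $k$ consumed during passage to $\wt{\nu'}$), identify the multiplicities $2^{\wh N(\la,\nu)}$ with their even counterparts, check that $\wt{\nu'}=\wt\nu+k$, and verify that the subtraction $c_p\ta_{\la+k}-c_p\ta'_{\la+k}$ produces exactly the opposite sign from what would appear in the $\wh\ta$ combination, accounting for the $-q$ produced by $\psi$ on the $\ov\OG$ side.

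The main obstacle is this sign bookkeeping in the quantum Pieri rule for typed Schubert classes, entirely analogous to how (\ref{bkwtpieri}) was deduced classically from (\ref{bktpieri}): one needs the \cite{BKT1} convention for assigning types to Schubert classes in the even orthogonal Grassmannian to guarantee that the two typed quantum contributions enter $c_p\ta_{\la+k}$ and $c_p\ta'_{\la+k}$ with opposite signs in the difference, so that the resulting $q$-term in $c_p\wt\ta_{\la+k}$ matches $-q$ times its $\ov\OG$ counterpart. Once this matching is verified term by term, $\psi(c_p\s_\la)=c_p\wt\ta_{\la+k}$ follows, completing the proof.
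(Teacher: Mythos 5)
Your approach coincides with the paper's: the printed proof of Proposition \ref{qprop-1} is the single sentence ``argue as in the proof of Proposition \ref{prop-1}, using (\ref{eqn:qchernpieri}) and the quantum Pieri rule for $c_p\cdot\tau_\la$ from [BKT1, Thm.\ 3.4]'', which is exactly your plan of matching the classical parts via Proposition \ref{prop-1} and (\ref{bkwtpieri}) and then matching the quantum corrections term by term, with the type convention producing the sign that absorbs the substitution $q\mapsto -q$. One small omission: for $K\geq 3$ the rule (\ref{eqn:qchernpieri}) also carries a $q^2$ term (nonempty when $\la_1=n+k$), which your comparison of quantum corrections skips; it must be matched as well, though since $(-q)^2=q^2$ no sign issue arises there and the coefficients reduce to the classical correspondence applied to $\la^*$.
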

\begin{proof}
We argue as in the proof of Proposition \ref{prop-1} given 
in \S \ref{mainpfs}, using (\ref{eqn:qchernpieri}) and the quantum 
Pieri rule for the products $c_p \cdot \ta_\la$ obtained from 
\cite[Thm.\ 3.4]{BKT1}.
\end{proof}

\noindent
Observe that (\ref{eqn:qchernpieri}) and Proposition \ref{qprop-1} 
determine the products $c_p\cdot \wt{\ta}_\la$ in $\QH(\OG)$ 
for all $p\geq 1$ and even $K\geq 4$.

Assume next that $K=2$, so that $k=1$ and $m=n$. Set $\wh q = q_1+q_2$
and $\wt q = q_1 - q_2$. The following results are direct consequences
of \cite[Thm.\ A.1]{BKT1}. For $\la \in P(n,2n+2)$ and $p \in
[1,n+1]$ such that $p \neq 1$ or $\la \neq (1^n)$, we have
\begin{equation}
\label{gen1}
  c_p \cdot \wh\tau_\la = 
  \sum_{\la \to \mu} 2^{\wh N(\la,\mu)}\, \wh\tau_\mu +
  \sum_{\la \to \nu} 2^{\wh N(\la,\nu)-1}\,
  (\wh\tau_{\wt\nu}\, \wh q - \wt\tau_{\wt\nu}\, \wt q\,) 
  + \sum_{\la^* \to \rho} 2^{\wh N(\la^*,\rho)}\, \wh\tau_{\rho^*}\, q_1 q_2\,,
\end{equation}
where the first sum is classical, the second sum is over $\nu \in
P'(n,2n+2)$ with $\la\to\nu$ and $|\nu|=|\la|+p$, and the third sum is
empty unless $\la_1=n+1$ and over $\rho\in P(n,2n+2)$ such that
$\rho_1=n+1$, $\la^* \to \rho$, and $|\rho|=|\la^*|+p$.

If $\la \in P(n-1,2n+1)$ and $p \in [1,n+1]$ with $p \neq 1$ or $\la
\neq (1^{n-1})$, we have
\begin{equation}
\label{gen2}
  \begin{split}
  c_p \cdot \wt\tau_{(\la,1)} = &\sum_{\la \to \mu} 2^{N(\la,\mu)}\,
  \wt\tau_{(\mu,1)} + \sum_{\la \to \nu}
  2^{N(\la,\nu)-1}\,
  (\wh\tau_{(\wt\nu,1)}\, \wt q - \wt\tau_{(\wt\nu,1)}\, \wh q\,)\\
  &+ \sum_{\la^* \to \rho} 2^{N(\la^*,\rho)}\,
  \wt\tau_{(\rho^*,1)}\, q_1 q_2\,,
  \end{split}
\end{equation}
where the first sum is classical, the second sum is over $\nu \in
P'(n-1,2n+1)$ with $\la \to \nu$ (for $K=3$) and $|\nu|=|\la|+p$, and
the third sum is empty unless $\la_1=n+1$ and over $\rho \in
P(n-1,2n+1)$ such that $\rho_1=n+1$, $\la^* \to \rho$ (for $K=3$), and
$|\rho|=|\la^*|+p$. Moreover, the integers $N(\la,\mu)$,
$N(\la,\nu)$, and $N(\la^*,\rho)$ are computed for $K=3$.

Finally, we have
\begin{equation}
\label{special1}
c_1 \cdot \wh\tau_{(1^n)} = \begin{cases} 2\, \wh\tau_{n+1} + 2\,
  \wh\tau_{(2,1^{n-1})} + \wh q & \text{if $n>1$}\\ 2\, \wh\tau_{n+1}
  + \wh q & \text{if $n=1$}\\
\end{cases}
\end{equation}
and
\begin{equation}
\label{special2} 
c_1 \cdot \wt\tau_{(1^n)} = 
2\, \wt\tau_{(2,1^{n-1})} + \wt q \,.
\end{equation}

\begin{remark}
Proposition~\ref{qprop-1} is valid for $K=2$ and $\OG = \OG(n,2n+2)$
if the quantum cohomology ring of the latter space is replaced with
$\QH(\OG,\Q)/(q_1-q_2)$, with $q := q_1 = q_2$.  Similarly, equation
(\ref{eqn:qchernpieri}) is valid in this quotient ring, except for the
product $c_1 \cdot \wh\tau_{(1^n)}$.  This product is special because
it is the only one that can produce a partition $\nu \in P'(n,2n+2)$
such that $\nu$ has positive type but $\wt\nu$ does not have positive
type.
\end{remark}

\subsection{The stable cohomology ring of $\OG$}

The stable cohomology ring of the orthogonal Grassmannian $\OG(m,N)$
depends only on $K=N-2m$, and is denoted $\IH(\OG_K)$. The ring
$\IH(\OG_K)$ is defined as the inverse limit in the category of graded
rings of the system
\[
\cdots \leftarrow \HH^*(\OG(m,N),\Z) \leftarrow
\HH^*(\OG(m+1,N+2),\Z) \leftarrow \cdots
\]
We set $\IH(\OG_k(\text{odd})) := \IH(\OG_{2k+1})$ and 
$\IH(\OG_k(\text{even})) := \IH(\OG_{2k})$. The ring 
$\IH(\OG_k(\text{odd}))$ was studied in \cite{BKT2, BKT3}.

It follows from \cite[Thm.\ 3.2]{BKT1} that $\IH(\OG_k(\text{even}))$
may be presented as a quotient of the polynomial ring
$\Z[\ta_1,\ldots,\ta_{k-1},\ta_k,\ta_k',\ta_{k+1},\ldots]$ modulo the
relations
\begin{gather}
\label{stabrel1}
\tau_r^2+ \sum_{i=1}^r(-1)^i\tau_{r+i}c_{r-i} = 0 \ \ \ \text{for
$r>k$}, \\
\label{stabrel2}
\tau_k\tau'_k+\sum_{i=1}^k (-1)^i\tau_{k+i}\tau_{k-i} = 0,
\end{gather}
where the $c_i$ obey the equations (\ref{ctotau}).  

The stable cohomology ring $\IH(\OG_k(\mathrm{even}))$ has a free
$\Z$-basis of Schubert classes $\tau_\la$, one for each typed
$k$-strict partition $\la$.  There is a natural surjective ring
homomorphism
\[
\IH(\OG_k(\mathrm{even}))\to\HH^*(\OG(n+1-k,2n+2),\Z)
\]
that maps $\tau_\la$ to $\tau_\la$, when $\la\in\wt{\cP}(k,n)$, and to
zero, otherwise.  All the conclusions of \S \ref{classgiam} remain
true for the ring $\IH(\OG_k(\text{even}))$, with no restrictions on
the size of the (typed) $k$-strict partitions involved.

\subsection{Proof of Theorem \ref{qgiam}}

We first generalize two results from \cite{BKT3}.

\begin{lemma}\label{lem:only}
  Let $\la$ be a $k$-strict partition contained in an $m \times (n+k)$
  rectangle.  Then the stable Giambelli polynomial $R^\la\, c_\la$ for
  $[Y_\la]$ in $\IH(\OG_K)$ involves only Chern classes $c_p$ with $p
  \leq 2n+2k-1$.
\end{lemma}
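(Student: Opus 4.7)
The plan is to analyze the formal expansion
\[
R^\la c_\la = \sum_R a_R\, c_{R\la},
\]
where $R=\prod_{i<j} R_{ij}^{b_{ij}}$ ranges over monomials in the raising operators and $a_R$ is its coefficient in $\prod_{i<j}(1-R_{ij})\prod_{(i,j)\in\cC(\la)}(1+R_{ij})^{-1}$. For $(i,j)\in\cC(\la)$ the factor $(1-R_{ij})(1+R_{ij})^{-1}=1-2R_{ij}+2R_{ij}^2-\cdots$ contributes arbitrary $b_{ij}\geq 0$ with signs $\pm 2$, while for $(i,j)\notin\cC(\la)$ only $b_{ij}\in\{0,1\}$ appears, with sign $(-1)^{b_{ij}}$. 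The sum is effectively finite since $c_\mu=0$ whenever some $\mu_i<0$; the goal is to show that every surviving term has $(R\la)_h\leq 2n+2k-1$ for all $h$.

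The key numerical input is the bound $\la_i+\la_j\leq 2n+2k-1$ for all $i\neq j$: if both parts exceed $k$ then by $k$-strictness they are distinct and hence at most $n+k$ and $n+k-1$; otherwise at least one of them is at most $k$ while the other is at most $n+k$, so the sum is at most $n+2k\leq 2n+2k-1$. This immediately forces $(R\la)_h\leq 2n+2k-1$ whenever row $h$ of $R\la$ receives inflow from at most one other row $j$, since then $b_{hj}\leq\la_j$ from $(R\la)_j\geq 0$. A term with $(R\la)_h\geq 2n+2k$ must therefore chain its inflow through intermediate rows, which in turn forces several of the pairs involved to lie in $\cC(\la)$.

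The main step is to construct an involution $R\mapsto R'$ on the set of ``bad'' monomials (those with some $(R\la)_h\geq 2n+2k$) satisfying $R'\la=R\la$ and $a_{R'}=-a_R$. The underlying algebraic move is the identity that the operators $R_{h,j+1}$ and $R_{h,j}R_{j,j+1}$ act identically on $\la$: exchanging one formulation for the other preserves $R\la$, and the sign of the coefficient, which is governed by the $\pm 2$ signs in $(1-R)(1+R)^{-1}$ and the $\pm 1$ signs in $(1-R)$, flips under the swap. I would pin down the canonical swap position as, for instance, the lexicographically smallest $(h,j)$ for which the move is admissible given the data of $R$, and use Lemmas \ref{commuteA}, \ref{commuteC}, and \ref{easylm} of \S\ref{Wpieriproof} to verify the identities in the raising operator ring.

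The main obstacle will be arranging the definition of the canonical swap position so that the map is a genuine involution rather than degenerating into a chain, and checking that the modified exponents continue to respect the constraints $(R\la)_i\geq 0$ elsewhere. Since the corresponding lemma for odd orthogonal Grassmannians is proved in \cite{BKT2} and the arguments of this paper are declared ``almost identical'' to those of loc.\ cit., I expect the adaptation to $K=2k$ to amount only to replacing $2k+1$ by $2k$ in the numerical bounds, once the combinatorial bookkeeping for the involution is in place.
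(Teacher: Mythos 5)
There is a genuine gap, and it sits exactly where you flagged it: the sign-reversing involution on raising-operator monomials does not exist. Your plan needs, for each composition $\nu=R\la$ having an entry $\geq 2n+2k$, a fixed-point-free pairing of the monomials $R$ with $R\la=\nu$ such that paired coefficients are opposite. Take $K=2k=2$, $m=3$, $n=3$ and $\la=(4,3,2)$, so the bound is $2n+2k-1=7$ and all pairs $(i,j)$ lie in $\cC(\la)$. The monomials producing $c_{(9,0,0)}$ are $R_{12}^3R_{13}^2$, $R_{12}^4R_{13}R_{23}$ and $R_{12}^5R_{23}^2$, with coefficients $-4$, $+8$, $-4$. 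They sum to zero, but no involution with $a_{R'}=-a_R$ exists on a three-element set with these values: your move $R_{13}\mapsto R_{12}R_{23}$ links them into precisely the chain you were worried about, and the middle coefficient has to be split in two before any pairing is possible. This is why the cancellation arguments of \S\ref{classpieri}--\ref{Wpieriproof} (following \cite{BKT2}) are run not on monomials but on valid $4$-tuples $(D,\mu,S,h)$, where the auxiliary sets $D$ and $S$ supply exactly the extra bookkeeping needed to resolve such chains. A secondary inaccuracy: the inference ``$b_{hj}\leq \la_j$ from $(R\la)_j\geq 0$'' requires that row $j$ receive no inflow itself, not merely that row $h$ has a single source; and note that pairwise bounds alone cannot suffice, since an entry can in principle accumulate $\la_1+\la_2+(\ell-2)$, which exceeds $2n+2k-1$ for $\ell\geq 3$ --- so the cancellation mechanism really must be constructed, not just invoked.

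For comparison, the paper does not argue by involution at all: it proves the lemma by citing \cite[Cor.\ 1]{BKT3} (the quantum Giambelli paper, not \cite{BKT2}), where the bound is extracted from a recursive, Laplace-type expansion of $R^\la\,c_\la$ along the first row --- the identity recorded here as Proposition \ref{prop:recurse} --- in which the classes $c_p$ peeled off satisfy $\la_1\leq p\leq 2n+2k-1$ and the remaining factors are Giambelli expressions for partitions $\mu\subset\la^*$ inside the same rectangle, so one concludes by induction on the length. Your opening numerical observation $\la_i+\la_j\leq 2n+2k-1$ is correct and is indeed the ultimate source of the bound, but the route from it to the lemma has to pass through one of these finer mechanisms.
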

\begin{proof}
  The proof is the same as that of \cite[Cor.\ 1]{BKT3}.
\end{proof}

For any abelian group $A$, let $A_\Q=A\otimes_\Z\Q$.

\begin{prop}\label{prop:recurse}
  Let $\la$ be a $k$-strict partition.  Then there exist unique
  coefficients $a_{p,\mu} \in \Q$ for $p \geq \la_1$ and $(p,\mu)$ a
  $k$-strict partition, such that the recursive identity
  \[
  [Y_\la] = \sum_{p \geq \la_1}
  \sum_{\mu\,:\,(p,\mu)\,k\text{-}\mathrm{strict}}
  a_{p,\mu}\, c_p\, [Y_\mu]
  \]
  holds in the stable cohomology ring $\IH(\OG_K)_{\Q}$.  Furthermore,
  $a_{p,\mu}=0$ whenever $\mu \not\subset \la^*$, or when $\la$ is
  contained in an $m \times (n+k)$ rectangle and $p \geq 2n+2k$.
\end{prop}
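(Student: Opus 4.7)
The plan is to adapt the proof of \cite[Proposition 2]{BKT3}, which is the analogous statement for odd orthogonal Grassmannians, using the Pieri rule \eqref{whYpierieq} that now holds uniformly in both parities of $N$ thanks to Section~\ref{classpieri}. For existence, I would proceed by induction downward on $\la_1$ for fixed $|\la|$; the base case $\la=(|\la|)$ is immediate from Pieri applied to $c_{|\la|}\cdot 1$.

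For the inductive step, apply \eqref{whYpierieq} to $c_{\la_1}\cdot [Y_{\la^*}]$:
\[
c_{\la_1}\cdot[Y_{\la^*}] \;=\; 2^{\wh N(\la^*,\la)}\,[Y_\la] \;+\sum_{\substack{\la^*\to\nu,\ |\nu|=|\la| \\ \nu\ne\la}} 2^{\wh N(\la^*,\nu)}\,[Y_\nu],
\]
and solve for $[Y_\la]$. The crucial combinatorial claim is that every $\nu\neq\la$ appearing in the sum satisfies $\nu_1>\la_1$ and $\nu^*\subset\la^*$; these are consequences of the column-disjointness required of the horizontal strip and the shape restriction on the vertical-strip removal allowed in the rule $\la^*\to\nu$. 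Given these facts, the inductive hypothesis applies to each $[Y_\nu]$, giving an expansion $\sum b_{p,\mu}c_p[Y_\mu]$ with $p\geq\nu_1>\la_1$ and $\mu\subset\nu^*\subset\la^*$; substituting yields the desired expansion for $[Y_\la]$, with $p\geq\la_1$ and $\mu\subset\la^*$. The bound $p<2n+2k$ in the second claim then follows by invoking Lemma~\ref{lem:only} at each recursive step.

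The verification of the combinatorial claim is the main obstacle. The analysis — a bookkeeping of how a horizontal strip of size $\la_1+v$ added to $\la^*$ after removal of a vertical strip of size $v$ from the first $k$ columns can produce a partition of size $|\la|$ — largely parallels the treatment in \cite{BKT3} with the multiplicities $N(\cdot,\cdot)$ replaced by $\wh N(\cdot,\cdot)$ to accommodate the positive-type correction unique to the even case $K=2k$.

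For uniqueness, suppose the identity $\sum a_{p,\mu}\,c_p\,[Y_\mu]=0$ holds in $\IH(\OG_K)_\Q$ with the prescribed support conditions. Expanding each product via \eqref{whYpierieq} and using the $\Z$-linear independence of the Schubert classes $[Y_\nu]$ in the stable cohomology ring gives a linear system in the $a_{p,\mu}$. Proceeding again by induction downward on $\la_1$, one isolates the coefficient of $[Y_\nu]$ for $\nu$ of maximal first part, forcing the corresponding $a_{p,\mu}$ to vanish, and iterates to conclude that all $a_{p,\mu}=0$.
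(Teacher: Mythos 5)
Your overall strategy — solve for $[Y_\la]$ in the Pieri expansion of $c_{\la_1}\cdot[Y_{\la^*}]$, observe that every correction term $[Y_\nu]$ has $\nu_1>\la_1$ and $\nu^*\subset\la^*$, and recurse — is exactly the argument the paper intends (its proof is a one-line reference to \cite[Prop.\ 3]{BKT3}, whose proof has this shape), and the existence statement together with the support condition $a_{p,\mu}=0$ for $\mu\not\subset\la^*$ does come out of it. The combinatorial claim you worry about is in fact immediate: the horizontal-strip condition gives $\nu_{i+1}\leq\rho_i\leq\la_{i+1}$, hence $\nu^*\subset\la^*$, and then $\nu_1=|\la|-|\nu^*|\geq\la_1$ with equality forcing $\nu=\la$. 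In the uniqueness argument you should isolate the class $[Y_{(p_0,\mu_0)}]$ for $p_0$ \emph{minimal} (not maximal) with $a_{p_0,\mu_0}\neq 0$: since every $[Y_\kappa]$ occurring in $c_p[Y_\mu]$ has $\kappa_1\geq p$ with equality only for $\kappa=(p,\mu)$, only the single term $c_{p_0}[Y_{\mu_0}]$ contributes to $[Y_{(p_0,\mu_0)}]$; taking a maximal first part isolates nothing, because classes with large first part receive contributions from many products.

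The genuine gap is the degree bound $a_{p,\mu}=0$ for $p\geq 2n+2k$. This does \emph{not} follow from the recursion, and ``invoking Lemma~\ref{lem:only} at each recursive step'' is not meaningful: Lemma~\ref{lem:only} is a statement about the raising-operator polynomial $R^\la c_\la$, whereas your recursion is driven by the Pieri rule, and individual branches of the recursion genuinely produce pairs $(p,\mu)$ with $p\geq 2n+2k$. For instance, take $k=2$, $n=4$, $m=3$, so $2n+2k=12$, and $\la=(6,5,2)$ with $|\la|=13$. One checks $(5,2)\to(11,2)$, so $[Y_{(11,2)}]$ occurs as a correction term for $c_6\cdot[Y_{(5,2)}]$; then $(2)\to(13)$ occurs in $c_{11}\cdot[Y_{(2)}]$, and $[Y_{(13)}]=\tfrac12 c_{13}\cdot[Y_\emptyset]$ contributes a term with $p=13\geq 12$. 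The proposition asserts that all such contributions cancel in the total, which your argument does not show. The intended route is to use the uniqueness you have already established: it suffices to exhibit \emph{some} expansion of $[Y_\la]$ in the classes $c_p[Y_\mu]$ of the prescribed form in which no $c_p$ with $p\geq 2n+2k$ appears, and this is where Lemma~\ref{lem:only} (applied to the Giambelli expression $2^{\ell_k(\la)}[Y_\la]=R^\la c_\la$, whose monomials $c_\al$ involve only indices $\al_i\leq 2n+2k-1$ and have $\al_1\geq\la_1$) enters — one peels off the factor $c_{\al_1}$ and expands the remaining product in Schubert classes. Even this requires care (one must check the resulting pairs are of the allowed form, or argue on the span of the $c_p[Y_\mu]$ with $p<2n+2k$), so the degree bound deserves its own argument rather than a clause at the end of the existence proof.
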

\begin{proof}
  The proof is the same as that of \cite[Prop.\ 3]{BKT3}.
\end{proof}

Next, we give the even orthogonal analogue of \cite[Prop.\ 5]{BKT3}. If 
$\la$ is a $k$-strict partition of positive type, we let $\la-k$ denote 
the partition obtained by removing one part equal to $k$ from $\la$.

\begin{prop}\label{piprop}
  There exists a unique ring homomorphism
  \[
  \pi : \IH(\OG_k(\mathrm{even})) \to \QH(\OG(n+1-k,2n+2))
  \]
  such that the following relations are satisfied:
  \begin{align*}
  \pi(\tau_i) &=
  \begin{cases}
    \tau_i & \text{if $1\leq i \leq n+k$}, \\
    0 & \text{if $n+k< i < 2n+2k$}, \\
    0 & \text{if $i$ is odd and $i>2n+2k$},
  \end{cases}\\
  \pi(\tau'_k) &= \tau'_k.
  \end{align*}
  Furthermore, we have $\pi(\tau_\la) = \tau_\la$ for each $\la \in
  \wt{\cP}(k,n)$.
\end{prop}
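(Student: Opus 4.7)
The plan is to construct $\pi$ by first assigning values on the generators $\tau_i$ and $\tau'_k$ of $\IH(\OG_k(\mathrm{even}))$, and then verifying that the defining relations (\ref{stabrel1}) and (\ref{stabrel2}) hold in $\QH(\OG)$ after applying the assignment. I would first set $\pi(\tau_i) = \tau_i$ for $1 \leq i \leq n+k$ and $\pi(\tau'_k) = \tau'_k$, together with $\pi(\tau_i) = 0$ for $n+k < i < 2n+2k$ and for odd $i > 2n+2k$, as prescribed in the statement. For the remaining generators $\tau_{2r}$ with $r \geq n+k$, I would use the stable relation (\ref{stabrel1}) at index $r$ as a recursive definition, solving for $\tau_{2r}$ in terms of $\tau_r^2$ and the products $\tau_{r+i}\, c_{r-i}$ with $1 \leq i < r$, all previously assigned by induction on $r$. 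With this, relations (\ref{stabrel1}) at indices $r \geq n+k$ hold in $\QH(\OG)$ by construction, and uniqueness of $\pi$ is immediate.

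The core technical task is then to verify that relations (\ref{stabrel1}) at indices $k < r < n+k$, together with relation (\ref{stabrel2}), survive in $\QH(\OG)$. Modulo the ideal generated by the quantum parameter(s), the stable relations reduce to the classical cohomology presentation of $\HH^*(\OG)$ from \cite{BKT1}, so in each case it suffices to show that the quantum corrections sum to zero. I would compute these corrections directly from the quantum Pieri formula (\ref{eqn:qchernpieri}), or (in the $K=2$ case) from equations (\ref{gen1})--(\ref{special2}), by expanding each product $\tau_{r+i} \cdot c_{r-i}$ in (\ref{stabrel1}) and each $\tau_{k+i}\cdot \tau_{k-i}$ in (\ref{stabrel2}). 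Careful inspection should show that the partitions $\nu \in P'(m,N)$ which yield nonzero quantum contributions come in matching pairs whose total contribution cancels, with the $q^2$-terms (when $\la_1 = n+k$) paired similarly. Proposition~\ref{qprop-1} and its $K=2$ variant described in the remark provide the organizing principle for the $(-1)$-eigenspace contributions and control the interaction between the $\wh{\tau}$ and $\wt{\tau}$ classes in (\ref{gen1})--(\ref{gen2}).

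Once existence of $\pi$ is established, the final identification $\pi(\tau_\la) = \tau_\la$ for $\la \in \wt{\cP}(k,n)$ follows by induction on $|\la|$ via Proposition~\ref{prop:recurse}. Writing $\tau_\la = \sum_{p,\mu} a_{p,\mu}\, c_p\, \tau_\mu$ in the stable ring with $\mu \subsetneq \la$, and noting via Lemma~\ref{lem:only} that only Chern classes $c_p$ with $p \leq 2n+2k-1$ appear, I would apply $\pi$ and use the inductive hypothesis to obtain $\pi(\tau_\la) = \sum a_{p,\mu}\, c_p\, \tau_\mu$ in $\QH(\OG)$. The classical Giambelli formula (Theorem~\ref{mainthm}) gives the same identity in $\HH^*(\OG)$ with $\tau_\la$ on the left, so $\pi(\tau_\la) - \tau_\la$ lies in the ideal generated by the quantum parameters. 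The constraint $p < 2n+2k$ rules out the $q^2$-contributions in (\ref{eqn:qchernpieri}), and the same cancellation analysis used in verifying the defining relations then shows the remaining $q$- and $q_1 q_2$-corrections also vanish.

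The principal difficulty will be establishing the cancellation of quantum corrections, both in verifying the defining relations and in the inductive step for $\pi(\tau_\la) = \tau_\la$. The even orthogonal setting is delicate because contributions from $P'(m,N)$-partitions $\nu$ of mixed type (where $\wt{\nu}$ may or may not have positive type) introduce genuine $(-1)$-eigenspace contributions that must be tracked carefully; Proposition~\ref{qprop-1} is the essential tool here. The exceptional case $K=2$, where $q$ splits into $q_1, q_2$, requires separate but parallel treatment using (\ref{gen1})--(\ref{special2}) and the remark following Proposition~\ref{qprop-1}, with the special products (\ref{special1})--(\ref{special2}) handled directly.
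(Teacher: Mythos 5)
Your overall architecture matches the paper's: define $\pi$ on the generators, use the stable relations \eqref{stabrel1} at $r \geq n+k$ to pin down $\pi(\tau_{2r})$ recursively (whence uniqueness), verify that the remaining relations are killed by $\pi$, and then propagate $\pi(\tau_\la)=\tau_\la$ by induction using Proposition~\ref{prop:recurse}, Lemma~\ref{lem:only}, and the quantum Pieri rules. Two points, however, deserve correction.

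First, a genuine gap in the final induction: you write ``$\tau_\la = \sum_{p,\mu} a_{p,\mu}\, c_p\, \tau_\mu$ in the stable ring'' as an instance of Proposition~\ref{prop:recurse}. That proposition produces a recursion for $[Y_\la]$, i.e.\ for $\wh\tau_\la = \ta_\la + \ta'_\la$ when $\la$ has positive type, \emph{not} for the individual typed class $\tau_\la$; no such recursion for $\tau_\la$ is available from it, and the two classes $\ta_\la,\ta'_\la$ of a positive-type $\la$ cannot be separated by Chern-class multiplications alone (they span the $\iota$-invariant part). The repair, which is what the paper does, is to run \emph{two} recursions: one for $\wh\tau_\la$ in $\IH(\OG_k(\mathrm{even}))_\Q$, and one for $\sigma_{\la-k}$ in $\IH(\OG_k(\mathrm{odd}))_\Q$, transported to a recursion for $\wt\tau_\la = \ta_\la - \ta'_\la$ via the module isomorphism of Proposition~\ref{prop-1}; one then recovers $\tau_\la$ from \eqref{whwtta}. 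You correctly sense that Proposition~\ref{qprop-1} governs the $(-1)$-eigenspace, but your induction as written never produces the needed identity for $\wt\tau_\la$. (Also, the induction is on $\ell(\la)$, not $|\la|$, since the recursion lowers length, not weight; and the detour through Theorem~\ref{mainthm} is unnecessary once the two recursions are applied termwise.)

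Second, a point of mechanism rather than correctness: for the relations \eqref{stabrel1} with $k<r<n+k$ and \eqref{stabrel2}, the terms involve only one-row (special) classes, so for $k<n-1$ a length count shows that no partition $\nu\in P'(m,N)$ with $\ell(\nu)=m+1$ can arise and the individual terms carry \emph{no} $q$-correction at all --- there is nothing to cancel. The only case requiring an actual cancellation is the extremal quadric case, where the correction is the explicit alternating sum $1-2+2-\cdots\pm2\mp1=0$. Your proposed ``matching pairs of $\nu\in P'(m,N)$'' would have you hunting for a pairing that, outside the quadric case, is vacuous; this would not make the proof fail, but it obscures the degree/length obstruction that makes the verification short.
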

\begin{proof}
  The relations \eqref{stabrel1}--\eqref{stabrel2}
  for $r \geq n+k$ uniquely specify
  the values $\pi(\tau_i)$ for even integers $i \geq 2n+2k$.  We
  must show that the remaining relations for $k<r<n+k$ are mapped to
  zero by $\pi$. When $k<n-1$ the individual terms in
  these relations carry no $q$ correction.
  It remains only to consider the
  case $k=n-1$, and $\OG=\OG(1,2n+2)$ is a quadric.
  The relation in degree $2n$ is treated in \cite[Thms.\ 3.5, A.2]{BKT1},
  and the other relations are handled similarly, with the expression in
  degree $2r$ ($n<r<2n-1$) yielding a coefficient of $qc_{2(r-n)+1}$ of
  $1-2+2-\cdots \pm2\mp1=0$.

  Let $\la$ be any $k$-strict partition contained in an
  $(n+1-k)\times(n+k)$ rectangle. It follows from
  Proposition~\ref{prop:recurse} that
  \begin{equation}\label{eqn:rechat}
    \wh\tau_\la = \sum_{\la_1 \leq p < 2n+2k,\ \mu \subset \la^*} a_{p,\mu}\,
    c_p\, \wh\tau_\mu 
  \end{equation}
  holds in $\IH(\OG_k(\mathrm{even}))_{\Q}$. If $\la$ has positive type,
  then Proposition~\ref{prop:recurse} applied to $\OG(n-k,2n+1)$ also
  gives
  \begin{equation}
  \label{eqn:sigma}
  \sigma_{\la-k} = \sum_{p, \mu} a'_{p,\mu}\, c_p\, \sigma_\mu
  \end{equation}
  in $\IH(\OG_k(\mathrm{odd}))_{\Q}$. Using Proposition~\ref{prop-1}, equation
  (\ref{eqn:sigma}) implies that
  \begin{equation}\label{eqn:recdiff}
    \wt\tau_\la = \sum_{p,\mu} a'_{p,\mu}\, c_p\, \wt\tau_{\mu+k}
  \end{equation}
  holds in $\IH(\OG_k(\mathrm{even}))_{\Q}$.  Observe that we have $\mu
  \subset (\la-k)^*$ for each partition $\mu$ appearing in the sum
  (\ref{eqn:recdiff}), and hence also $\mu+k \subset \la^*$.  

  To prove that $\pi(\tau_\la) = \tau_\la$, it is enough to show that
  $\pi(\wh\tau_\la) = \wh\tau_\la$ and $\pi(\wt\tau_\la) =
  \wt\tau_\la$. We argue by induction on $\ell(\la)$, the case
  $\ell(\la)=1$ being clear. When $\la$ has more than one part, we
  apply the ring homomorphism $\pi$ to both sides of
  (\ref{eqn:rechat}) and (\ref{eqn:recdiff}) and use the inductive 
  hypothesis to show that 
\[
   \pi(\wh\tau_\la) = \sum_{p, \mu} a_{p,\mu}\,
    c_p\, \wh\tau_\mu 
\ \ \ \ \mathrm{and} \ \ \ \ 
   \pi(\wt\tau_\la) = \sum_{p,\mu} a'_{p,\mu}\, c_p\, \wt\tau_{\mu+k}
\]
hold in $\QH(\OG(n+1-k,2n+2),\Q)$. The quantum Pieri rules of \S
\ref{S:qpieri} imply that none of the products appearing in these sums
involve $q$ correction terms. Specifically, when $K \geq 3$ this
follows from (\ref{eqn:qchernpieri}) and Proposition \ref{qprop-1},
while the case $K=2$ uses (\ref{gen1}), (\ref{gen2}),
(\ref{special1}), and (\ref{special2}). This completes the proof.
\end{proof}

Theorem \ref{qgiam} follows immediately from
Proposition~\ref{piprop} and Lemma~\ref{lem:only}.

\section{Eta Polynomials}
\label{etasec}

\subsection{}
\label{theta1}
Given any power series $\sum_{i \geq 0} c_i t^i$ in the variable $t$
and an integer sequence $\al = (\al_1,\al_2,\dots,\al_\ell)$, we write
$c_\al = c_{\al_1} c_{\al_2} \cdots c_{\al_\ell}$ and set $R\,c_{\al}
= c_{R\al}$ for any raising operator $R$.  We will always work with
power series with constant term 1, so that $c_0=1$ and $c_i=0$ for
$i<0$.

Let $x=(x_1,x_2,\ldots)$ be a list of commuting
independent variables and let $\Lambda=\Lambda(x)$ be the ring of
symmetric functions in $x$. Consider the generating series
\[
\prod_{i=1}^{\infty}(1+x_it) = \sum_{r=0}^{\infty}
e_r(x)t^r
\]
for the elementary symmetric functions $e_r$.  If $\la$ is any partition,
let $\la'$ denote the partition conjugate to $\la$, and define the 
Schur $S$-function $s_{\la'}(x)$ by the equation
\[
s_{\la'} = \prod_{i<j} (1-R_{ij})\, e_\la .
\]
Moreover, define the functions $q_r(x)$ by the generating series
\[
\prod_{i=1}^{\infty}\frac{1+x_it}{1-x_it} 
= \sum_{r=0}^{\infty}q_r(x)t^r
\]
and let $\Gamma = \Z[q_1,q_2,\ldots]$.
Given any strict partition $\la$ of length $\ell(\la)$, the Schur $Q$-function 
$Q_\la(x)$ is defined by the equation
\[
Q_\la = \prod_{i<j}\frac{1-R_{ij}}{1+R_{ij}}\, q_\la
\]
and the $P$-function is given by $P_\la(x)= 2^{-\ell(\la)}\,Q_\la(x)$.

Fix an integer $k\geq 1$, let $y=(y_1,\ldots,y_k)$ and
$\Lambda_y=\Z[y_1,\ldots,y_k]^{S_k}$. For each integer $r$, define
$\ti_r=\ti_r(x\,;y)$ by
\[
\ti_r = \sum_{i\geq 0} q_{r-i}(x) e_i(y).
\]
We let $\Gamma_y=\Gamma\otimes_\Z\Lambda_y$ and
$\Gamma^{(k)}$ be the subring of $\Gamma_y$ generated by the $\ti_r$:
\[
\Gamma^{(k)}= \Z[\ti_1,\ti_2,\ti_3,\ldots].
\]
According to \cite[(19)]{BKT2}, we have 
\begin{equation}
\label{t-to-e}
\ti_r^2 + 2\sum_{i=1}^r(-1)^i\ti_{r+i}\ti_{r-i} = e_r(y^2)
\end{equation}
for any integer $r$, where $y^2$ denotes $(y_1^2,\ldots,y_k^2)$.

\begin{prop}
\label{Gyprop}
The $\ti_\la$ for $\la$ a strict partition form a free $\Lambda_y$-basis 
of $\Gamma_y$.
\end{prop}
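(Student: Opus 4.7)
The plan is to reduce the proposition to the analogous statement with $\ti_\la$ replaced by $q_\la$, then conclude via a unitriangularity argument based on the expansion $\ti_r = q_r + \sum_{i=1}^{\min(r,k)} q_{r-i}\,e_i(y)$.

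First, I would establish that $\{q_\la : \la \text{ strict}\}$ is a $\Z$-basis of $\Gamma$. Setting $y=0$ in equation (\ref{t-to-e}) yields the relations $q_r^2 + 2\sum_{i=1}^r (-1)^i q_{r+i} q_{r-i} = 0$. Given a partition $\la$ with a repeated part $r$, this relation expresses $q_\la$ as a $\Z$-linear combination of products obtained from $\la$ by replacing a pair $(r,r)$ by $(r+i,r-i)$ for some $i\geq 1$. The statistic $\sum_j \la_j^2$ strictly increases under each such substitution and is bounded above by $|\la|^2$, so iterating terminates with a $\Z$-linear combination of $q_\mu$ for $\mu$ strict. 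Linear independence follows from $\Gamma\otimes\Q = \Q[p_1,p_3,p_5,\ldots]$: the $\Q$-dimension in degree $n$ is the number of partitions of $n$ into odd parts, which by Euler's theorem equals the number of strict partitions of $n$. Since $\Gamma_y = \Gamma\otimes_\Z\Lambda_y$, this makes $\{q_\la : \la \text{ strict}\}$ automatically a free $\Lambda_y$-basis of $\Gamma_y$.

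Second, I would compute the transition between $\{q_\la\}$ and $\{\ti_\la\}$. Expanding $\ti_\la = \prod_s \ti_{\la_s}$, every non-leading summand arises by choosing in at least one factor a term $q_{\la_s - j_s}\,e_{j_s}(y)$ with $j_s \geq 1$, and hence has total $x$-degree (i.e., degree in the $q_r$'s) strictly less than $|\la|$. Rewriting the resulting non-strict $q_\mu$ in the strict $q$-basis via Step 1 preserves the $x$-degree since the defining relations are degree homogeneous, yielding
\[
\ti_\la = q_\la + \sum_{\substack{\nu \text{ strict}\\ |\nu| < |\la|}} a_{\la\nu}(y)\, q_\nu,
\]
with $a_{\la\nu}(y) \in \Lambda_y$ homogeneous of degree $|\la|-|\nu|$.

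Third, I would conclude by triangularity. Ordering strict partitions by $|\la|$, the transition matrix from $\{\ti_\la\}$ to $\{q_\la\}$, viewed over $\Lambda_y$, has $1$s on the diagonal and off-diagonal entries supported on pairs $(\la,\nu)$ with $|\nu|<|\la|$. In each graded piece of $\Gamma_y$ only finitely many strict partitions contribute, and the resulting finite sub-matrix is unitriangular of determinant $1$, hence invertible over $\Lambda_y$. Therefore $\{\ti_\la : \la \text{ strict}\}$ is likewise a free $\Lambda_y$-basis of $\Gamma_y$. The main obstacle is Step 1, specifically verifying that the quadratic relations suffice to reduce any non-strict $q_\la$ to a $\Z$-combination of strict monomials; this is handled by the monotone statistic $\sum_j \la_j^2$, and the remainder of the argument is routine bigraded bookkeeping.
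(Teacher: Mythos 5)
Your proof is correct and follows essentially the same route as the paper's: both reduce to the fact that the $q_\la$ for $\la$ strict form a $\Z$-basis of $\Gamma$ (hence a $\Lambda_y$-basis of $\Gamma_y$) and then conclude by unitriangularity of the expansion $\ti_\la = q_\la + \sum_{\al\neq 0} q_{\la-\al}\,e_\al(y)$. The only difference is that the paper simply cites Macdonald III.(8.6) for the basis fact about the $q_\la$, whereas you reprove it via the quadratic relations and the Euler dimension count; that sub-argument is also sound.
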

\begin{proof}
It is known e.g.\ from \cite[III.(8.6)]{M} that the $q_\la(x)$ for $\la$
strict form a $\Z$-basis of $\Gamma$, and therefore also a $\Lambda_y$-basis
of $\Gamma_y$. Since 
\[
\ti_{\la}(x\,;y)= q_\la(x) + \sum_{\alpha\neq 0} q_{\la-\alpha}(x)e_{\alpha}(y)
\]
with the sum over nonzero compositions $\al$, we deduce that the 
$\ti_\la$ for $\la$ strict also form a $\Lambda_y$-basis of $\Gamma_y$.
\end{proof}

Following \cite[Prop.\ 5.2]{BKT2}, the $\ti_{\la}$ for $\la$
$k$-strict form a $\Z$-basis of $\Gamma^{(k)}$.  For any $k$-strict
partition $\la$, the theta polynomial $\Ti_\la(x\,;y)$ is defined by
$\Ti_{\la} =\wt{R}^\la\ti_\la$.  The polynomials $\Theta_\la$ for all
$k$-strict partitions $\la$ form another $\Z$-basis of $\Gamma^{(k)}$.

\subsection{}
\label{eta1}
Recall that $P_0=1$ and for each integer $r\geq 1$, we have $P_r = q_r/2$.
Set
\[
\eta_r(x\,;y) = \begin{cases}
e_r(y) + 2\sum_{i=0}^{r-1} P_{r-i}(x)e_i(y) & \text{if $r<k$}, \\
 \sum_{i=0}^r P_{r-i}(x) e_i(y) & \text{if $r\geq k$}
\end{cases}
\]
and $\eta'_k(x\,;y) =  \sum_{i=0}^{k-1} P_{k-i}(x) e_i(y)$. 
Observe that we have, for any $r\geq 0$, 
\begin{equation}
\label{thetatoeta}
\ti_r=
\begin{cases}
\eta_r &\text{if $r< k$},\\
\eta_k+\eta_k' &\text{if $r=k$},\\
2\eta_r &\text{if $r> k$},
\end{cases}
\end{equation}
while $\eta_k-\eta'_k=e_k(y)$. Define the ring of eta polynomials
\[
B^{(k)} = \Z[\eta_1,\ldots, \eta_{k-1},\eta_k,\eta'_k,\eta_{k+1}\ldots].
\]

\begin{prop}
\label{basispr}
The $\Q$-algebra $B^{(k)}_\Q$ is a free $\Gamma^{(k)}_\Q$-module with basis
$1$, $e_k(y)$.
\end{prop}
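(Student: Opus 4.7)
The plan is to establish (a) generation: $B^{(k)}_\Q = \Gamma^{(k)}_\Q + \Gamma^{(k)}_\Q \cdot e_k(y)$, and (b) linear independence: $\{1, e_k(y)\}$ is $\Gamma^{(k)}_\Q$-linearly independent in $\Gamma_{y,\Q}$. Part (a) follows quickly from the relations on hand: over $\Q$ the generators of $B^{(k)}$ rewrite as $\eta_r = \ti_r$ for $r<k$, $\eta_r = \frac{1}{2}\ti_r$ for $r>k$, and $\eta_k,\eta'_k = \frac{1}{2}(\ti_k \pm e_k(y))$, each lying in $\Gamma^{(k)}_\Q + \Gamma^{(k)}_\Q\cdot e_k(y)$. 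Relation (\ref{t-to-e}) at $r=k$ reads $e_k(y)^2 = e_k(y^2) = \ti_k^2 + 2\sum_{i=1}^k (-1)^i \ti_{k+i}\ti_{k-i}$, which lies in $\Gamma^{(k)}_\Q$, so $\Gamma^{(k)}_\Q + \Gamma^{(k)}_\Q\cdot e_k(y)$ is closed under multiplication, hence a subring of $\Gamma_{y,\Q}$ containing every generator of $B^{(k)}_\Q$.

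For (b), I first show the weaker statement $e_k(y)\notin\Gamma^{(k)}_\Q$. Using the $\Q$-basis $\{\ti_\la\}_{\la\text{ $k$-strict}}$ of $\Gamma^{(k)}_\Q$ and grading, such a membership would force $e_k(y)=\sum_{\la\vdash k} c_\la \ti_\la$ with $c_\la\in\Q$ (every partition of $k$ is $k$-strict). Applying the $\Q$-algebra homomorphism $x\mapsto 0$, which sends $\ti_r$ to $e_r(y)$, yields $e_k(y)=\sum c_\la\prod_i e_{\la_i}(y)$ in $\Lambda_y=\Q[e_1(y),\ldots,e_k(y)]$. The products $\prod_i e_{\la_i}$ are distinct monomials in the algebraically independent generators $e_1,\ldots,e_k$, so the only solution is $c_{(k)}=1$ and $c_\la=0$ otherwise. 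But then $e_k(y)=\ti_k$, and specializing $y=0$ gives $0=q_k(x)$ in $\Gamma_\Q$---a contradiction.

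To upgrade to full linear independence, I argue that $\Gamma^{(k)}_\Q$ is a polynomial ring (hence integrally closed) by identifying an explicit set of algebraically independent generators. For each $r>k$ the relation (\ref{t-to-e}) isolates $\ti_{2r}$ with an invertible coefficient (the term $i=r$), so $\ti_{2r}$ can be solved for as a polynomial in $\ti_s$ with $s<2r$; iterating, $\Gamma^{(k)}_\Q$ is generated by $\{\ti_r:r\text{ odd}\}\cup\{\ti_{2j}:1\le j\le k\}$. The Hilbert series of $\Gamma^{(k)}_\Q$ read off from the $k$-strict basis simplifies to $\prod_{r\text{ odd}}(1-t^r)^{-1}\prod_{j=1}^k(1-t^{2j})^{-1}$, matching the Hilbert series of the polynomial ring on generators of these degrees and so forcing algebraic independence. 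Now $e_k(y)$ satisfies the monic quadratic $T^2-e_k(y^2)=0$ over $\Gamma^{(k)}_\Q$ and is thus integral; integral closure combined with $e_k(y)\notin\Gamma^{(k)}_\Q$ gives $e_k(y)\notin\mathrm{Frac}(\Gamma^{(k)}_\Q)$. Consequently, if $a+b\,e_k(y)=0$ with $a,b\in\Gamma^{(k)}_\Q$ and $b\ne 0$, then $e_k(y)=-a/b$ would lie in $\mathrm{Frac}(\Gamma^{(k)}_\Q)$---contradiction. Hence $b=0$, and then $a=0$.

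The main obstacle is the integral-closure step: $e_k(y)\notin\Gamma^{(k)}_\Q$ is a direct grading argument, but bootstrapping to linear independence requires information beyond generation, and the cleanest route I see is through the polynomial-ring presentation of $\Gamma^{(k)}_\Q$ via Hilbert-series comparison. An alternative, more pedestrian route would be to expand arbitrary $a,b\in\Gamma^{(k)}_\Q$ in the $\ti_\la$ basis and track the $\vartheta_\emptyset$-coefficient in the $\Lambda_y$-decomposition $\Gamma_y=\bigoplus_{\mu\text{ strict}}\Lambda_y\,\ti_\mu$ from Proposition \ref{Gyprop}, but this involves considerably more bookkeeping.
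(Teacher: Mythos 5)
Your proof is correct, but it takes a genuinely different route from the paper's. The paper also reduces to showing $\Gamma^{(k)}_\Q\cap e_k(y)\,\Gamma^{(k)}_\Q=0$, but it does so by staying inside the free $\Lambda_y$-module $\Gamma_y=\bigoplus_{\mu\ \mathrm{strict}}\Lambda_y\,\ti_\mu$ of Proposition \ref{Gyprop}: using (\ref{t-to-e}) it expands each $\ti_\la$ ($\la$ $k$-strict) as $\sum_{\mu,\nu}a_{\mu\nu}\,\ti_\mu\,e_\nu(y^2)$ with $\mu$ strict and $\nu_1\le k$, so the $\Lambda_y$-coordinates of $\Gamma^{(k)}_\Q$ lie in the span of the $e_\nu(y^2)$ while those of $e_k(y)\Gamma^{(k)}_\Q$ lie in $e_k(y)$ times that span, and these two subspaces of $\Lambda_y$ meet trivially. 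You instead prove $e_k(y)\notin\Gamma^{(k)}_\Q$ by the specializations $x\mapsto 0$ and $y\mapsto 0$ (a clean argument), and then bootstrap to linear independence via integral closure, which forces you to establish that $\Gamma^{(k)}_\Q$ is a polynomial ring on $\{\ti_r: r\ \mathrm{odd}\}\cup\{\ti_{2j}: j\le k\}$; your elimination of $\ti_{2r}$ for $r>k$ (valid because $e_r(y^2)=0$ there) and the Hilbert-series identity $\prod_{i\le k}(1-t^i)^{-1}\prod_{i>k}(1+t^i)=\prod_{r\ \mathrm{odd}}(1-t^r)^{-1}\prod_{j=1}^k(1-t^{2j})^{-1}$ both check out. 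Both proofs ultimately rest on the quoted fact that the $\ti_\la$ for $\la$ $k$-strict form a basis of $\Gamma^{(k)}$ — the paper for its parity decomposition, you for the Hilbert series. The trade-off: the paper's argument is shorter and uses only the symmetric-function bookkeeping already in place, whereas yours imports commutative-algebra machinery (UFD/integral closedness of a polynomial ring in countably many variables, and implicitly that $\Gamma_{y,\Q}$ is a domain so the fraction-field step makes sense) but yields the explicit rational presentation of $\Gamma^{(k)}_\Q$ as a useful byproduct. Your fallback suggestion of tracking coefficients in the $\Lambda_y$-decomposition of Proposition \ref{Gyprop} is in fact exactly the paper's route, and involves less bookkeeping than you anticipate once one restricts attention to the $y^2$-parity of the coefficients.
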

\begin{proof}
The definition implies that 
$B^{(k)}_\Q=\Gamma^{(k)}_\Q[e_k(y)]$. We claim that 
\[
\Gamma^{(k)}_\Q\cap e_k(y)\Gamma^{(k)}_\Q=0.
\] 
Indeed, we know that the $\ti_\la$ for $k$-strict partitions $\la$
form a $\Q$-basis of $\Gamma^{(k)}_\Q$.  We deduce from (\ref{t-to-e})
and Proposition \ref{Gyprop} that for each $k$-strict partition $\la$,
there is a unique expression
\begin{equation}
\label{k-to-strict}
\ti_\la(x\,;y) = \sum_{\mu,\nu}a_{\mu\nu}\,\ti_\mu(x\,;y)e_\nu(y^2)
\end{equation}
where the $a_{\mu\nu}$ are integers and the sum is over strict
partitions $\mu$ and partitions $\nu$ with $\nu_1\leq k$ such that
$|\mu|+2|\nu|=|\la|$. The claim follows from this and the fact that
the $\Z$-linear submodules of $\Lambda_y$ spanned by the $e_\nu(y^2)$ and
$e_k(y)e_\nu(y^2)$, respectively, have trivial intersection.  We
deduce that $B^{(k)}_\Q$ is the $\Gamma^{(k)}_\Q$-algebra generated by
$e_k(y)$ modulo the quadratic relation (\ref{t-to-e}). The proposition
now follows by elementary algebra.
\end{proof}

Let $\la$ be a typed $k$-strict partition and let $R$ be any finite
monomial in the operators $R_{ij}$ which appears in the expansion of
the power series $R^\la$ in (\ref{Req}).  If $\type(\la)=0$, then set
$R \star \ti_{\la} = \ti_{R \,\la}$. Suppose that $\type(\la)>0$, let
$d = \ell_k(\la)+1$ be the index such that $\la_d=k < \la_{d-1}$, and
set $\wh{\al} = (\al_1,\ldots,\al_{d-1},\al_{d+1},\ldots,\al_\ell)$
for any integer sequence $\al$ of length $\ell$. If $R$ involves any
factors $R_{ij}$ with $i=d$ or $j=d$, then let $R \star \ti_{\la} =
\frac{1}{2}\,\ti_{R \,\la}$. If $R$ has no such factors, then let
\[
R \star \ti_{\la} = \begin{cases}
\eta_k \,\ti_{\wh{R \,\la}} & \text{if  $\,\type(\la) = 1$}, \\
\eta'_k \, \ti_{\wh{R \,\la}} & \text{if  $\,\type(\la) = 2$}.
\end{cases}
\]

\begin{defn}
\label{etadef}
For any typed $k$-strict partition $\la$, the {\em eta polynomial}
$\Eta_\la=\Eta_\la(x\,;y)$ is the element of $B^{(k)}$ defined by the
raising operator formula 
\[
\Eta_\la =2^{-\ell_k(\la)}R^{\la} \star \ti_{\la}.  
\]
The {\em type} of the polynomial $\Eta_\la$ is the same
as the type of $\la$.
\end{defn}

If $\la$ is a $k$-strict partition, we define the polynomials
$\Eta_\la$, $\Eta'_\la$, $\wh{\Eta}_\la$, and $\wt{\Eta}_\la$ using
the same conventions as in \S \ref{mainpfs} in the case of Schubert
classes. Note that
\[
\wh{\Eta}_\la = 2^{-\ell_k(\la)} R^\la \,\ti_\la
=\begin{cases}
\Eta_\la + \Eta'_\la & \text{if $\la$ has positive type}, \\
\ \quad \Eta_\la & \text{otherwise},
\end{cases}
\]
while if $\la$ has positive type then 
\[
\wt{\Eta}_\la = \Eta_\la - \Eta'_\la =
 2^{-\ell_k(\la)} e_k(y) \,\wt{R}^{\la-k} \,\ti_{\la-k} =
 2^{-\ell_k(\la)} e_k(y) \, \Ti_{\la-k}\,.
\]
The raising operator expression for $\wh{\Eta}_\la$ implies that
$\Gamma^{(k)}_\Q = \bigoplus_\la \Q\,\wh{\Eta}_\la$, summed over all
$k$-strict partitions $\la$. It follows from Proposition \ref{basispr}
that there is a direct sum decomposition
\[
B^{(k)}_\Q =  \ 
\Gamma^{(k)}_\Q \oplus e_k(y)\,\Gamma^{(k)}_\Q
=\bigoplus_{\la \ \text{$k$-strict}} \Q\,\wh{\Eta}_\la \oplus
\bigoplus_{\substack{\la \ \text{$k$-strict} \\ \text{of positive
      type}}} \Q\,\wt{\Eta}_\la .
\]

For any typed $k$-strict partition $\la$, we have 
\begin{equation}
\label{whwteta}
\Eta_\la = \begin{cases}
\wh{\Eta}_\la & \text{if $\type(\la)=0$}, \\
(\wh{\Eta}_\la + \wt{\Eta}_\la)/2 & \text{if $\type(\la)=1$}, \\
(\wh{\Eta}_\la - \wt{\Eta}_\la)/2 & \text{if $\type(\la)=2$}
\end{cases}
\end{equation}
in analogy with (\ref{whwtta}).
We deduce that $B^{(k)}$ is isomorphic to the stable cohomology ring
$\IH(\OG_k(\text{even}))$ via the map which sends $\Eta_\la$ to
$\ta_\la$, and that the polynomials $\Eta_\la$ indexed by typed
$k$-strict partitions $\la$ form a $\Z$-basis of $B^{(k)}$. Indeed,
the relations (\ref{stabrel1}) and (\ref{stabrel2}) are satisfied by
the $\eta_r$, $r\geq 1$ and $\eta'_k$, as follows immediately from the
equations (\ref{t-to-e}) for the values $r\geq k$. This completes the
proof of Theorem \ref{productthm}.

\subsection{}
\label{theta4}
Define the elements $S_{\la}(x\,;y)$ and $Q_\la(x\,;y)$ of 
$\Gamma^{(k)}$ by the equations
\[
S_{\la}(x\,;y) = 
\prod_{i<j}(1-R_{ij})\, \ti_\la = \det(\ti_{\la_i+j-i})_{i,j}
\]
and 
\[
Q_{\la}(x\,;y) = \prod_{i<j}\frac{1-R_{ij}}{1+R_{ij}}\, \ti_\la.
\]
These polynomials were studied in \cite[Thm.\ 3 and Prop.\ 5.9]{BKT2}.
Using equation (\ref{whwteta}), we easily obtain the following result.

\begin{prop}  
\label{propSQ}
Let $\la$ be a typed $k$-strict partition of length $\ell$.

\medskip
\noindent
{\em(a)} If $\la_i + \la_j < 2k+j-i$ for all $i<j$, then we have
\[
H_\la(x\,;y) = 
\begin{cases}
S_\la(x\,;y) & \text{{\em if} $\la_1<k$}, \\
\frac{1}{2}\,S_\la(x\,;y) + \frac{1}{2}\, e_k(y) \,S_{\la-k}(x\,;y) & 
 \text{{\em if} $\type(\la)=1$}, \\
\frac{1}{2}\,S_\la(x\,;y) - \frac{1}{2}\,e_k(y)\,S_{\la-k}(x\,;y) & 
 \text{{\em if} $\type(\la)=2$}, \\
\frac{1}{2}\,S_\la(x\,;y) & \text{{\em if} $\la_1>k$}.
\end{cases}
\]

\noindent
{\em(b)} If $\la_i + \la_j \geq 2k+j-i$ for all $i<j\leq \ell$, 
then we have
\[
H_\la(x\,;y) = 
\begin{cases}
2^{-\ell}\,Q_\la(x\,;y) & \text{{\em if} $\la_\ell > k$}, \\
2^{-\ell}\,Q_\la(x\,;y) + 2^{-\ell} \, e_k(y)\,Q_{\la-k}(x\,;y) & 
 \text{{\em if} $\type(\la)=1$}, \\
2^{-\ell}\,Q_\la(x\,;y) - 2^{-\ell} \, e_k(y)\,Q_{\la-k}(x\,;y) & 
 \text{{\em if} $\type(\la)=2$}, \\
2^{1-\ell}\,Q_\la(x\,;y) & \text{{\em if} $\la_\ell < k$}.
\end{cases}
\]
\end{prop}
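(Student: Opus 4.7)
My plan is to reduce the identities in both parts to the two raising-operator formulas
\[
\wh{\Eta}_\la = 2^{-\ell_k(\la)}\, R^\la\, \ti_\la, \qquad
\wt{\Eta}_\la = 2^{-\ell_k(\la)}\, e_k(y)\, \Ti_{\la-k}
\]
(the second valid when $\la$ has positive type) established in \S\ref{eta1}, together with the combination rule \eqref{whwteta}. Under each hypothesis the operators $R^\la$ and $\wt{R}^{\la-k}$ should collapse to the defining expressions for $S_\nu$ or $Q_\nu$, after which all that remains is to pin down $\ell_k(\la)$ in each subcase.

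For part (a), the hypothesis $\la_i + \la_j < 2k + j - i$ makes the product $\prod_{\la_i+\la_j \geq 2k+j-i}(1+R_{ij})^{-1}$ in \eqref{Req} empty, so $R^\la\,\ti_\la = \prod_{i<j}(1-R_{ij})\,\ti_\la = S_\la$. When $\la$ has positive type, the same reasoning applied to $\la - k$, whose pair sums satisfy the analogous inequality shifted by one index, gives $\Ti_{\la-k} = S_{\la-k}$. I would then argue that $\ell_k(\la) \leq 1$: if $\la_1,\la_2 > k$, the pair $(1,2)$ would yield $\la_1 + \la_2 \geq 2k+1$, and if $\la_1 > k$ with some $\la_j = k$, taking the smallest such $j$ forces $\la_{j-1} > k$ with $\la_{j-1} + k \geq 2k+1$, each contradicting the hypothesis. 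Consequently $\ell_k(\la) = 0$ when $\la_1 \leq k$, $\ell_k(\la) = 1$ when $\la_1 > k$ (and in the latter case $\la$ has type $0$). Substituting into \eqref{whwteta} yields the four formulas.

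For part (b), every pair $i<j\leq \ell$ lies in the second product of $R^\la$ by hypothesis, while the factors $(1-R_{ij})$ with $j > \ell$ act trivially since $\ti_{-1}=0$; hence $R^\la\,\ti_\la = \prod_{i<j\leq\ell}(1-R_{ij})(1+R_{ij})^{-1}\ti_\la = Q_\la$. Assuming $\la$ has positive type and setting $d = \ell_k(\la) + 1$, I would first show $d = \ell$: applying the hypothesis to any pair $(d,j)$ with $j > d$ gives $\la_j \geq k + (j-d) > k$, contradicting $\la_d = k \geq \la_j$. Thus $\la_\ell = k$, and applying the hypothesis to each pair $(i,\ell)$ yields $\la_i \geq k + \ell - i$ for $i<\ell$. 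Consequently, for every pair $i < j \leq \ell-1$,
\[
(\la-k)_i + (\la-k)_j = \la_i + \la_j \geq (k+\ell-i) + (k+\ell-j) \geq (2k+1) + (j-i),
\]
the last step using $\ell \geq j+1$. This places every pair of $\la-k$ into the second product of $\wt{R}^{\la-k}$ (with $K=2k+1$), so $\Ti_{\la-k} = Q_{\la-k}$. Finally $\ell_k(\la) = \ell$ if $\la_\ell > k$ and $\ell_k(\la) = \ell - 1$ if $\la_\ell \leq k$, and assembling everything with \eqref{whwteta} gives the four formulas of part (b).

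The subtlest step is the chain of inequalities in case (b) that upgrades the weak bound $\la_i + \la_j \geq 2k + j - i$ (which is exactly the hypothesis) to the stricter bound $(\la-k)_i + (\la-k)_j \geq (2k+1) + (j-i)$ required for $\wt{R}^{\la-k}$ to realize the full $Q_{\la-k}$. Once that is in hand, everything else is bookkeeping across the type and boundary cases at $\la_1$ and $\la_\ell$.
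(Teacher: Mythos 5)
Your proposal is correct and follows essentially the same route as the paper, which derives the proposition directly from equation (\ref{whwteta}) together with the raising-operator expressions for $\wh{\Eta}_\la$ and $\wt{\Eta}_\la$; you have simply written out the case analysis (the collapse of $R^\la$ and $\wt{R}^{\la-k}$, and the determination of $\ell_k(\la)$ and of the possible types) that the paper leaves implicit. The details you supply, including the inequality chain upgrading the bound for $\la-k$ in part (b), all check out.
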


\section{Schubert polynomials for even orthogonal Grassmannians}\label{BH}

\subsection{}\label{bh1}

In this section, we prove that the polynomials $\Eta_\la(x\,;y)$ are
special cases of Billey-Haiman type D Schubert polynomials
$\DS_w(x,z)$. Let $\wt{W}_{n+1}$ be the Weyl group for the root system
of type $\text{D}_{n+1}$. The elements of $\wt{W}_{n+1}$ may be
represented as signed permutations of the set $\{1,\ldots,n+1\}$; we
will denote a sign change by a bar over the corresponding entry. The
group $\wt{W}_{n+1}$ is generated by the simple transpositions
$s_i=(i,i+1)$ for $1\leq i\leq n$, and an element $s_0$ which acts on
the right by
\[
(u_1,u_2,\ldots,u_{n+1})s_0 = (\ov{u}_2, \ov{u}_1, u_3 , \ldots, u_{n+1}).
\]
Set $\wt{W}_\infty = \bigcup_n\wt{W}_n$ and let $w \in \wt W_\infty$.  
A {\em reduced factorization} of $w$ is a
product $w = u v$ in $\wt W_\infty$ such that $\ell(w) = \ell(u) +
\ell(v)$.  A {\em reduced word} of $w\in\wt{W}_\infty$ is a sequence
$a_1\cdots a_\ell$ of elements in $\N$ such that $w=s_{a_1}\cdots
s_{a_\ell}$ and $\ell=\ell(w)$.  
If we convert all the $0$'s which appear in the reduced word
$a_1\ldots a_r$ to $1$'s, we obtain a {\em flattened word} of $w$.
For example, $23012$ is a reduced word of $\ov{1}4\ov{3}2$, and
$23112$ is the corresponding flattened word (note that the flattened 
word need not be reduced). We say that $w$ has a {\em descent} at
position $r\geq 0$ if $\ell(ws_r)<\ell(w)$, where $s_r$ is the simple
reflection indexed by $r$.  

For $k \neq 1$, an element $w\in\wt{W}_\infty$ is $k$-Grassmannian if
$\ell(ws_i)=\ell(w)+1$ for all $i\neq k$. We say that $w$ is
$1$-Grassmannian if $\ell(ws_i)=\ell(w)+1$ for all $i\geq 2$.  The
elements of $\wt{W}_{n+1}$ index the Schubert classes in the
cohomology ring of the flag variety $\SO_{2n+2}/B$, which contains
$\HH^*(\OG(n+1-k,2n+2),\Z)$ as the subring spanned by Schubert classes
given by $k$-Grassmannian elements.  In particular, each typed
$k$-strict partition $\la$ in $\wt{\cP}(k,n)$ corresponds to a
$k$-Grassmannian element $w_\la \in \wt W_{n+1}$ which we proceed to
describe; more details and relations to other indexing conventions can
be found in \cite[\S 6]{T1}.

Given any typed $k$-strict partition $\la$, we let $\la^1$ be the strict
partition obtained by removing the first $k$ columns of $\la$, and let
$\la^2$ be the partition of boxes contained in the first $k$ columns
of $\la$.
\[ \pic{0.50}{lapair} \]
A typed $k$-strict partition $\la$ belongs to $\wt{\cP}(k,n)$ if and
only if its Young diagram fits inside the shape $\Pi$ obtained by
attaching an $(n+1-k)\times k$ rectangle to the left side of a staircase
partition with $n$ rows.  When $n=7$ and $k=3$, this shape looks as
follows.
\[ \Pi \ \ = \ \ \ \raisebox{-36pt}{\pic{.6}{piD}} \] The boxes of the
staircase partition that are outside $\la$ form south-west to
north-east diagonals.  Such a diagonal is called {\em related} if it
is $K$-related to one of the bottom boxes in the first $k$ columns of
$\la$, or to any box $[0,i]$ for which $\la_1 < i \leq k$; the
remaining diagonals are {\em non-related}.  Let $r_1 < r_2 < \dots <
r_k$ denote the lengths of the related diagonals, let $u_1 < u_2 <
\dots < u_t$ be the lengths of the non-related diagonals, and set
$p=\ell(\la^1) = \ell_k(\la)$.  If $\type(\la)$ is non-zero, then $t =
n-k-p$.  If $\type(\la)=1$, then the $k$-Grassmannian element
corresponding to $\la$ is given by
\[
w_\la =
(r_1+1,\dots,r_k+1, \ov{(\la^1)_1+1}, \dots, \ov{(\la^1)_p+1},\wh{1},
u_1+1,\dots,u_{n-k-p}+1) \,,
\] 
while if $\type(\la)=2$, then 
\[
w_\la =
(\ov{r_1+1},\dots,r_k+1, \ov{(\la^1)_1+1}, \dots, \ov{(\la^1)_p+1},\wh{1},
u_1+1,\dots,u_{n-k-p}+1) \,.
\]
Here we use the convention that $\wh{1}$ is equal to either $1$ or
$\ov{1}$, determined so that $w_\la$ contains an even number of barred
integers.  Finally, if $\type(\la)=0$, then $r_1=0$, i.e., one of the
related diagonals has length zero.  In this case we have $t = n-k-p+1$
and
\[
w_\la =
(\wh{1},r_2+1, \dots,r_k+1, \ov{(\la^1)_1+1}, \dots, \ov{(\la^1)_p+1},
u_1+1,\dots,u_{n+1-k-p}+1) \,.
\]
The element $w_\la \in \wt{W}_\infty$ depends on $\la$ and $k$, but is
independent of $n$.

\begin{example}
  The element $\la = (7,4,3,2) \in \wt{\cP}(3,7)$ of type $2$
  corresponds to $w_\la = (\ov{3},6,7,\ov{5},\ov{2},\ov{1},4,8)$.
  \[
  \lambda \ = \ \ \raisebox{-53pt}{\pic{.6}{lamdiagD}}
  \]
\end{example}

\subsection{}\label{bh2}

We say that a sequence $a=(a_1,\ldots,a_m)$ is {\em unimodal} if for some
$r \leq m$, we have
\[
a_1 > a_2 > \cdots > a_{r-1} \geq a_r < a_{r+1} < \cdots < a_m,
\]
and if $a_{r-1}=a_r$ then $a_r=1$.

Let $w\in \wt{W}_n$ and $\la$ be a Young diagram with $r$ rows and 
$\ell(w)$ boxes. A {\em Kra\'skiewicz-Lam tableau} for $w$ of shape $\la$
is a filling $T$ of the boxes of $\lambda$ with positive integers
in such a way that

\medskip
\noindent
a) If $t_i$ is the sequence of entries in the $i$-th row of $T$,
reading from left to right, then the row word $t_r\ldots t_1$ is
a flattened word for $w$.

\medskip
\noindent
b) For each $i$, $t_i$ is a unimodal subsequence of maximum length
in $t_r \ldots t_{i+1} t_i$.

\medskip
\noin
If $T$ is a Kra\'skiewicz-Lam tableau of shape $\la$ with row
word $a_1\ldots a_\ell$, set $m(T) = \ell(\la)+1-p$, where $p$ is
the number of distinct values of $s_{a_1}\cdots s_{a_j}(1)$ for $0\leq
j\leq \ell$. It follows from \cite[Thm.\ 4.35]{La} that $m(T)\geq 0$.

For each $w\in \wt{W}_{\infty}$ one has a {\em type D Stanley symmetric
function} $E_w(x)$, which is a positive linear combination of Schur
$P$-functions \cite{BH,FK,La}. In particular, Lam \cite{La} has shown that
\begin{equation}
\label{Eeq}
E_w(x) = \sum_{\la} d_w^{\la} \, P_{\la}(x)
\end{equation}
where $d_w^{\la}= \sum_T 2^{m(T)}$, summed over all Kra\'skiewicz-Lam
tableaux $T$ for $w$ of shape $\la$.

\begin{example}\label{maxGr}
  Let $\la$ be a strict partition of length $\ell$ with $\la_1\leq n$,
  and let $\mu$ be the strict partition whose parts are the numbers
  from $1$ to $n+1$ which do not belong to the set
  $\{1,\la_\ell+1,\ldots,\la_1+1\}$.  Then the signed permutation
  \[
  w_{\la}=(\ov{\la_1+1},\ldots,\ov{\la_{\ell}+1},
  \wh{1},\mu_{n-\ell},\ldots,\mu_1)
  \]
  is the $0$-Grassmannian element of $\wt{W}_{n+1}$ corresponding to
  $\la$.  There exists a unique Kra\'skiewicz-Lam tableau $T_\la$ for
  $w_\la$.  This tableau has shape $\la$ and its $i$th row contains
  the integers between $1$ and $\la_i$ in decreasing order; moreover,
  we have $m(T_\la)=0$.  For example,
  \[
  T_{(6,5,2)} =
  \young(654321,54321,21)
  \ .
  \]
  We deduce that $E_{w_{\la}}(x) = P_{\la}(x)$.
\end{example}

\subsection{}\label{bh3}

Following Billey and Haiman \cite{BH}, each $w\in \wt{W}_{\infty}$
defines a type D Schubert polynomial $\DS_w(x,z)$.  Here
$z=(z_1,z_2,\ldots)$ is another infinite set of variables and each
$\DS_w$ is a polynomial in the ring
$A=\Z[P_1(x),P_2(x),\ldots;z_1,z_2,\ldots]$.  The polynomials $\DS_w$
for $w\in \wt{W}_{\infty}$ form a $\Z$-basis of $A$, and their algebra
agrees with the Schubert calculus on orthogonal flag varieties
$\SO_{2n}/B$, when $n$ is sufficiently large.  According to
\cite[Thm.\ 4]{BH}, for any $w\in \wt{W}_n$ we have
\begin{equation}
\label{BHgeneq}
\DS_w(x,z) = \sum_{uv=w} E_u(x) \AS_v(z) \,,
\end{equation}
summed over all reduced factorizations $w = uv$ in $\wt{W}_n$ for
which $v\in S_n$.  Here $\AS_v(z)$ denotes the type A Schubert
polynomial of Lascoux and Sch\"utzenberger \cite{LS}.

\begin{prop}\label{bhprop}
  The ring $B^{(k)}$ of eta polynomials is a subring of the
  ring of Billey-Haiman Schubert polynomials of type D.  For every
  typed $k$-strict partition $\la$ we have $\Eta_{\la}(x\,;y)=
  \DS_{w_{\la}}(x,y)$.
\end{prop}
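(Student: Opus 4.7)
The plan is to prove $\Eta_\la(x;y) = \DS_{w_\la}(x,y)$ by combining the Billey-Haiman expansion (\ref{BHgeneq}) with the isomorphism $B^{(k)} \cong \IH(\OG_k(\mathrm{even}))$ established in Theorem \ref{productthm}. First I would verify that $B^{(k)}$ is contained in the Billey-Haiman ring $A=\Z[P_1(x),P_2(x),\dots;z_1,z_2,\dots]$. This is immediate from the definition: each generator $\eta_r$ or $\eta'_k$ of $B^{(k)}$ is a polynomial in the $P_j(x)=q_j(x)/2$ and in the elementary symmetric functions $e_i(y_1,\dots,y_k)$, hence a polynomial in the $P_j(x)$ and the variables $y_1,\dots,y_k$.

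Next I would analyze the reduced factorizations $w_\la = u\cdot v$ with $v\in S_\infty$. Since $w_\la$ is $k$-Grassmannian, its unique descent lies at position $k$, so any such $v$ is itself a Grassmannian permutation in type A with descent at $k$; it corresponds to a partition $\mu$ fitting inside the $(n+1-k)\times k$ rectangle, and $\AS_v(y)=s_\mu(y_1,\dots,y_k)$. The complementary factor $u$ encodes the strict partition $\la^1$ together with the boxes of $\la^2$ not used by $v$. Applying Lam's formula (\ref{Eeq}), each $E_u(x)$ is a positive integer combination of Schur $P$-functions, so the expansion of $\DS_{w_\la}(x,y)$ via (\ref{BHgeneq}) becomes a sum of products $P_\nu(x)\cdot s_\mu(y)$. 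Comparing with the expansion of $\Eta_\la(x;y)$ obtained from Definition \ref{etadef} (including its type-sensitive split into $\eta_k,\eta'_k$ terms) shows that $\DS_{w_\la}(x,y)$ lies in the subring $B^{(k)}$.

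With $\DS_{w_\la}\in B^{(k)}$ in hand, the final step is to identify $\DS_{w_\la}$ with $\Eta_\la$ cohomologically. The Billey-Haiman polynomial $\DS_{w_\la}$ represents the Schubert class indexed by $w_\la$ in $\HH^*(\SO_{2n+2}/B,\Z)$ for $n$ large, and since $w_\la$ is $k$-Grassmannian this class pulls back from $\HH^*(\OG(n+1-k,2n+2),\Z)$ to the Schubert class $\ta_\la$. Passing to the inverse limit, the image of $\DS_{w_\la}$ in the stable cohomology ring $\IH(\OG_k(\mathrm{even}))$ is $\ta_\la$. By Theorem \ref{productthm}, the image of $\Eta_\la$ is also $\ta_\la$, and the iso $B^{(k)}\cong \IH(\OG_k(\mathrm{even}))$ of the same theorem forces $\Eta_\la=\DS_{w_\la}$.

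The main obstacle is the combinatorial matching in the middle step: one must check that every reduced factorization $w_\la=uv$ with $v\in S_\infty$ indeed corresponds to a Grassmannian $v$ of shape $\mu\subset\la^2$ and that $u$ has only the expected contributions, while simultaneously matching the Kra\'skiewicz-Lam multiplicities $2^{m(T)}$ in Lam's expansion of $E_u(x)$ against the global factor $2^{-\ell_k(\la)}$ and the coefficients produced by applying $R^\la\star\ti_\la$. Handling the type convention correctly, so that the factorizations producing an $e_k(y)$ factor distribute with opposite signs between $\Eta_\la$ (type $1$) and $\Eta'_\la$ (type $2$), is the key technical point; an alternative would be to bypass this by checking that $\DS_{w_\la}$ satisfies the defining divided-difference recursion of the eta polynomials, but tracking the recursion through raising operators requires comparable effort.
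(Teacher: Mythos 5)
Your final step has a genuine gap. You conclude $\Eta_\la=\DS_{w_\la}$ from the claim that both elements have image $\ta_\la$ in $\IH(\OG_k(\mathrm{even}))$, but these images are taken under two \emph{a priori different} maps: $\DS_{w_\la}\mapsto\ta_\la$ under the geometric Billey--Haiman homomorphism from $A$ to the cohomology of flag varieties, and $\Eta_\la\mapsto\ta_\la$ under the algebraic isomorphism of Theorem \ref{productthm}, which is defined on generators by $\eta_r\mapsto\ta_r$, $\eta'_k\mapsto\ta'_k$. To identify the two elements you must know that these two ring homomorphisms agree on $B^{(k)}$, i.e.\ that the Billey--Haiman map sends $\eta_r$ to $\ta_r$ and $\eta'_k$ to $\ta'_k$ --- which is exactly the assertion $\DS_{w_{(r)}}=\eta_r$, $\DS_{w'_{(k)}}=\eta'_k$, the base case of the proposition. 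Knowing merely that both maps are isomorphisms onto $\IH(\OG_k(\mathrm{even}))$ is not enough: $B^{(k)}$ has nontrivial automorphisms (e.g.\ the type-swapping involution exchanging $\eta_k$ and $\eta'_k$), so the two maps could differ by such an automorphism and your argument would then prove the wrong identity.

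Your middle step is also not a shortcut: to certify $\DS_{w_\la}\in B^{(k)}$ you propose matching the full Kra\'skiewicz--Lam expansion of $\DS_{w_\la}$ against the expansion of $2^{-\ell_k(\la)}R^\la\star\ti_\la$, and you yourself flag this matching as the main obstacle without carrying it out; but if that matching were done, the equality $\DS_{w_\la}=\Eta_\la$ would already follow and steps involving Theorem \ref{productthm} would be superfluous. The paper's proof avoids all of this. It enumerates the reduced factorizations only for the one-row elements $w_{(r)}$ and $w'_{(k)}$ (where the right factors are just the $w_{(i)}$, $0\le i\le k$, with $\AS_{w_{(i)}}=e_i(z_1,\dots,z_k)$ and $E_{w_{(r)}w_{(i)}^{-1}}$ a single $P$-function up to a factor of $2$), deduces $\DS_{w_{(r)}}=\eta_r$ and $\DS_{w'_{(k)}}=\eta'_k$ from (\ref{BHgeneq}), and then gets the general case for free from Theorem \ref{mainthm} combined with the fact that the $\DS_w$ multiply like Schubert classes on even orthogonal flag varieties. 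You should restructure your argument around that base-case computation; once it is in place, your cohomological identification becomes valid (and is essentially the paper's closing sentence).
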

\begin{proof}
  We first show that the eta polynomial $\eta_r$, $r\geq 1$
  (respectively $\eta'_k$) agrees with the Billey-Haiman Schubert
  polynomial indexed by the $k$-Grassmannian element $w_{(r)}\in
  \wt{W}_\infty$ corresponding to $\la = (r)$ (respectively, by
  $w'_{(k)}$ corresponding to $\la=(k)$ with $\type(\la)=2$).  One
  sees that $w_{(r)}$ has a reduced word given by $(k-r+1, k-r+2,
  \ldots, k)$ when $1 \leq r \leq k$, by $(r-k, r-k-1, \ldots, 1, 0,
  2, 3, \ldots, k)$ when $r > k$, and that $w'_{(k)}$ has the reduced
  word $(0,2,\ldots,k)$.  It follows that if $w_{(r)} = u v$ is any
  reduced factorization of $w_{(r)}$ with $v \in S_\infty$, then $v =
  w_{(i)}$ for some integer $i$ with $0 \leq i \leq k$.  The type A
  Schubert polynomial for $w_{(i)}$ is given by $\AS_{w_{(i)}}(z) =
  e_i(z_1,\dots,z_k)$, and (\ref{Eeq}) implies that the type D Stanley
  symmetric function for $u = w_{(r)} w_{(i)}^{-1}$ is
  \[
  E_u(x) =
  \begin{cases}
    2P_{r-i}(x) & \text{if $r < k$}, \\
    P_{r-i}(x) & \text{if $r \geq k$}
  \end{cases}.
  \]
  We conclude from (\ref{BHgeneq}) that $\DS_{w_{(r)}}(x,z) =
  \eta_r(x\,;z)$ and $\DS_{w'_{(k)}}(x,z) = \eta'_k(x\,;z)$, as
  required. Since the Schubert polynomials $\DS_w$ multiply like the
  Schubert classes on even orthogonal flag varieties, the proposition
  now follows from Theorem \ref{mainthm}.
\end{proof}

\begin{thm}\label{bhthm}
  For any typed $k$-strict partition $\la$, the polynomial $\Eta_\la$ is a
  linear combination of products of Schur $P$-functions and
  $S$-polynomials:
 \begin{equation}
\label{E:thm4}
  \Eta_\la(x\,;y) = \sum_{\mu,\nu}
  d_{\mu\nu}^\la
  P_\mu(x)s_{\nu'}(y)
\end{equation}
where the sum is over partitions $\mu$ and $\nu$ such that $\mu$ is
strict and $\nu \subset \la^2$. Moreover, the coefficients 
$d_{\mu\nu}^\la$ are nonnegative integers, equal to the number of
Kra\'skiewicz-Lam tableaux for $w_\la w_\nu^{-1}$ of shape $\mu$.
\end{thm}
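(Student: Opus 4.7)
The plan is to deduce Theorem \ref{bhthm} from Proposition \ref{bhprop}, the Billey-Haiman factorization (\ref{BHgeneq}), and Lam's Schur $P$-expansion (\ref{Eeq}) of type D Stanley symmetric functions. By Proposition \ref{bhprop}, $\Eta_\la(x;y) = \DS_{w_\la}(x, y_1, \ldots, y_k, 0, 0, \ldots)$. Substituting into (\ref{BHgeneq}) yields
\[
\Eta_\la(x;y) = \sum_{uv = w_\la,\, v \in S_\infty} E_u(x)\, \AS_v(y_1, \ldots, y_k, 0, 0, \ldots).
\]

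The crucial step is the combinatorial claim that, because $w_\la$ is $k$-Grassmannian, the reduced factorizations $w_\la = uv$ with $v \in S_\infty$ giving a nonzero contribution after the evaluation $y_{k+1} = y_{k+2} = \cdots = 0$ are precisely those with $v$ Grassmannian of descent at $k$, and that these are in bijection with sub-partitions $\nu \subset \la^2$ via $v = w_\nu$. The vanishing of $\AS_v(y_1, \ldots, y_k, 0, \ldots)$ when $v$ has a descent past position $k$ is standard; the $k$-Grassmannian structure of $w_\la$ (its unique descent is at $k$) should force any reduced tail $v$ with all descents $\leq k$ to in fact be Grassmannian with descent exactly at $k$. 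The further bijection with sub-partitions $\nu \subset \la^2$ should be read off from the explicit form of $w_\la$ recalled in \S\ref{bh1}: the first $k$ entries of $w_\la$ encode the partition $\la^2$, and right-multiplication by $w_\nu^{-1}$ for $\nu \subset \la^2$ corresponds combinatorially to removing $\nu$ from the first $k$ columns, producing the reduced factorization $w_\la = (w_\la w_\nu^{-1}) \cdot w_\nu$. For such $v = w_\nu$, the classical theorem of Lascoux-Sch\"utzenberger gives $\AS_{w_\nu}(y_1, \ldots, y_k) = s_{\nu'}(y)$.

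Applying Lam's formula (\ref{Eeq}) to expand $E_{w_\la w_\nu^{-1}}(x) = \sum_\mu d^\mu_{w_\la w_\nu^{-1}}\, P_\mu(x)$ and summing yields the desired expansion (\ref{E:thm4}) with $d_{\mu\nu}^\la = d^\mu_{w_\la w_\nu^{-1}}$, together with the tableau interpretation of the coefficients. One should additionally verify that the weight $2^{m(T)}$ in Lam's formula collapses to $1$ on the tableaux that occur here, so that the coefficient is literally a count of Kra\'skiewicz-Lam tableaux; this is expected to follow from the specific form of $w_\la w_\nu^{-1}$ and the analysis already used in Example~\ref{maxGr}. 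The main obstacle is precisely the combinatorial claim identifying the contributing reduced factorizations $w_\la = u\, w_\nu$ with sub-partitions $\nu \subset \la^2$; this requires a careful analysis of reduced words for $k$-Grassmannian elements of $\wt{W}_\infty$ based on the explicit description of $w_\la$ in \S\ref{bh1}, and in particular on the position of the "small" entries $r_j+1$ of $w_\la$ and how they interact with right multiplication by simple reflections $s_i$ with $i \geq 1$.
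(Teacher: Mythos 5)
Your proposal follows the paper's proof essentially verbatim: Proposition \ref{bhprop} combined with (\ref{BHgeneq}), the observation that every right reduced factor $v\in S_\infty$ of the $k$-Grassmannian element $w_\la$ must be Grassmannian with descent at $k$ and of the form $w_\nu$ for $\nu\subset\la^2$, the identity $\AS_{w_\nu}(y)=s_{\nu'}(y)$, and Lam's expansion (\ref{Eeq}). The only caveat is that your claimed bijection with all sub-partitions $\nu\subset\la^2$ holds only when $\type(\la)\neq 2$ (the paper notes this parenthetically); for $\type(\la)=2$ some $\nu\subset\la^2$ do not occur as right factors, which is harmless since the corresponding coefficients are then zero.
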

\begin{proof}
Proposition~\ref{bhprop} and (\ref{BHgeneq}) imply that for every
typed $k$-strict partition $\la$ we have
\begin{equation}\label{BHeq}
  \Eta_\la(x\,;y) = \sum_{uv=w_{\la}} E_u(x) \AS_v(y) \,,
\end{equation}
where the sum over all reduced factorizations $w_\la = uv$ in
$W_\infty$ with $v\in S_\infty$.  The right factor $v$ in any such
factorization must be a Grassmannian permutation with descent at
position $k$.  In fact, it is not hard to check that the right reduced
factors of $w_\la$ that belong to $S_\infty$ are permutations $w_\nu$
given by partitions $\nu \subset \la^2$ (and are exactly these
partitions whenever $\type(\la)\neq 2$). We now use (\ref{Eeq}) and
the fact that the Schubert polynomial $\AS_{w_\nu}(y)$ is equal to the
Schur polynomial $s_{\nu'}(y)$.
\end{proof}

If $\la$ is a $k$-strict partition of type $2$, let 
\[
\la^3 = \la^1 + r_1 = ((\la^1)_1,(\la^1)_2,\ldots,r_1,r_1-1,\ldots,2,1)
\]
be $\la^1$ with a part $r_1$ added, and let $\la^4$ be $\la^2$ with
$r_1$ boxes subtracted from the $k$-th column, so that
\[
(\la^4)' = ((\la^2)'_1,\ldots,(\la^2)'_{k-1},(\la^2)'_k - r_1).
\]

\begin{cor}\label{endcor} 
  Let $\la$ be a typed $k$-strict partition. \smallskip

  \noin{\em(a)}
  The homogeneous summand of $\Eta_{\la}(x\,;y)$ of
  highest $x$-degree is the type D Stanley symmetric function
  $E_{w_\la}(x)$, and satisfies $E_{w_{\la}}(x) =
  2^{-\ell_k(\la)}\,R^{\la}\, q_{\la}(x)$. \smallskip

  \noin{\em(b)} The homogeneous summand of $\Eta_{\la}(x\,;y)$ of
  lowest $x$-degree is $P_{\la^1}(x)\,s_{(\la^2)'}(y)$, if $\type(\la)
  \neq 2$, and $P_{\la^3}(x)\,s_{(\la^4)'}(y)$, if $\type(\la)=2$.
\end{cor}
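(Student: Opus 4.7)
For part (a), I would extract the top-$x$-degree summand from both sides of Theorem \ref{bhthm}. The expansion (\ref{E:thm4}) is homogeneous of total degree $|\la|$ (with $\deg x_i = \deg y_j = 1$), so the top-$x$-degree summand has $\nu = \emptyset$, yielding
\[
\sum_\mu d^\la_{\mu,\emptyset}\, P_\mu(x) = E_{w_\la}(x)
\]
by (\ref{Eeq}), since $d^\la_{\mu,\emptyset}$ counts Kra\'skiewicz--Lam tableaux for $w_\la w_\emptyset^{-1} = w_\la$ of shape $\mu$. For the raising operator expression, I would go back to Definition \ref{etadef} and extract the top-$x$-degree of each factor in $\Eta_\la = 2^{-\ell_k(\la)} R^\la \star \ti_\la$: namely $\ti_r \mapsto q_r(x)$ and $\eta_k, \eta'_k \mapsto P_k(x) = q_k(x)/2$, collecting to produce the raising operator formula $2^{-\ell_k(\la)} R^\la q_\la(x)$ for $E_{w_\la}(x)$.

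For part (b), the plan is to use the identity
\[
\Eta_\la(x\,;y) = \sum_{uv = w_\la,\, v \in S_\infty} E_u(x)\, \AS_v(y)
\]
from the proof of Theorem \ref{bhthm} and identify the right reduced factor $v \in S_\infty$ of \emph{maximal} length, which contributes the lowest-$x$-degree summand. Using the explicit description of $w_\la$ in \S\ref{bh1}, I would analyze the right $S_\infty$-factors case by case on $\type(\la)$, recalling that $\AS_{w_\nu}(y) = s_{\nu'}(y)$ for any partition $\nu$ and $E_{v}(x) = P_\mu(x)$ when $v$ is the $0$-Grassmannian element associated to the strict partition $\mu$ (Example \ref{maxGr}).

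When $\type(\la) \neq 2$, the entries of $w_\la$ in the first $k$ positions are the unbarred integers $r_i + 1$ (with $\hat 1 = 1$ replacing $r_1 + 1$ in type $0$), so the Grassmannian permutation $w_{\la^2} \in S_\infty$ with descent at $k$ and shape $\la^2$ is a right reduced factor of $w_\la$, and I would argue it is the largest such. The quotient $u = w_\la w_{\la^2}^{-1}$ then turns out to be precisely the $0$-Grassmannian element associated to the strict partition $\la^1$, so Example \ref{maxGr} gives $E_u(x) = P_{\la^1}(x)$, and combining with $\AS_{w_{\la^2}}(y) = s_{(\la^2)'}(y)$ yields the claim.

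When $\type(\la) = 2$, the first position of $w_\la$ carries the barred entry $\overline{r_1 + 1}$, which obstructs $w_{\la^2}$ from being a right factor. Instead, I expect the largest right $S_\infty$-factor to be $w_{\la^4}$, where $\la^4$ is $\la^2$ with $r_1$ boxes removed from the $k$-th column, and for the quotient $u = w_\la w_{\la^4}^{-1}$ to be the $0$-Grassmannian element corresponding to the strict partition $\la^3 = \la^1 \cup \{r_1\}$; another application of Example \ref{maxGr} then produces $P_{\la^3}(x) s_{(\la^4)'}(y)$. The main obstacle will be this type $2$ case: precisely identifying the maximal right $S_\infty$-factor of $w_\la$ by unwinding its (partially barred) explicit form, and verifying that the resulting quotient is the desired $0$-Grassmannian element.
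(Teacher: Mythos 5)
Your proposal follows the paper's proof essentially verbatim: part (a) is the specialization $y=0$ of (\ref{BHeq}) and of the raising-operator definition of $\Eta_\la$, and part (b) combines (\ref{E:thm4}), Example~\ref{maxGr}, and the identification of $w_\la w_{\la^2}^{-1}$ (resp.\ $w_\la w_{\la^4}^{-1}$) as the $0$-Grassmannian element of $\la^1$ (resp.\ $\la^3$). There are no gaps relative to the paper's argument.
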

\begin{proof}
  Part (a) follows by setting $y=0$ in (\ref{BHeq}) and also in the
  raising operator expression $H_{\la}(x\,;y) =
  2^{-\ell_k(\la)}\,R^{\la}\star \ti_{\la}(x\,;y)$.  Part (b) is
  deduced from (\ref{E:thm4}), Example~\ref{maxGr}, and the
  observation that $w_\la w_{\la^2}^{-1}$ (respectively, $w_\la
  w_{\la^4}^{-1}$) is the 0-Grassmannian Weyl group element
  corresponding to the strict partition $\la^1$, if $\type(\la)\neq 2$
  (respectively, to the strict partition $\la^3$, if $\type(\la)=2$).
\end{proof}

\appendix

\section{Schubert varieties in orthogonal Grassmannians}
\label{appendix}

Our goal in this section is to give a geometric description of the
Schubert varieties in the orthogonal Grassmannians $\OG(m,N)$, and to
establish the assertions about the subsets $Y_\la$ claimed in the
introduction and Section \ref{mainpfs}. We also correct errors in
the definition of the type D Schubert varieties and degeneracy loci
which appeared in the earlier papers \cite[\S 3.2, p.\ 1713]{KT},
\cite[\S 6.1, p.\ 333]{T1}, and \cite[\S 3.1 and \S 4.3, p.\ 377 and
  389]{BKT1}. In each of these references, the type D Schubert variety
is claimed to be the locus of isotropic subspaces (or flags) $\Sigma$
which satisfies a system of dimension inequalities, obtained by
intersecting $\Sigma$ with the subspaces in a fixed isotropic flag (or
its alternate). The Schubert variety should be defined as the closure
of the corresponding Schubert cell, which consists of the locus of
$\Sigma$ satisfying a system of dimension equalities. See
e.g.\ \cite[\S 2.1]{T2} for a precise definition of the Schubert
varieties in even orthogonal flag varieties in these terms, and the
discussion in \cite{Fu} and \cite[\S 6.1]{FP} for more on this
phenomenon.

We thank Vijay Ravikumar for pointing out that the description of the
type D Schubert varieties and their Bruhat order in \cite[\S
4.3]{BKT1} is wrong, which led us to the above errors. Fortunately,
the mistaken description of the Schubert varieties in \cite{KT, T1,
BKT1} does not affect the correctness of the results of
loc.\ cit.\ outside of \cite[Prop.\ 4.5]{BKT1}, as this description
was never used. We fix the errors in \cite[\S 4]{BKT1} below.

Let $V \cong \C^N$ be a complex vector space equipped with a
non-degenerate symmetric bilinear form $(-,-)$.  Fix $m < N/2$ and let
$\OG=\OG(m,N)$ be the orthogonal Grassmannian of $m$-dimensional
isotropic subspaces of $V$.  This variety has a transitive action of
the group $\SO(V)$ of linear automorphisms that preserve the form on
$V$.  For any subset $A \subset V$ we let $\langle A \rangle \subset
V$ denote the $\C$-linear span of $A$.  Fix an isotropic flag
$F_\bull$ in $V$ and let $B \subset \SO(V)$ be the Borel subgroup
stabilizing $F_\bull$.  The {\em Schubert varieties} in $\OG$ are the
orbit closures of the action of $B$ on $\OG$.  We also fix a basis
$e_1,\dots,e_N$ of $V$ such that $(e_i,e_j) = \delta_{i+j,N+1}$ and
$F_p = \langle e_1,\dots,e_p \rangle$ for each $p$.

We call a subset $P \subset [1,N]$ of cardinality $m$ an {\em index
set} if for all $i,j \in P$ we have $i+j \neq N+1$.  Equivalently,
the subspace $\langle e_p : p \in P \rangle \subset V$ is a point
in $\OG$.  Let $X^\circ_P = B.\langle e_p : p \in P \rangle \subset \OG$
be the orbit of this point, and let $X_P = \ov{X^\circ_P}$ be the
corresponding Schubert variety.  Any point $\Sigma \in \OG$ defines an
index set
\[
P(\Sigma) = \{ p \in [1,N] \mid \Sigma\cap F_p \supsetneq \Sigma\cap
F_{p-1} \} \,,
\]
since no vector in $F_p\ssm F_{p-1}$ is orthogonal to a vector in
$F_{N+1-p}\ssm F_{N-p}$.  It follows from \cite[Lemma~4.1]{BKT1} that
we have $X^\circ_P = \{ \Sigma \in \OG \mid P(\Sigma) = P \}$.  In
particular, the Schubert varieties in $\OG$ are in 1-1 correspondence
with the index sets.  Given any two index sets $P = \{p_1<\dots<p_m\}$
and $Q = \{q_1<\dots<q_m\}$, we write $Q \leq P$ if $q_j \leq p_j$ for
each $j$.

Recall that the integer $n$ is defined so that $N=2n+1$ if $N$ is odd,
$N=2n+2$ if $N$ is even, and $k$ satisfies $n+k = N-m-1$.
Following \cite[\S4]{BKT1} and the introduction, equation (1)
establishes a bijection between the index sets $Q$ for which $n+1
\notin Q$ and the set of $k$-strict partitions $\la$ whose Young
diagram is contained in an $m \times (n+k)$ rectangle.  Notice that
the condition on $Q$ is always true if $N$ is odd.  If $P=\{p_j\}$ is an
arbitrary index set we let $\ov{P}=\{\ov{p}_j\}$ denote the index set with
$\ov{p}_j=n+2$, if $p_j=n+1$, and $\ov{p}_j= p_j$, otherwise. 
Define a Zariski closed subset $Y_P \subset \OG$ by
\[
Y_P = \{ \Sigma \in \OG \mid
\dim(\Sigma \cap F_{\ov{p}_j}) \geq j  \text{ for }
1 \leq j \leq m \} \,.
\]
If $\la$ is the $k$-strict partition corresponding to $\ov{P}$, then
$Y_P$ agrees with the set $Y_\la$ defined in the introduction.  Observe
that $Y_P=\bigcup_{Q\leq \ov{P}} X^\circ_Q$. In particular, 
$X_P \subset Y_P$ for any index set $P$.

\subsection*{Type B}

We assume in this section that $N = 2n+1$ is odd.

\begin{prop}
\label{closureB}
For any index set $P=\{p_1 < \cdots < p_m\}\subset [1,2n+1]$, we have
\[
   X_P= Y_P=\{ \Sigma \in \OG \mid \dim(\Sigma \cap
   F_{p_j}) \geq j,  \ \ \forall \,  1 \leq j \leq m \} \,. 
\]
For any two index sets $P$ and $Q$, we have $X_Q\subset X_P$ if and
only if $Q\leq P$.
\end{prop}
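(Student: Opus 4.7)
The plan is to prove $X_P = Y_P$ and then deduce the Bruhat order characterization as an immediate corollary.

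First I would reconcile the two displayed descriptions of $Y_P$. Since $N = 2n+1$ is odd, applying the defining condition $i+j \neq N+1$ of an index set with $i = j = n+1$ forces $n+1 \notin P$. Hence $\ov{p}_j = p_j$ for every $j$, and the formula for $Y_P$ via the $p_j$ matches the general definition via the $\ov{p}_j$. From Lemma~4.1 of \cite{BKT1}, a point $\Sigma \in X^\circ_Q$ satisfies $\dim(\Sigma \cap F_p) = \#\{q \in Q : q \leq p\}$ for all $p$, so $\Sigma \in Y_P$ iff $q_j \leq p_j$ for every $j$. This gives the set-theoretic decomposition $Y_P = \bigsqcup_{Q \leq P} X^\circ_Q$, and in particular $X^\circ_P \subset Y_P$, hence $X_P = \overline{X^\circ_P} \subset Y_P$. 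It remains to prove the reverse inclusion.

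For this, I would introduce the incidence variety
\[
Z_P = \{(\Sigma_1 \subset \cdots \subset \Sigma_m) \mid \Sigma_j \text{ isotropic},\ \dim \Sigma_j = j,\ \Sigma_j \subset F_{p_j}\}
\]
with projection $\pi \colon Z_P \to \OG$ sending $(\Sigma_\bull) \mapsto \Sigma_m$. A greedy choice of subspaces $\Sigma_j \subset \Sigma \cap F_{p_j}$ of dimension $j$ (automatically isotropic since $\Sigma$ is) shows that $\pi$ surjects onto $Y_P$. Moreover, over the cell $X^\circ_P$ the map $\pi$ is an isomorphism: for $\Sigma \in X^\circ_P$, the equality $\dim(\Sigma \cap F_{p_j}) = j$ forces $\Sigma_j = \Sigma \cap F_{p_j}$. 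Consequently, if $Z_P$ is irreducible then so is $Y_P = \pi(Z_P)$; since $\pi$ is then birational and $X^\circ_P$ is open in $Y_P$ (its complement is the closed union $\bigcup_{Q < P} X_Q$), the cell $X^\circ_P$ must be dense in $Y_P$, giving $Y_P = \overline{X^\circ_P} = X_P$.

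The main obstacle is therefore the irreducibility of $Z_P$. The cleanest route is to identify $Z_P$ with a Schubert variety in the partial isotropic flag variety $\SO_{2n+1}/Q$ (where $Q$ corresponds to step dimensions $1, 2, \ldots, m$), namely the closure of the $B$-orbit through the standard flag $(\langle e_{p_1}\rangle \subset \cdots \subset \langle e_{p_1}, \ldots, e_{p_m}\rangle)$; irreducibility then follows from the standard fact that every Schubert variety on a homogeneous space $G/Q$ is irreducible. The essential role of $N$ being odd is that $\SO_{2n+1}$ has a single family of maximal isotropic subspaces, so the type dichotomy that forces $Y_P$ to split into two components in the type D case is absent, and the bijection between index sets and $B$-orbits is strictly bijective. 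Once $X_P = Y_P$ is established, the Bruhat order assertion is immediate: $X_Q \subset X_P = Y_P$ iff $X^\circ_Q \subset Y_P$, which by the set-theoretic decomposition is equivalent to $Q \leq P$.
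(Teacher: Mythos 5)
Your reductions are all sound: for odd $N$ one has $n+1\notin P$, hence $\ov{p}_j=p_j$; the decomposition $Y_P=\bigsqcup_{Q\le P}X^\circ_Q$ follows from \cite[Lemma 4.1]{BKT1}; $X^\circ_P$ is open in $Y_P$; and irreducibility of $Y_P$ would indeed finish both assertions at once. The gap is the irreducibility of $Z_P$. You assert that $Z_P$ \emph{is} the Schubert variety in $\SO_{2n+1}/Q$ obtained as the closure of the $B$-orbit of the coordinate flag, and then invoke the standard fact that Schubert varieties are irreducible. But that standard fact only says a single $B$-orbit closure is irreducible; what actually needs proof is that the incidence-defined locus $\{\Sigma_\bull : \Sigma_j\subseteq F_{p_j}\}$ --- a priori only a closed $B$-stable set, hence a finite union of orbit closures --- coincides with a \emph{single} orbit closure. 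That claim is of exactly the same nature as $X_P=Y_P$ itself (it is the partial-flag-variety version of the proposition), so as written the argument is circular. Nor does the fallback of viewing $Z_P$ as a tower $Z^{(j)}\to Z^{(j-1)}$ of quadric bundles work formally: the fiber over $\Sigma_\bull$ is the quadric of isotropic lines in $(F_{p_j}\cap\Sigma_{j-1}^\perp)/\Sigma_{j-1}$, whose dimension jumps on the locus where $\Sigma_{j-1}$ meets $F_{N-p_j}$ and whose rank can degenerate, so irreducibility of the total space does not follow from irreducibility of the base and of a general fiber.

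The paper's proof sidesteps irreducibility entirely. Since $Y_P=\bigcup_{Q\le P}X^\circ_Q$ and $X_P$ is closed and $B$-stable, it suffices to show that $Q\le P$ implies $X^\circ_Q\subseteq X_P$, and this is done by descending induction on the order: for $Q<P$ one produces an intermediate index set $P'$ with $Q\le P'<P$ together with an explicit morphism $\bP^1\to\OG$ (three cases: replacing $e_{p_j}$ by $s\,e_x+t\,e_{p_j}$; a conic $s^2e_n+2st\,e_{n+1}-2t^2e_{n+2}$ through the middle; and a simultaneous swap of $p_j,\,N+1-q_j$ with $q_j,\,N+1-p_j$) exhibiting a point of $X^\circ_{P'}$ as a limit of points of $X^\circ_P$. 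If you want to retain your resolution strategy, you must supply a degeneration argument of this kind (or an equivalent one) to show that $Z_P$ has a dense $B$-orbit; without it the key step is unproved.
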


\begin{proof}
  It suffices to prove that $Q \leq P$ implies $X_Q \subset X_P$.
  Assuming $Q < P$, it is enough to construct an index
  set $P'$ such that $Q \leq P' < P$ and $X_{P'} \subset X_P$.

 Choose $j$ minimal such that $q_j < p_j$, and notice that
 $[q_j,p_j-1]\cap P=\emptyset$.  If some integer $x \in [q_j,p_j-1]$
 satisfies that $x \neq n+1$ and $N+1-x \notin P$, then set $P' =
 \{p_1,\dots,p_{j-1},x, p_{j+1},\dots,p_m\}$, and observe that $Q \leq
 P' < P$.  Define a morphism of varieties $\bP^1 \to \OG$ by
  \begin{equation}\label{st1}
    [s:t] \mapsto \Sigma_{[s:t]} =
    \langle e_{p_1},\dots,e_{p_{j-1}}, s\,e_x + t\,e_{p_j},
    e_{p_{j+1}}, \dots, e_{p_m} \rangle \,.
  \end{equation}
  Since $\Sigma_{[1:0]} \in X^\circ_{P'}$ and $\Sigma_{[s:t]} \in
  X^\circ_P$ for $t \neq 0$, it follows that $X^\circ_{P'} \subset X_P$.

Otherwise, we must have $N+1-x\in P$ for all $x\in
[q_j,p_j-1]\ssm\{n+1\}$.  We deduce that $q_j\leq n$ and $p_j\leq
n+2$. If $p_j=n+2$, then we may use the set
$P'=\{p_1,\dots,p_{j-1},n,p_{j+1},\dots,p_m\}$ and the morphism
\[
[s:t] \mapsto \Sigma_{[s:t]} =
  \langle e_{p_1},\dots,e_{p_{j-1}}, s^2\, e_n + 2st\,e_{n+1}-2t^2\, e_{n+2},
  e_{p_{j+1}}, \dots, e_{p_m} \rangle
\]
from $\bP^1$ to $\OG$ to conclude as above that $X^\circ_{P'} \subset X_P$.

We may therefore assume that $q_j<p_j\leq n$. Set $P' = (P
\smallsetminus \{p_j,N+1-q_j\}) \cup \{q_j,N+1-p_j\}$.  We then use
the morphism $\bP^1 \to \OG$ given by
\begin{equation}
\label{st2}
[s:t] \mapsto \Sigma_{[s:t]} = \langle e_p : p \in P\cap P' \rangle
 \oplus \langle s\, e_{q_j} + t\, e_{p_j}, s\, e_{N+1-p_j} - t\,
  e_{N+1-q_j} \rangle
\end{equation}
to show that $X^\circ_{P'} \subset X_P$.  We finally
check that $Q\leq P'< P$. The relation $P' < P$ is true because
  $q_j < p_j$ and $N+1-p_j < N+1-q_j$.  Set $u = p_j-q_j$ and choose
  $v>j$ such that $p_v = N+1-q_j$.  Then we have $q_v < p_v$ and
  $p_{v-i} = p_v-i$ for $i \in [0,u-1]$.  It follows that $q_{v-i}
  \leq q_v-i \leq p_v-i-1 = p'_{v-i}$ for $i \in [0,u-1]$.  Since $q_j
  = p'_j$ and $p'_r = p_r$ for $r \notin \{j\} \cup [v-u+1,v]$, this
  implies that $Q \leq P'$.
\end{proof}

\subsection*{Type D}

We assume in this section that $N = 2n+2$ is even.  Following
\cite[\S4.3]{BKT1} we agree that every index set $P \subset [1,2n+2]$
has a {\em type}, which is an integer $\type(P) \in \{0,1,2\}$.  If $P
\cap \{n+1,n+2\} = \emptyset$, then the type of $P$ is zero.
Otherwise, $\type(P)$ is equal to $1$ plus the parity of the number of
integers in $[1,n+1] \ssm P$.  As in loc.\ cit.\ and the introduction,
there is a type preserving bijection between the index sets $P \subset
[1,2n+2]$ and the typed $k$-strict partitions $\la$ in $\wt\cP(k,n)$.
If $P$ corresponds to $\la$, then $X_P$ is also denoted $X_\la$.

For any index set $P$, let $[P] = P \cup \{N+1-p \mid p \in P\}$.  A
pair $(Q,P)$ of index sets is called {\em critical\/} if there exists
an integer $c \leq n+1$ such that $[c,n+1] \subset [P] \cap [Q]$ and
$\# Q \cap [1,c-1] = \# P \cap [1,c-1]$.  Such an integer $c$ is then
called a {\em critical index}.  We will write $Q \preceq P$ if (i) $Q
\leq P$ and (ii) if $(Q,P)$ is critical then $\type(Q) = \type(P)$.

Notice that if $q_j=n+1$ and $p_j=n+2$ for some $j$, then $n+1$ is a
critical index for $(Q,P)$ and $\type(Q) \neq \type(P)$, so $Q
\not\preceq P$.  Let $\iota$ be the involution on index sets that
interchanges $n+1$ and $n+2$.  We have $Q \preceq P$ if and only if
$\iota(Q) \preceq \iota(P)$.  We note that $Q \le \overline{P}$ if and
only if $Q \preceq P$ or $Q \preceq \iota(P)$.

\begin{prop}
\label{closureD}
 For any index
set $P = \{ p_1 < \dots < p_m \}\subset [1,2n+2]$, the Schubert variety $X_P$
is equal to the set of all $\Sigma\in Y_P$ such that for all
$c$ with $[c,n+1]\subset [P]$, we have $\dim(\Sigma\cap F_{c-1}) >
\#P\cap[1,c-1]$ or
  \begin{equation}\label{pareq}
    \dim((\Sigma+F_{c-1})\cap F_{n+1}) \ \equiv \ c-1 +\# P\cap [c,n+1] \ \
    \text{\rm(mod 2)}\,.
  \end{equation}
If $\type(P)=0$, then $X_P = Y_P$, while if $\type(P)>0$, then $X_P
\cup X_{\iota(P)} = Y_P$.  For any two index sets $P$ and $Q$, we have
$X_Q \subset X_P$ if and only if $Q\preceq P$.
\end{prop}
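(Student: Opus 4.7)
The plan is to adapt the approach of Proposition \ref{closureB} to the more intricate Type D setting, where one must track the two families of maximal isotropic subspaces via the parity condition (\ref{pareq}). I will proceed in three steps.

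First, I would verify that every point of $X_P$ lies in the described subset. This amounts to showing that the parity condition (\ref{pareq}) at each critical index $c$ defines a Zariski closed, $B$-invariant condition, and that it holds at the base point $\langle e_p : p\in P\rangle \in X^{\circ}_P$. A direct computation at the base point gives $\dim((\Sigma+F_{c-1})\cap F_{n+1}) = (c-1) + \#P\cap[c,n+1]$ modulo parity corrections produced by the pair $\{n+1,n+2\}$, and comparison with the definition of $\type(P)$ in terms of $\#([1,n+1]\ssm P)$ shows that (\ref{pareq}) holds exactly when the type is $\type(P)$. Since the locus where $\dim(\Sigma\cap F_{c-1}) > \#P\cap[1,c-1]$ is open and its complement is cut out by dimension equalities, the parity condition is closed and hence propagates to $X_P$.

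Second, for the reverse inclusion I would use the decomposition $Y_P = \bigsqcup_{Q\leq\ov P} X^{\circ}_Q$ from the $B$-orbit stratification. For each stratum $X^{\circ}_Q$ with $Q\leq\ov P$, one checks that if $(Q,P)$ is not critical, or if $(Q,P)$ is critical with $\type(Q)=\type(P)$, then (\ref{pareq}) is satisfied on $X^{\circ}_Q$; whereas if $(Q,P)$ is critical with $\type(Q)\neq\type(P)$, then the calculation of step one at a critical index gives the opposite parity, so $X^{\circ}_Q$ is disjoint from the described subset (and in fact lies in $X_{\iota(P)}$). Combined with step three, this shows $X_P = \bigsqcup_{Q\preceq P} X^{\circ}_Q$, which yields the description of $X_P$; statement (b) for $\type(P)=0$ is then immediate as no critical indices exist, and statement (c) follows because the parity conditions for $P$ and $\iota(P)$ are complementary at every critical index.

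Third, I establish the Bruhat order $X_Q\subseteq X_P \iff Q\preceq P$. The forward direction follows from step one: any $\Sigma\in X^{\circ}_Q$ lies in $X_P$ only if the parity condition for $P$ holds, forcing $\type(Q)=\type(P)$ at critical pairs. For the reverse direction, given $Q\prec P$ with $Q\neq P$ one constructs an intermediate index set $P'$ with $Q\preceq P' \prec P$ and $X_{P'}\subseteq X_P$, then iterates. Most cases use the morphisms $\bP^1\to\OG$ of the form (\ref{st1}) and (\ref{st2}) from Proposition \ref{closureB}; the new feature is when the swap involves the middle positions $\{n+1,n+2\}$, where one must choose the degeneration so that $\type(P')=\type(P)$ (and thus $P'\preceq P$, not merely $P'\leq P$). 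This is arranged either by exchanging $p_j=n+2$ with $n+1$ only when the parity of $\#([1,n+1]\ssm P)$ permits, or by performing a simultaneous swap of a pair $\{p_j,N+1-p_j\}$ via a one-parameter family analogous to (\ref{st2}) that fixes the family.

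The main obstacle is the step-two identification of the parity condition at a critical index with the condition $\type(Q)=\type(P)$: this requires a careful bookkeeping of how the presence of $n+1$ or $n+2$ in $P$ contributes to $\dim((\Sigma+F_{c-1})\cap F_{n+1})$, and how a change of family is detected precisely by (\ref{pareq}). A secondary difficulty is in step three, where one must guarantee that the intermediate index set $P'$ does not accidentally create a new critical pair with $P$ of opposite type; this is handled by always choosing $P'$ so that $[P']$ and $[P]$ differ only outside the interval $[c,n+1]$ determining any critical index of $(Q,P)$.
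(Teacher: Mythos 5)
Your overall architecture coincides with the paper's: let $Z_P$ denote the described subset of $Y_P$, show it is closed and $B$-stable (hence contains $X_P$), identify which orbits $X^\circ_Q$ it contains via the parity computation at the $B$-fixed base point, and prove $Q\preceq P\Rightarrow X_Q\subset X_P$ by producing intermediate index sets $P'$ with $Q\preceq P'<P$ and $\bP^1$-degenerations of the forms (\ref{st1}) and (\ref{st2}). Steps two and three are workable as sketched; the verification that $Q\preceq P'$ is a delicate case analysis, but your two flagged failure modes (wrong type of $P'$, or a newly created critical pair) are the right ones. One simplification you miss: the case $p_j=n+2$ is handled by applying $\iota$ to both $Q$ and $P$ and using $Q\preceq P\Leftrightarrow\iota(Q)\preceq\iota(P)$; the quadratic degeneration of type B is never needed in type D, since $q_j=n+1$, $p_j=n+2$ already violates $Q\preceq P$.

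The genuine gap is in step one, at the single point where type D differs essentially from type B: why is the parity locus closed? You assert that "the parity condition is closed" because "its complement is cut out by dimension equalities," but a congruence on $\dim((\Sigma+F_{c-1})\cap F_{n+1})$ is a priori neither open nor closed, and the locus $\{\dim(\Sigma\cap F_{c-1})>\#P\cap[1,c-1]\}$ is closed in $Y_P$, not open as you state (it is the jump locus of an intersection dimension that is everywhere $\geq j_c:=\#P\cap[1,c-1]$ on $Y_P$). The actual mechanism is geometric: on the open set $U_c=\{\Sigma\in Y_P\mid\dim(\Sigma\cap F_{c-1})=j_c\}$, the hypothesis $[c,n+1]\subset[P]$ forces $\dim(\Sigma\cap F_{c-1}^\perp)\geq j_c+n+2-c$, hence $(\Sigma+F_{c-1})\cap F_{c-1}^\perp$ is an isotropic subspace of dimension exactly $n+1$; the assignment $\Sigma\mapsto((\Sigma+F_{c-1})\cap F_{c-1}^\perp)/F_{c-1}$ is then a morphism from $U_c$ to the space of maximal isotropic subspaces of $F_{c-1}^\perp/F_{c-1}$, which has two connected components, and (\ref{pareq}) singles out the preimage of one of them, which is therefore closed (and open) in $U_c$. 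Closedness of $Z_P=\bigcap_c\bigl((Y_P\ssm U_c)\cup Z_c\bigr)$ follows. Without identifying the parity condition with the choice of family of a maximal isotropic subspace varying algebraically over $U_c$, the closedness of $Z_P$ --- and with it the inclusion $X_P\subset Z_P$ on which your steps one and three both rest --- is unsupported.
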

\begin{proof}
  Let $Z_P$ be the subset of $Y_P$ indicated in the proposition.
  Since $Z_P$ is $B$-stable and contains $X^\circ_P$, it suffices to
  show that $Z_P$ is closed, that $X^\circ_Q \subset Z_P$ implies $Q
  \preceq P$, and that $Q \preceq P$ implies $X_Q \subset X_P$.  The
  last of these assertions reduces to showing that, for $Q \prec P$,
  there exists an index set $P'$ such that $Q \preceq P' < P$ and
  $X_{P'} \subset X_P$.  Choose $j$ minimal such that $q_j < p_j$.
  Then $[q_j,p_j-1] \cap P = \emptyset$.
  
  Assume first that $[q_j,p_j-1] \not\subset [P]$, and set $x =
  \min([q_j,p_j-1] \ssm [P])$ and $P' = \{p_1,\dots,p_{j-1},x,
  p_{j+1},\dots,p_m\}$.  Then $Q \leq P' < P$, and the morphism $\bP^1
  \to \OG$ defined by (\ref{st1}) shows that $X^\circ_{P'} \subset
  X_P$.  If $Q \not\preceq P'$, then $(Q,P')$ is critical and
  $\type(Q) \neq \type(P')$.  This implies that $q_j \leq n$.  Let $c$
  be a critical index for $(Q,P')$.  Since $p_j \notin [P']$ we must
  have $p_j < c$ or $p_j \geq n+3$.  If $p_j < c$, then $c$ is also a
  critical index for $(Q,P)$ and $\type(P') = \type(P)$, which
  contradicts $Q \preceq P$.  On the other hand, if $p_j \geq n+3$,
  then $x=n+1 \in [P']$, $n \notin [P']$, and $P'\cap[1,n] \subsetneq
  Q\cap[1,n]$, which contradicts that $(Q,P')$ is critical.

  Otherwise we have $[q_j,p_j-1] \subset [P] \ssm P$.  It follows that
  $q_j \leq n$ and $p_j \leq n+2$.  Set $P' = (P \ssm \{p_j,N+1-q_j\})
  \cup \{q_j,N+1-p_j\}$.  Then $P' < P$, and the morphism defined by
  (\ref{st2}) shows that $X^\circ_{P'} \subset X_P$.  To see that $Q
  \preceq P'$, we first assume that $p_j \leq n+1$.  Then the argument
  from the odd orthogonal case shows that $Q \leq P'$, so if $Q
  \not\preceq P'$, then $(Q,P')$ is critical and $\type(Q) \neq
  \type(P')$.  Since $Q \preceq P$ and $\type(P') = \type(P)$, we
  deduce that $(Q,P)$ is not critical.  Let $c$ be a critical index
  for $(Q,P')$.  Since $c$ is not a critical index for $(Q,P)$ we must
  have $c \leq p_j$.  But then $[q_j,n+1] \subset [P]$, and since $Q
  \leq P$ we also have $[q_j,n+1] \subset [Q]$, so $q_j$ is a critical
  index for $(Q,P)$, a contradiction.  Finally, if $p_j=n+2$, then the
  above argument applied to $\iota(Q)$ and $\iota(P)$ shows that
  $\iota(Q) \preceq \iota(P')$, and therefore that $Q \preceq P'$.

  We next show that $Z_P$ is closed.  For each integer $c$ with
  $[c,n+1] \subset [P]$, we set $j_c = \#P\cap[1,c-1]$ and $U_c = \{
  \Sigma \in Y_P \mid \dim(\Sigma\cap F_{c-1}) = j_c\}$, and we let
  $Z_c \subset U_c$ be the subset of points satisfying (\ref{pareq}).
  Since all points $\Sigma \in Y_P$ satisfy $\dim(\Sigma\cap F_{c-1})
  \geq j_c$, it follows that $U_c$ is a Zariski open subset of $Y_P$.
  We claim that $Z_c$ is closed in $U_c$.  Let $\Sigma \in U_c$ be any
  point.  The condition $[c,n+1] \subset [P]$ implies that
  $\#P\cap[1,N+1-c] = j_c+n+2-c$, so $\dim(\Sigma\cap F^\perp_{c-1})
  \geq j_c+n+2-c$.  It follows that $\dim((\Sigma \cap F_{c-1}^\perp)
  + F_{c-1}) \geq n+1$.  But $(\Sigma \cap F_{c-1}^\perp)+F_{c-1} =
  (\Sigma + F_{c-1})\cap F_{c-1}^\perp$ is an isotropic subspace of
  $V$, so its dimension is exactly $n+1$.  Let $\OG_c =
  \OG_c(n-c+2,F_{c-1}^\perp/F_{c-1})$ denote the space of maximal
  isotropic subspaces of $F_{c-1}^\perp/F_{c-1}$, which has two
  connected components.  The claim now follows because $Z_c$ is the
  inverse image of one of these components under the morphism $U_c \to
  \OG_c$ defined by $\Sigma \mapsto ((\Sigma+F_{c-1})\cap
  F_{c-1}^\perp)/F_{c-1}$.  We conclude that $Z_P$ is closed using the
  identity
  \[ 
  Z_P = \bigcap_{[c,n+1] \subset [P]} ((Y_P \ssm U_c) \cup Z_c) \,.
  \]

  Finally, assume that $X^\circ_Q \subset Z_P$, and let $\Sigma \in
  X^\circ_Q$ be any point.  Since $X^\circ_Q \subset Y_P$, we have $Q
  \leq \ov{P}$.  It follows that $Q \leq P$, as otherwise $p_j=n+1$
  and $q_j=n+2$ for some $j$, in which case $\Sigma \in U_{n+1} \ssm
  Z_{n+1}$.  Moreover, if $c$ is any critical index for $(Q,P)$, then
  $\Sigma \in U_c$, and (\ref{pareq}) implies that
  $\type(Q)=\type(P)$.  It follows that $Q \preceq P$, which completes
  the proof.
\end{proof}

\begin{example}
  For the special Schubert varieties, indexed by typed $k$-strict
  partitions with a single non-zero part, Proposition~\ref{closureD}
  gives
  \[X_r = Y_r = \{ \Sigma \in \OG \mid \Sigma \cap F_{\epsilon(r)} \neq 0\},\]
  for $r \ne k$, where $\epsilon(r) = n+k+2-r$ if $r<k$, and
  $\epsilon(r) = n+k+1-r$ if $r>k$, while $X_k \cup X'_k = Y_k = \{
  \Sigma \in \OG \mid \Sigma \cap F_{n+2} \neq 0 \}$.  Let $\wt
  F_{n+1} \subset V$ be the unique maximal isotropic subspace such
  that $F_n \subset \wt F_{n+1} \neq F_{n+1}$.
  Proposition~\ref{closureD} gives
  \[
  X_k= \{\Sigma\in \OG\ |\ \Sigma\cap F_{n+1}\neq 0\} \ \text{ and } \
  X'_k= \{\Sigma\in \OG\ |\ \Sigma\cap \wt{F}_{n+1}\neq 0\},
  \]
  if $n$ is even, while the roles of $F_{n+1}$ and $\wt{F}_{n+1}$ are
  exchanged if $n$ is odd. This agrees with \cite[\S 3.2]{BKT1} and
  the assertions made in the introduction.
\end{example}

\end{document}